\newtheorem{formula}{}
\newtheorem{proposition}[formula]{Proposition}
\newtheorem{corollary}[formula]{Corollary}
\newtheorem{lemma}[formula]{Lemma}
\newtheorem{thm}[formula]{Theorem}
\newtheorem{example}{Example} %[formula]{Example}
\newcommand{\abs}[1]{{\lvert #1 \rvert}}
\renewcommand \b {\mathbf{B}}
\newcommand \ep{\varepsilon}
\renewcommand \l {\lambda}
\newcommand \bn{\mathbf{N}}
\newcommand \ab{$(A, \b)$}
\newcommand \ta{table algebra}
\newcommand \si{\sigma}
\newcommand \mbr{\mathbb{R}}
\DeclareMathOperator{\aut}{Aut}
\DeclareMathOperator{\irr}{Irr}
\DeclareMathOperator{\spa}{Span}
\renewcommand \c{\mathbb C}
\newcommand \cg{$(\c G, G)$}
\newcommand{\R}{\mathbb R}
\begin{document}
\title{Noncommutative reality-based algebras of rank $6$} 

\author{{Allen Herman\thanks{The first author was supported by an NSERC Discovery Grant.}}, {Mikhael Muzychuk\thanks{The second author was supported by the Wilson Endowment of Eastern Kentucky University.}}, and {Bangteng Xu}}

\date{Submitted: August 30, 2016; Revised March 9, 2017 }
\maketitle

\begin{abstract}
We show that noncommutative standard reality-based algebras (RBAs) of dimension $6$ are determined up to exact isomorphism by their character tables.   We show that the possible character tables of these RBAs are determined by seven real numbers, the first four of which are positive and the remaining three real numbers can be arbitrarily chosen up to a single exception.  We show how to obtain a concrete matrix realization of the elements of the RBA-basis from the character table.  Using a computer implementation, we give a list of all noncommutative integral table algebras of rank 6 with orders up to 150.  Four in the list are primitive, but we show these cannot be realized as adjacency algebras of association schemes.  In the last section of the paper we apply our methods to give a precise description of the noncommutative integral table algebras of rank 6 for which the multiplicity of both linear characters is 1.
\end{abstract}

\smallskip
{\small \noindent {\it Key words :} Table algebras, Reality-based algebras, association schemes, character tables.  
\newline {\it AMS Classification:} Primary: 05E30; Secondary: 05E10,16S99.} 

\section{Introduction} 

An {\it algebra with involution} over the complex field 
$\mathbb{C}$ is an algebra $A$ over $\mathbb{C}$ that is equipped with a $\mathbb{C}$-conjugate semilinear involution $*$ that satisfies $(\alpha x)^*= \bar{\alpha}x^*$ and $(xy)^*=y^*x^*$ for all $\alpha \in \mathbb{C}$ and $x,y \in A$.  Here $\bar{\alpha}$ denotes the complex conjugate of $\alpha \in \mathbb{C}$.   A {\it reality-based algebra} (or {\it RBA}) $(A,\mathbf{B})$ consists of a finite-dimensional algebra with involution $A$ over $\mathbb{C}$ that has a distinguished basis $\mathbf{B}= \{b_0, b_1, \dots, b_{r-1}\}$ satisfying the following properties: 
\begin{enumerate}
\item $b_0=1_A$ is the multiplicative identity of $A$; 
\item $\mathbf{B}^2 \subseteq \mathbb{R}\mathbf{B}$, in particular all of the structure constants $\lambda_{ijk}$ generated by the basis $\mathbf{B}$ in the expressions $b_i b_j = \sum\limits_{k=0}^{r-1} \lambda_{ijk}b_k$ are real numbers;
\item $\mathbf{B}^* = \mathbf{B}$, so $*$ induces a transposition on the set $\{0,1,\dots,r-1\}$ given by $b_{i^*}=(b_i)^*$ for all $b_i \in \mathbf{B}$; 
\item $\lambda_{ij0} \ne 0$ if and only if $j = i^*$; and 
\item $\lambda_{ii^*0} = \lambda_{i^*i0} > 0$.
\end{enumerate}  
We will refer to the distinguished basis $\mathbf{B}$ of an RBA $(A,\mathbf{B})$ as an RBA-basis.  The dimension $r$ is called the {\it rank} of the RBA.  

Reality-based algebras were introduced by Blau in \cite{B09} in order to collect several generalizations of group algebras appearing in the literature into a single definition.   Examples of RBAs beyond the elementary group algebra $(\mathbb{C}G,G)$ example that are more-or-less well-known include Brauer's pseudogroups, Kawada's $C$-algebras, table algebras (both commutative and noncommutative), and the Bose-Mesner algebras of finite association schemes (both commutative and noncommutative) (see \cite{AF}, \cite{AFM}, \cite{BI84}, and \cite{Z05}).   Table algebras correspond to RBAs for which all structure constants $\lambda_{ijk}$ relative to the RBA-basis are nonnegative.  A table algebra is realized by an association scheme when the RBA-basis can be realized as the set of adjacency matrices of the relations in an association scheme.  

An RBA $(A, \mathbf{B})$ has a {\it positive degree map} when there is a one-dimensional algebra homomorphism $\delta:A \rightarrow \mathbb{C}$ such that $\delta(a^*)=\overline{\delta(a)}$ for all $a \in A$ and $\delta(b_i)>0$ for all $b_i \in \mathbf{B}$.   When the RBA $(A,\mathbf{B})$ has a positive degree map $\delta$, we will say that the distinguished basis $\mathbf{B}$ is an RBA$^{\delta}$-basis.  An RBA$^{\delta}$-basis $\mathbf{B}$ can be positively rescaled to arrange that $\delta(b_i) = \lambda_{ii^*0}$ for all $b_i \in \b$.   This rescaling is achieved by the unique map $b_i \mapsto \dfrac{\delta(b_i)}{\lambda_{ii^*0}} b_i$ for all $b_i \in \mathbf{B}$.  This unique rescaling is called the {\it standard} RBA$^{\delta}$-basis.  
Two standard RBA$^{\delta}$-bases $\mathbf{B}$ and $\mathbf{D}$ of the algebras with involutions
 $A$ and $C$ are in {\it exact isomorphism} when there is a bijection $\pi: \mathbf{B} \rightarrow \mathbf{D}$ for which the linear extension of $\pi$ is an algebra isomorphism.   To classify the RBA-bases of algebras with involutions that have a fixed one-dimensional algebra representation, one needs to distinguish the standard RBA-bases up to exact isomorphism.  

The main results of this article provide a complete characterization of noncommutative 6-dimensional RBAs which admit a positive degree map. We show that every such algebra is isomorphic (as abstract algebra) to $\mathbb{C}\oplus\mathbb{C}\oplus M_2(\mathbb{C})$ and then we
characterize the standard RBA$^{\delta}$-bases of the algebra $A =
\mathbb{C} \oplus \mathbb{C} \oplus M_2(\mathbb{C})$ up to exact isomorphism.  This is a follow-up to a similar characterization by the authors of standard RBA$^{\delta}$-bases of the noncommutative $5$-dimensional semisimple algebra \cite{HMX}.   In Section $2$ we review the character theory of RBAs with positive degree maps, introduce the standard feasible trace, and give character-theoretic formulas for the centrally primitive idempotents of RBAs with positive degree maps.  In Section 3 we characterize the standard RBA$^{\delta}$-bases of the $6$-dimensional noncommutative semisimple algebra $A$.  We show that the standard RBA$^{\delta}$-bases of $A$ are determined by their character tables.  We show that the standard RBA$^{\delta}$-basis is determined by the values of $\delta$ and the other irreducible linear character $\phi$.  We describe the possibilities for these character values, showing the values of $\delta$ are arbitrarily positive, and the values of $\phi$ are freely determined up to a ratio condition.  We obtain a matrix realization of the standard basis elements.  In Section 4 we use a computer implementation to generate a list of all noncommutative integral standard table algebras of rank 6 up to order 150.   Some of the examples we generate are unexpectedly primitive, so in Section 5 we show these do not arise from association schemes.  In Section 6 we describe all noncommutative table algebras of rank 6 for which both irreducible linear characters have multiplicity 1.

\section{Character theory of RBAs} 

Being a finite-dimensional algebra with involution over $\mathbb{C}$, the algebra $A$ appearing in an RBA is always semisimple (see \cite[Theorem 11.2]{Takesaki}).  Also crucial to our discussion is the existence of a {\it feasible trace}, which is a linear map $\mathfrak{t}: A \rightarrow \mathbb{C}$ for which $\mathfrak{t}(xy)=\mathfrak{t}(yx)$ for all $x,y \in A$.  It is easy to see from the RBA definition that  the map given by $\mathfrak{t}(\sum_i \alpha_i b_i) = \alpha_0$, for all elements $\sum_i \alpha_i b_i$ of $A$ expressed in terms of the distinguished basis $\mathbf{B}$ with coefficients in $\mathbb{C}$, is a feasible trace.   

Now suppose $(A, \mathbf{B})$ is an RBA with a positive degree map $\delta$, and suppose $\mathbf{B}$ is the standard basis.  The positive number $n = \sum_i \delta(b_i)$ is called the {\it order} of the RBA $(A,\mathbf{B})$.  A normalization of our initial feasible trace produces the {\it standard feasible trace} of $(A,\mathbf{B})$, which is given by $\tau( \sum_i \alpha_i b_i) = n\alpha_0$, for all $\sum_i \alpha_i b_i \in A$.   The standard feasible trace induces a sesquilinear form on $A$, given by 
$$\langle x, y \rangle = \tau(x y^*), \mbox{ for all } x,y \in A.$$
(Here ``sesquilinear'' means it is a real bilinear form that is $\mathbb{C}$-linear in the first variable and $\mathbb{C}$-conjugate-linear in the second.)  This form is particularly useful for calculating structure constants in an RBA with positive degree map $\delta$, since when the RBA$^\delta$-basis is standard, we have 
\begin{equation}\label{scs1}
\lambda_{ijk} = \dfrac{1}{n\delta(b_k)} \langle b_ib_j,b_k \rangle, \mbox{ for all } 0 \le i,j,k \le r-1. 
\end{equation}
Since $A$ is finite-dimensional and semisimple, any feasible trace can be expressed as a linear combination of the irreducible characters of $A$ (see \cite[Proposition 5.1]{Hig87}).  Let $\irr(A)$ be the set of irreducible characters of $A$, and let $\tau = \sum_{\chi \in \irr(A)} m_{\chi} \chi$ be the expression of the standard feasible trace as a linear combination of irreducible characters of $A$.  The coefficients $m_{\chi}$ in this expression are called the {\it multiplicities} of the RBA.  It is always the case that $m_{\delta}=1$ \cite[Proposition 2.21]{B09} and $m_{\chi}>0$, for all $\chi \in \irr(A)$ (see \cite[Lemma 2.11]{B09}).

Since $A$ is semisimple, we have that for each $\chi \in \irr(A)$, there is a unique centrally primitive idempotent $e_{\chi}$ of $A$ such that $\chi(e_{\chi}) \ne 0$, and for this idempotent we have that $\chi(e_{\chi})=\chi(b_0)$ is a positive integer.   With respect to our $\mathbb{C}$-conjugate-linear involution, the centrally primitive idempotents satisfy $(e_{\chi})^*=e_{\chi}$, for all $\chi \in \irr(A)$ \cite[Proposition 2.9]{B09}.  In particular this implies the simple components of $A$ are $*$-invariant.  Important for us will be the character-theoretic formula that expresses the centrally primitive idempotents $e_{\chi}$ in terms of the RBA$^{\delta}$-basis $\mathbf{B}$, which was first established in this generality in the work of Higman \cite[\S 5]{Hig87}: for all $\chi \in \irr(A)$, 
\begin{equation}\label{cpis}
e_{\chi} = \displaystyle{ \frac{m_{\chi}}{n} \sum_{i=0}^{r-1} \frac{\chi(b_i^*)}{\lambda_{ii^*0}}b_i}.
\end{equation} 
One can see from this formula that $\chi(b_{i^*}) = \overline{\chi(b_i)}$, for all $b_i \in \mathbf{B}$ and $\chi \in \irr(A)$.   When the RBA$^{\delta}$-basis $\mathbf{B}$ is standard, we have $\delta(b_i) = \lambda_{ii^*0}$, for all $i=0,1,\dots,r-1$.  
 Also important for us will be the orthogonality relations: for all $\psi, \chi \in \irr(A)$, 
$$ \psi(e_{\chi}) = 0 \mbox{ unless } \psi = \chi, \mbox{ and in that case } \psi(e_{\psi}) = \psi(b_0). $$

\section{Noncommutative standard RBA$^{\delta}$-bases of rank $6$}
\label{s-exist}

 Let us assume for the moment 
that $(A,\mathbf{B})$ is a noncommutative $6$-dimensional RBA with positive degree map $\delta$, and that $\mathbf{B}=\{b_0=1_A,b_1,b_2,b_3,b_4,b_5\}$ is its standard RBA$^{\delta}$-basis.  Let $\tau$ be the standard feasible trace, so we have from Section 2 that $\tau(x^*x) \ge 0$ for all $x \in A$.   Let $\irr(A)=\{\delta,\phi,\chi\}$, where $\phi$ is the other linear character of $A$ and $\chi$ is the irreducible character of degree $2$.  Again from Section 2 we have that $\tau = \delta + m_{\phi}\phi + m_{\chi}\chi$ with positive multiplicities $m_{\phi}$ and $m_{\chi}$.   We denote the values of $\delta$, $\phi$, and $\chi$ on $\mathbf{B}$ by $\delta(b_i)=\delta_i$, $\phi(b_i)=\phi_i$, and $\chi(b_i)=\chi_i$ for each $b_i \in \mathbf{B}$, for $i=0,1,\dots,5$.  In particular, $\delta_0=1$, $\phi_0=1$, $\chi_0=2$ and $\delta_1,\dots,\delta_5 > 0$.   The values of $\phi$ on $\mathbf{B}$ are necessarily real as the complex conjugate of
$\phi$ is itself, and the values of $\chi$ on $\mathbf{B}$ must also be real as it is the unique irreducible character of degree $2$.   

Since the real subalgebra $\mathbb{R}\mathbf{B}$ is a semisimple $6$-dimensional noncommutative algebra over $\mathbb{R}$, it has a unique noncommutative simple component, which is isomorphic to either $M_2(\mathbb{R})$ or $\mathbb{H}$ (the algebra of real quaternions).  We begin by excluding the latter case. The transpose of a matrix $M$ is denoted by $M^\top$.

\begin{lemma} \label{splitRba} Let $(A,\mathbf{B})$ be a noncommutative reality-based algebra of rank six with an RBA$^\delta$-basis $\mathbf{B}$. Then the algebra ${\mathbb R}\mathbf{B}$ is isomorphic to ${\mathbb R}\oplus\mathbb{R} \oplus M_2({\mathbb R})$ and, up to a change of basis, ${}^*$ acts on $M_2({\mathbb R})$ as matrix transposition.  In particular, $\mathbf{B}$ has exactly four ${}^*$-invariant elements.
\end{lemma}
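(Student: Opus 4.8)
The plan is to show the quaternion algebra $\mathbb{H}$ cannot occur as the noncommutative simple component of $\mathbb{R}\mathbf{B}$, using the positivity built into the RBA structure. The key obstruction is that in $\mathbb{H}$ the standard involution is the "conjugation" involution $q \mapsto \bar q$ (the only $\mathbb{R}$-algebra involution on $\mathbb{H}$), for which every trace-zero element $q$ satisfies $q^* = -q$, and hence $q q^* = -q^2 = N(q) \ge 0$; but more to the point, the $*$-symmetric elements of $\mathbb{H}$ form only a $1$-dimensional real space (the scalars), whereas in $M_2(\mathbb{R})$ with transposition the symmetric elements form a $3$-dimensional space.

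First I would identify, using formula~\eqref{cpis} and the fact that $(e_\chi)^* = e_\chi$ for all $\chi \in \irr(A)$, that the number of $*$-invariant elements of $\mathbf{B}$ is exactly $\dim_{\mathbb{R}}(\mathbb{R}\mathbf{B})^* $, the dimension of the space of $*$-symmetric elements of the real algebra. Indeed $*$ permutes $\mathbf{B}$ and pairs up the non-symmetric basis elements $b_i \leftrightarrow b_{i^*}$, so the number of symmetric basis elements equals $\dim(\mathbb{R}\mathbf{B})^*$, and this is $1 + 1 + \dim(S)^* $ where $S$ is the noncommutative simple component and $*$ acts on each of the two $1$-dimensional components as the identity (the centrally primitive idempotents are symmetric). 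Now if $S \cong M_2(\mathbb{R})$, then because every $\mathbb{R}$-algebra involution on $M_2(\mathbb{R})$ is, up to conjugation by an invertible matrix, either the transpose or the symplectic involution $M \mapsto J M^\top J^{-1}$; the symplectic case is excluded because its symmetric elements are just the scalars (dimension $1$), which would force $S$ to contain only $1$ symmetric basis vector — but then $\mathbf{B}$ has only $3$ symmetric elements and $3$ basis elements in a single non-symmetric $*$-orbit, impossible since orbits have size $\le 2$. So in the $M_2(\mathbb{R})$ case $*$ must be conjugate to transposition, and the symmetric space has dimension $3$, giving $\mathbf{B}$ exactly $1+1+3-1 = 4$ symmetric elements (subtracting $1$ since $b_0$ is counted once).

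Next I would rule out $S \cong \mathbb{H}$ by a dimension count of the same kind: the unique involution on $\mathbb{H}$ has symmetric space of dimension $1$, so $\mathbb{R}\mathbf{B}$ would have only $1 + 1 + 1 = 3$ symmetric elements and thus $\mathbf{B}$ would split as $3$ symmetric elements plus a single non-symmetric $*$-orbit of size $3$ among $\{b_1,\dots,b_5\}$ — contradicting that $*$-orbits on $\mathbf{B}$ have size $1$ or $2$. Alternatively, and perhaps more robustly, I would argue directly from $\tau(x x^*) \ge 0$: pick $x$ in the simple component $S$; the restriction of $\tau$ to $S$ is a positive multiple $m_\chi$ times the matrix-algebra trace character composed with $*$, and the form $(x,y) \mapsto \tau(xy^*)$ is positive definite on $S$; for $M_2(\mathbb{R})$ with transpose this is the Frobenius form, fine, but one must check the $\mathbb{H}$ case is also consistent — so the cleanest route really is the symmetric-element count above, which is unambiguous.

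The main obstacle, and the step to be careful with, is the bookkeeping that translates "$*$ fixes the centrally primitive idempotents and permutes $\mathbf{B}$" into "the number of $*$-fixed basis elements equals $\dim_{\mathbb{R}}$ of the symmetric subalgebra": one must verify that $*$ acts on $\mathbb{R}\mathbf{B}$ compatibly with the simple-component decomposition (which follows from $(e_\chi)^* = e_\chi$, already noted in Section~2), and that the permutation action of $*$ on $\mathbf{B}$ has no fixed points outside the symmetric part and exactly the right number inside it. Once this is set up, the classification of $\mathbb{R}$-algebra involutions on $M_2(\mathbb{R})$ and the triviality of the symmetric space in the symplectic and quaternionic cases finish the argument, yielding both the isomorphism type $\mathbb{R} \oplus \mathbb{R} \oplus M_2(\mathbb{R})$ with transpose involution and the count of exactly four $*$-invariant basis elements.
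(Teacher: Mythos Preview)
Your dimension-counting strategy has a genuine gap: the classification of involutions you rely on is incomplete, and the cases you miss cannot be excluded by counting alone.

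First, a bookkeeping correction. If $k$ is the number of $*$-fixed elements of $\mathbf{B}$, then the $*$-symmetric subspace of $\mathbb{R}\mathbf{B}$ has dimension $(k+6)/2$, not $k$; for $k=4$ this is $5$, and for $k=2$ it is $4$. Your formula ``number of symmetric basis elements $=\dim(\mathbb{R}\mathbf{B})^*$'' is off, and the ad hoc ``$1+1+3-1$'' only lands on $4$ by accident.

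More seriously, $\mathbb{H}$ does \emph{not} have a unique $\mathbb{R}$-algebra involution. Every involution is $q\mapsto a\bar q a^{-1}$ with $a^2\in\mathbb{R}$, so $a$ is either real (giving the standard conjugation, with $1$-dimensional symmetric part) or purely imaginary. For $a=i$ one computes that the involution fixes $1,j,k$ and negates $i$, so the symmetric part is $3$-dimensional. Likewise, over $\mathbb{R}$ the algebra $M_2(\mathbb{R})$ carries \emph{three} inequivalent involutions, not two: besides the transpose and the symplectic involution there is the indefinite orthogonal involution $M\mapsto DM^\top D^{-1}$ with $D=\mathrm{Diag}(1,-1)$, whose symmetric part is also $3$-dimensional. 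In both of these ``bad'' cases the total symmetric subspace of $\mathbb{R}\mathbf{B}$ has dimension $1+1+3=5$, giving $k=4$ --- exactly the same count as the transpose case --- so your parity/orbit-size argument cannot separate them.

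The positivity condition $\tau(xx^*)\ge 0$, which you set aside as less clean, is precisely what is needed. For $\mathbb{H}$ with the orthogonal-type involution, the fixed element $q=j$ satisfies $qq^*=j^2=-1_{\mathbb{H}}$, hence $\tau(qq^*)=m_\chi\chi(-1_{\mathbb{H}})=-2m_\chi<0$. For $M_2(\mathbb{R})$ with the indefinite involution, the matrix $X=\begin{bmatrix}0&1\\1&0\end{bmatrix}$ gives $\mathrm{tr}(XDX^\top D^{-1})=-2<0$. The paper's proof uses exactly these positivity computations; the symmetric-element count alone is not ``unambiguous'' and does not finish the argument.
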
  

\begin{proof}
Since $A$ is non-commutative, the basis $\mathbf{B}$ contains at least one pair $b_i,b_i^*$ of non-symmetric elements. Therefore the number of ${}^*$-fixed elements of ${\mathbf B}$ is either $2$ or $4$. In the first case the dimension of ${}^*$-invariant subspace of $\mathbb{R}\mathbf{B}$ is $4$ while in the second one it is equal to $5$.  
Suppose first that $\mathbb{R}\mathbf{B}\cong {\mathbb R}\oplus\mathbb{R}\oplus \mathbb{H}$.   The noncommutative simple component of $\mathbb{R}\mathbf{B}$ is ${}^*$-invariant, and in particular $1_{\mathbb{H}}^*=1_{\mathbb{H}}$.  Since the dimension of the ${}^*$-invariant subspace of $\mathbb{R}\mathbf{B}$ is at least $4$, there exists a purely imaginary unit quaternion $q \in \mathbb{H}$ such that $q^*= q$.   On the other hand, since $q$ is purely imaginary, $q  = -\bar{q}$.  Therefore for this $q$ we have $q^*q = - 1_{\mathbb{H}}$.  But then $\tau(q^*q)=m_{\chi}\chi(q^*q) = - m_{\chi} \chi(1_{\mathbb{H}}) = - m_{\chi} \chi(1) < 0$, so this is a contradiction.   This excludes the case of $\mathbb{R}\mathbf{B}\cong\mathbb{R}\oplus \mathbb{R}\oplus\mathbb{H}$, so we must have that $\mathbb{R}\mathbf{B} \simeq \mathbb{R}\oplus\mathbb{R} \oplus M_2(\mathbb{R})$.  

Let $\Delta:{\mathbb R}\mathbf{B}\rightarrow M_2({\mathbb R})$ be the two-dimensional irreducible representation of ${\mathbb R}\mathbf{B}$ given by projection to the component $M_2({\mathbb R})$, which affords the character $\chi$. 
We have that $\Delta(x^*)^\top$ is a $2$-dimensional irreducible representation equivalent to $\Delta$.  Thus there exists an $S \in GL_2({\mathbb R})$ such that $\Delta(x^*)^\top = S^{-1} \Delta(x) S$. Equivalently, $\Delta(x^*) = S^\top \Delta(x)^\top (S^{-1})^\top$. Substituting $x^*$ for  $x$ in 
$\Delta(x^*) = S^\top \Delta(x)^\top (S^{-1})^\top$, we obtain that 
$$
\Delta(x) = S^\top \Delta(x^*)^\top (S^{-1})^\top =
(S^\top S^{-1}) \Delta(x) (S^\top S^{-1} )^{-1},
$$
 for each $x\in {\mathbb R}\mathbf{B}$. Combining this together with $\Delta({\mathbb R}\mathbf{B}) = M_2({\mathbb R})$ we obtain that $S^{-1}S^\top = \alpha I_2$ for some $\alpha \in {\mathbb R}$. It follows from $S^\top =\alpha S$ and  $(S^\top)^\top = S$ that $\alpha =\pm 1$, i.e.~$S$ is either symmetric or antisymmetric.

Assume first that $S$ is antisymmetric, that is $S = \begin{bmatrix} 0 & a \\ -a & 0 \end{bmatrix}$. A direct check shows that in this case the map $X\mapsto S X^\top S^{-1}, X\in M_2({\mathbb R})$ has a one-dimensional space of fixed points. So, in this case the dimension of the ${}^*$-invariant subspace of $\mathbb{R}\mathbf{B}$ is three, a contradiction.

Assume now that $S$ is symmetric. Then $S=P^\top D P$ for some $D \in \{I_2, Diag(1,-1),-I_2\}$ and 
$P\in GL_2({\mathbb R})$.  Replacing $\Delta(x)$ by the equivalent representation $\Sigma(x):=(P^{-1})^\top \Delta(x) P^\top$ we obtain  $\Sigma(x^*)= D \Sigma(x)^\top D^{-1}$.  If $D=\pm I_2$, then we are done.  It remains to deny the case of $D=Diag(1,-1)$.  Consider the standard feasible trace $\tau(x)$ for $x\in \mathbb{R}\mathbf{B}$.  Since $\tau(x)=\delta(x)+m_{\phi}\phi(x) + m_{\chi}\chi(x)$ for all $x \in A$, and $\delta(e_{\chi}x)=\phi(e_{\chi}x)=0$ for all $x \in A$, we must have that for all $x \in A$, 
$$ \chi(xx^*) = \chi((e_{\chi}x)(e_{\chi}x)^*) = \frac{1}{m_{\chi}}(\tau((e_{\chi}x)(e_{\chi}x)^*) \ge 0.$$ 
In particular, $tr(\Delta(x)\Delta(x^*)) = \chi(xx^*) \geq 0$ for all $x\in {\mathbb R}\mathbf{B}$.  Since $\Sigma:{\mathbb R}\mathbf{B}\rightarrow M_2({\mathbb R})$ is an epimorphism, we conclude that $tr(X D X^\top D^{-1})\geq 0$ holds for all $X\in M_2(\mathbb{R})$. Now choosing $X=\begin{bmatrix} 0 & 1 \\ 1 & 0 \end{bmatrix}$ we get a contradiction.
\end{proof} 

By Lemma \ref{splitRba}, we can replace $A$ by an isomorphic image $\mathbb{R}\oplus \mathbb{R} \oplus M_2(\mathbb{R})$. Any  standardized RBA$^{\delta}$-basis of $A$ is of the form 
\begin{equation}\label{eq080317}
\begin{array}{rcl} 
\mathbf{B} &=& \{b_0=(1,1,B_0), b_1=(\delta_1,\phi_1,B_1)=b_1^*, b_2=(\delta_2,\phi_2, B_2)=b_2^*, \\
& & \quad b_3 = (\delta_3,\phi_3,B_3)=b_3^*, b_4 = (\delta_4,\phi_4,B_4), b_5 = b_4^*=(\delta_4,\phi_4,B_4^{\top}) \},
\end{array}  
\end{equation}
with $B_0 = I_2$, $B_1$, $B_2$, $B_3$, and $B_4$ all being $2 \times 2$ real matrices.  Here $\delta_i = \delta(b_i)$ for $i = 0, \dots, 4$, where $\delta$ is the positive degree map, and the $\phi_i = \phi(b_i)$ for $i = 0, \dots, 4$ are the values of the other linear character $\phi$ of $A$ on $\mathbf{B}$.  If $\chi$ is the irreducible character of $A$ with degree $2$, then its values on $\mathbf{B}$ are $\chi_i:=\chi(b_i) = tr(B_i)$ for $i=1,2,3,4$, and the map $\Delta$
given by $\Delta(b_i)= B_i$ for $i=0,\dots,4$ and $\Delta(b_5)=B_4^{\top}$ defines a real representation of $A$ that affords $\chi$.

The goal of this section is to give necessary and sufficient conditions for two lists of real numbers $[\delta, \phi] = [(\delta_1,\dots,\delta_4),(\phi_1,\dots,\phi_4)]$ to be the values of the degree map and the other linear character of a noncommutative rank $6$ RBA.   We will show that, under these conditions, the two lists of values $[\delta, \phi] = [(\delta_1,\dots,\delta_4),(\phi_1,\dots,\phi_4)]$ completely determine the RBA up to a basis permutation and a possible conjugation of the $2 \times 2$ real matrices $B_i$ by an orthogonal matrix. 

Recall that the standard feasible trace $\tau$ of $A$ induces the sesquilinear form $\langle x , y \rangle = \tau(xy^*)$, for all $x,y \in A$.  Since 
$\tau = \delta + m_{\phi}\phi + m_{\chi}\chi$, this form can be expressed as 
\begin{equation}
\label{e-form}
\langle x , y \rangle = \tau(xy^*) = \delta(x)\delta(y^*) + m_{\phi}\phi(x) \phi(y^*) + m_{\chi}\chi(xy^*).
\end{equation}
Here $\chi(xy^*)$ agrees with the usual trace bilinear form on real $2 \times 2$ matrices. That is, 
if $X$ and $Y$ are the images of $x$ and $y$ under a representation affording $\chi$, then $\chi(xy^*) = tr(X \overline{Y}^{\top})$.   

By \cite[Formula (7.4)]{Hig87} the feasible multiplicity $m_\psi$ of an irreducible character $\psi$ 
may be computed via $\sum_{i}\frac{|\psi(b_i)|^2}{\delta_i} = \psi(1)\frac{n}{m_\psi}$.
Therefore, $m_\delta=1, \sum_{i}\frac{\phi_i^2}{\delta_i} = \frac{n}{m_\phi}, 
\sum_{i}\frac{\chi_i^2}{\delta_i} = 2\frac{n}{m_\chi}$. It follows from $n=\tau(b_0)$ that $n=1+m_\phi+2m_\chi.$

Let $\mathbf{E}=\{e_0,...,e_5\}$ denote a ``standard" basis of the algebra $\R\oplus\R\oplus M_2(\R)$:
$$
\begin{array}{rclrclrcl}
e_0 & = & (1,0,O_2), \qquad & e_1 & = & (0,1,O_2),\qquad & e_2 & = & (0,0,E_{11}), \\
e_3 & = & (0,0,E_{22}), & e_4 & = & (0,0,E_{12}), \mbox{ and }& e_5 & = & (0,0,E_{21}),
\end{array}
$$
where $E_{ij}$ are the standard matrix units of $M_2(\R)$.   It follows from the definition of $\mathbf{E}$ that each of the numbers $\delta(e_ie_j^*),\phi(e_i e_j^*),\chi(e_i e_j^*)$ is zero whenever $i\neq j$. Therefore $\mathbf{E}$ is an orthogonal basis with respect to the form $\langle \ , \ \rangle$
defined in \eqref{e-form}. 
 Notice that $\b$ is an orthogonal basis with respect to the same form.

Starting from now we assume that $\b=\{b_0,...,b_5\}$ is an arbitrary basis of the algebra $A=\mathbb{R}\oplus \mathbb{R}\oplus M_2(\mathbb{R})$ which has form~\eqref{eq080317}.  In what follows we denote by $P(\b)$ the transition matrix between the bases $\mathbf{E}$ and $\b$. That is, the $j$-th column of $P(\b)$ 
consists of the coordinates of $b_j$ in the basis $\mathbf{E}$:
$$
b_j = \sum_{i=0}^5 P(\b)_{ij} e_i, \quad
\hbox{where $P(\b)_{ij}$ is the $(i, j)$-entry of } P(\b).
$$ 
Following the usual notation (for example, see \cite{BI84}),
 we denote  by $Q(\b)$ the matrix that satisfies $P(\b)Q(\b) = n I_6$,
 where $n =\sum_{i}\delta_i \ne 0$. Thus
$$
e_j = n^{-1}\sum_{i=0}^5 Q(\b)_{ij} b_i.
$$ 
If $\b$ is a standardized RBA${}^{\delta}$-basis, then it is an orthogonal basis with respect to the bilinear form $\langle \ , \ \rangle$ defined in
\eqref{e-form}.  Computing $\langle b_i,e_j\rangle$ in two ways we obtain the following formula 
\begin{equation}
\frac{Q(\b)_{ij}}{m_j} = \frac{P(\b)_{ji}}{\delta_i}
\end{equation}\label{eq:PQ}
where $m_i:=\langle e_i,e_i\rangle$.

Let $\si: =(\si_0, \si_1, \dotsc, \si_5) \in \mbr^6$, where each $\si_i >
0$. For $u = (u_0, u_1, \dotsc, u_5), v = (v_0, v_1, \dotsc, v_5) \in
\mbr^6$, define
\begin{equation}
\label{e-inner}
(u, v)_\si := \sum_{i} u_i v_i \si_i^{-1}.
\end{equation}
Then $( \ , \ )_\si$ is an inner product on $\mbr^6$.

{\bf Remark.} Although the ``only if'' part of the theorem below may be directly deduced from Higman's paper~\cite{Hig87}, we prefer to give a direct proof here to make the paper self-contained.

\begin{thm}\label{010616a}
Let $\b:=\{b_i = (\delta_i,\phi_i,B_i)\,|\,i=0,...,5\}$ be an arbitrary basis of the algebra 
$A:=\mathbb{R}\oplus \mathbb{R} \oplus M_2(\mathbb{R})$, and 
 $P(\b)$ the transition matrix between the bases $\mathbf{E}$ and $\b$. 
Then $\b$ is an RBA${}^\delta$-basis of $A$ with respect to the involution $(x,y,Z)^* = (x,y,Z^\top)$ and the degree map $\delta((x,y,Z))=x$ if and only if the following conditions are satisfied (up to the renumbering of the elements of $\b$).
\begin{enumerate}
\item $\delta_i > 0,i=0,...,5$ and $\delta_0=\phi_0=1,B_0 = I_2$.

\item 
$ \phi_5=\phi_4, \delta_5=\delta_4$, $B_i^T=B_i,i=1,2,3$, and $B_4^T = B_5$.

\item 
Let $n:=\sum_{i}  \delta_i, \  m_\delta =1, \  m_\phi = \frac{n}{\sum_{i} (\phi_i^2/\delta_i)}$, 
and $m_\chi = \frac{n-1-m_\phi}{2}$. Then 
$$
m_\chi > 0 \quad \hbox{and} \quad
\left( P(\b)_i, P(\b)_j \right)_\delta = \frac{n}{m_i} \delta_{ij}, \quad 0 \le i, j \le 5, 
$$
where $P(\b)_i$ is the $i$-th row of $P(\b)$, $0 \le i \le 5$, 
$\delta = (\delta_0, \delta_1, \dotsc, \delta_5)$ (by abuse of the notation), 
$\delta_{ij}$ is the Kronecker delta, and $m_0 = m_\delta, m_1 = m_\phi$, 
$m_2 = \dotsb = m_5 = m_\chi$.  
\end{enumerate}
\end{thm}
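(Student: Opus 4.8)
The plan is to prove the equivalence by carefully unwinding what it means for $\mathbf{B}$ to be an RBA$^\delta$-basis of $A = \mathbb{R}\oplus\mathbb{R}\oplus M_2(\mathbb{R})$ with the prescribed involution and degree map, and to match the resulting conditions one-to-one with (i)--(iii). For the ``only if'' direction, I would start from the axioms of an RBA$^\delta$-basis together with the standardness assumption. Axiom (1) forces $b_0 = 1_A = (1,1,I_2)$, giving $\delta_0 = \phi_0 = 1$ and $B_0 = I_2$; positivity of $\delta$ on the basis gives $\delta_i > 0$, which is (i). By Lemma~\ref{splitRba}, up to a change of basis the involution acts as transposition on $M_2(\mathbb{R})$; since $*$ fixes exactly four elements of $\mathbf{B}$ and the involution acts trivially on the two $1$-dimensional components, the two non-$*$-fixed elements form a pair, which after renumbering we call $b_4, b_5 = b_4^*$. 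Applying $*$ componentwise then yields $\delta_5 = \delta_4$, $\phi_5 = \phi_4$, $B_5 = B_4^\top$, and $B_i^\top = B_i$ for $i = 1,2,3$, which is (ii). For (iii): the multiplicity formula $m_\psi = n/(\sum_i |\psi(b_i)|^2/\delta_i)\cdot\psi(1)$ quoted from Higman (Formula (7.4)) in the excerpt gives $m_\delta = 1$ and the stated formula for $m_\phi$; then $n = 1 + m_\phi + 2m_\chi$ (obtained from $n = \tau(b_0)$ as in the excerpt) gives $m_\chi = (n-1-m_\phi)/2$, and $m_\chi > 0$ because all multiplicities are positive. The orthogonality relations $\langle P(\mathbf{B})_i, P(\mathbf{B})_j\rangle_\delta = (n/m_i)\delta_{ij}$ then come from the fact that $\mathbf{E}$ is an orthogonal basis for $\langle\ ,\ \rangle$ with $\langle e_i, e_i\rangle = m_i$, combined with the standard change-of-basis relation \eqref{eq:PQ} between $P(\mathbf{B})$ and $Q(\mathbf{B})$: the rows of $P(\mathbf{B})$ rescaled by $\delta$ are, up to the factor $1/m_i$, the columns of $Q(\mathbf{B})$, and the orthogonality of $\mathbf{B}$ under $\langle\ ,\ \rangle$ translates precisely into the displayed relation.

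For the ``if'' direction, I would assume (i)--(iii) and verify each RBA axiom in turn. Axiom (1) is immediate from (i). Axioms (3) and (5) ($\mathbf{B}^* = \mathbf{B}$ and $\lambda_{ii^*0} > 0$) follow from (ii) together with (i): the involution $(x,y,Z)^* = (x,y,Z^\top)$ permutes the basis as $b_i \leftrightarrow b_i$ for $i \le 3$ and $b_4 \leftrightarrow b_5$, and $\lambda_{ii^*0}$ will be computed below to equal $\delta_i > 0$. The substance is Axiom (2) (reality and, implicitly, that $b_0$-coefficient equals $\delta$-value so the degree map is multiplicative with $\delta(b_i) = \lambda_{ii^*0}$) and Axiom (4) ($\lambda_{ij0} \ne 0$ iff $j = i^*$). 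The key computational device is formula \eqref{scs1}: once we know $\tau = \delta + m_\phi\phi + m_\chi\chi$ is the standard feasible trace — which is exactly what the multiplicity conditions in (iii) encode — we get $\lambda_{ijk} = \frac{1}{n\delta_k}\langle b_i b_j, b_k\rangle$, and all these structure constants are real because $\langle\ ,\ \rangle$ is a real form on $\mathbb{R}\mathbf{B}$ and the $\delta_k$ are real. The orthogonality relations in (iii) are what guarantee that this $\tau$ really is the standard feasible trace for this basis: reading \eqref{eq:PQ} backwards, the relations $(P(\mathbf{B})_i, P(\mathbf{B})_j)_\delta = (n/m_i)\delta_{ij}$ say that the basis $\mathbf{B}$ is orthogonal with respect to the form built from $\delta + m_\phi\phi + m_\chi\chi$ and that $\langle b_0, b_0\rangle = n$, hence $\tau(b_0) = n = \sum_i \delta_i$, so the normalization is correct and $\tau$ is indeed a feasible trace with the right multiplicities. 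Axiom (4) then follows since $\langle b_i b_j, b_0\rangle = \tau(b_i b_j) = \lambda_{ij0}\, n$ is, by the orthogonality of $\mathbf{B}$ and the computation $\langle b_i, b_j^*\rangle = \delta_{i j^*}\cdot(\text{positive})$, nonzero exactly when $j = i^*$; and $\lambda_{ii^*0} = \langle b_i, b_i\rangle/n = \delta_i$ matches the degree map, so the degree map is forced to be $\delta((x,y,Z)) = x$ and is multiplicative as a projection onto a $1$-dimensional component.

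The main obstacle I anticipate is the careful bookkeeping in the ``if'' direction to confirm that the three numerical conditions in (iii) are not merely necessary but genuinely sufficient to pin down the standard feasible trace — in particular, showing that orthogonality of the rows of $P(\mathbf{B})$ in the $\delta$-inner product, together with the prescribed diagonal values $n/m_i$, forces the linear functional $\tau$ defined via the multiplicities to satisfy $\tau(\sum_i\alpha_i b_i) = n\alpha_0$ and to be a feasible (trace) form. This requires translating between the ``row'' picture of $P(\mathbf{B})$ (coordinates of the $b_j$ in terms of $\mathbf{E}$) and the ``column'' picture of $Q(\mathbf{B})$ via $P(\mathbf{B})Q(\mathbf{B}) = nI_6$, and checking that the centrally primitive idempotent formula \eqref{cpis} is consistent — i.e., that with these multiplicities the $e_\chi$ expanded in $\mathbf{B}$ via \eqref{cpis} really are idempotent. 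A clean way to sidestep the messiest part is to observe that $\mathbf{E}$ is already known to be orthogonal with $\langle e_i, e_i\rangle = m_i$ for the \emph{abstract} form on $A$, so the only thing (iii) adds is that $\mathbf{B}$ is \emph{also} orthogonal for the \emph{same} form with the correct normalization; feasibility of $\tau$ (the trace property $\tau(xy) = \tau(yx)$) is automatic since $\tau$ is a combination of characters, and positivity $\tau(x^*x) \ge 0$ follows from $m_\phi, m_\chi > 0$ and $\delta$ being a genuine one-dimensional representation. Once that identification is in place, \eqref{scs1} delivers all remaining axioms by direct (routine) computation.
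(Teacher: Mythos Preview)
Your proposal is correct and follows essentially the same route as the paper: the ``only if'' direction matches almost verbatim, and for the ``if'' direction both arguments reduce to converting the row orthogonality in (iii) into the column identity $P^\top M P = n\Delta$ (equivalently, orthogonality of $\mathbf{B}$ under the form built from $\tau=\delta+m_\phi\phi+m_\chi\chi$) and then reading off $\lambda_{ij^*0}=\delta_i\delta_{ij}$. The only cosmetic difference is that the paper expands $b_ib_j^*$ explicitly in the $\mathbf{E}$-basis to reach this conclusion, whereas you package the same computation inside the abstract form $\langle\,\cdot\,,\,\cdot\,\rangle$; one small caution is that the labeled formulas \eqref{eq:PQ} and \eqref{scs1} you cite are proved in the paper \emph{assuming} $\mathbf{B}$ is already an RBA$^\delta$-basis, so in the ``if'' direction you should (as you indicate in your final paragraph) derive what you need directly from $P^\top M P = n\Delta$ rather than quote those formulas.
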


\begin{proof}  First, suppose $\b$ is an RBA${}^\delta$-basis  of $A$ with  $b_0 = 1_A$. Since  the degree map $\delta$ of $A$ is the projection on the first coordinate, the numbers $\delta_i$ are  positive. Also $b_0=1_A$ implies $\delta_0=\phi_0 =1, B_0=I_2$. Thus {\it (i)} holds. 

It follows from Lemma~\ref{splitRba} that $\b$ contains only two non-symmetric elements. Without loss of generality we may assume that they are $b_4$ and $b_5$. Thus $b_1^* = b_1,b_2^*=b_2,b_3^*=b_3,b_4^*=b_5,b_5^*=b_4$, and {\it (ii)} holds.

Since $\mathbf{E}$ is an orthogonal basis, we can write 
$$
\delta_{\ell k} m_k = \langle e_\ell,e_k\rangle = n^{-2}\sum_{i=0}^5 Q(\b)_{i\ell} Q(\b)_{ik}\langle b_i,b_i\rangle. 
$$
Using \eqref{eq:PQ} we obtain that
$$ 
\delta_{\ell k} m_k =
n^{-2}\sum_{i=0}^5 \frac{m_\ell}{\delta_i} P(\b)_{\ell i} \frac{m_k}{\delta_i} P(\b)_{k i}(n\delta_i).
$$
Therefore,
$$
  m_\ell \sum_{i=0}^5  P(\b)_{\ell i} P(\b)_{k i}\delta_i^{-1}= n \delta_{\ell k}.
$$
That is, 
$$
(P(\b)_\ell, P(\b)_k)_\delta = \frac{n}{m_\ell} \delta_{\ell k}, \quad 0 \le \ell, k \le 5.
$$
Note that $\tau = m_\delta \delta + m_\phi \phi + m_\chi \chi$, and $m_\chi > 0$ 
(cf. \cite{Hig87}). 
It follows from $e_0=e_\delta$ that $m_0=\langle e_\delta,e_\delta\rangle = \tau(e_\delta)=1=m_\delta$.  The equality $e_1=e_\phi$ implies that  $m_1=\langle e_\phi,e_\phi\rangle = \tau(e_\phi)  = m_\phi$.  For each $2\leq i\leq 5$ we have $m_i = \langle e_i,e_i\rangle = \tau(e_ie_i^*)=m_\chi \chi(e_i e_i^*)=m_\chi$.  This proves (iii).

Assume now that the basis $\b$ satisfies the assumptions (i)-(iii) of the theorem. 
Parts (i)-(ii) of the assumptions imply that the basis $\b$ satisfies the first three axioms of an RBA$^\delta$-basis.  The third assumption implies that  $P:=P(\b)$ satisfies the matrix equation $M P \Delta^{-1} P^T = nI_6$, where $\Delta$ and $M$ are diagonal matrices defined via $\Delta_{ii}=\delta_i, M_{ii}=m_i$. 
Therefore $P^T M P \Delta^{-1} = n I_6$, and consequently,  $P^T M P = n \Delta$.
Let ${P_i}^\top$ be the $i$-th row of $P^\top$, $0 \le i \le 5$. Then, 
$$
\left ({P_i}^\top, {P_j}^\top \right)_\si = n \delta_i \delta_{ij}, \quad
0 \le i, j \le 5,
$$
where $\si := (m_0^{-1}, m_1^{-1}, \dotsc, m_5^{-1})$. Note that
\begin{eqnarray*}
b_i b_j^* & = & P_{0i}P_{0j}e_0 + P_{1i}P_{1j}e_1 + (P_{2i}P_{2j} + P_{4i}P_{4j})e_2 +
(P_{5i}P_{5j} + P_{3i}P_{3j})e_3 \\
& & \ + (P_{2i}P_{5j} + P_{4i}P_{3j})e_4 +
(P_{5i}P_{2j} + P_{3i}P_{4j})e_5.
\end{eqnarray*}
Since $P_0^\top = (1, 1, 1, 1, 0, 0)$, $m_2 = m_3 = m_4 = m_5$, and 
$b_i b_j^* = \sum_{k} \l_{ij^*k} b_k$, we get that
$$
\left ({P_i}^\top, {P_j}^\top \right)_\si =
\sum_{k} \l_{ij^*k} \left( P_k^\top, {P_0}^\top \right)_\si =
\l_{ij^*0} \left( P_0^\top, {P_0}^\top \right)_\si =  n \l_{ij^*0}.
$$
Thus, $\l_{ij^*0} = \delta_i \delta_{ij}$, $0 \le i, j \le 5$. Furthermore, since
$\delta_i = \delta_{i^*}$, we have $\l_{ii^*0}= \l_{i^* i0} = \delta_i > 0$. So 
$\b$ is an RBA$^\delta$-basis.
\end{proof}

Denoting $B_i$ by $\begin{bmatrix} r_i & s_i \\ t_i  & u_i \end{bmatrix}$,
 we can write the transition matrix $P:=P(\b)$ in the following form
\begin{equation}\label{pmatrix}
\begin{array}{c|cccccc}
\ & b_0 & b_1 & b_2 & b_3 & b_4 & b_5\\
\hline
e_0 & 1 & \delta_1 & \delta_2 & \delta_3 & \delta_4 & \delta_4\\
e_1 & 1 & \phi_1 & \phi_2 & \phi_3 & \phi_4 & \phi_4\\
e_2 & 1 & r_1 & r_2 & r_3 & r_4 & r_4\\
e_3 & 1 & u_1 & u_2 & u_3 & u_4 & u_4\\
e_4 & 0 & s_1 & s_2 & s_3 & s_4 & t_4\\
e_5 & 0 & s_1 & s_2 & s_3 & t_4 & s_4\\
\end{array}
\end{equation}
Notice that $e_\delta = e_0,e_\phi=e_1$ and $e_\chi=e_2+e_3$. It follows from the row orthogonality (Theorem~\ref{010616a}(iii)) that the sum of each row of $P$ (except for the first row) is zero. In particular $1+\phi_1 + \phi_2 +  \phi_3 + 2\phi_4=0$.

We claim that it is impossible to have $\frac{\phi_i}{\delta_i} = \frac{\phi_4}{\delta_4}$ for all $i=1,2,3$.  If these ratios were all equal to the same constant $\lambda$, then the orthogonality of the second and the third rows of 
$P$ would imply that 
$$
0 =1 +\sum_{i=1}^5 \phi_i r_i\delta_i^{-1} = 1+\lambda\sum_{i=1}^5 r_i = 1-\lambda.
$$   
Thus, $\lambda = 1$. But then all of the $\phi_i$ would be positive, a contradiction to that $1+\sum_{i=1}^5 \phi_i = 0$.  

Now we are ready to prove the main result of the section. 

\begin{thm}\label{020616a}
Let $\delta_1,...,\delta_4$ and $\phi_1,...,\phi_4$  be real numbers such that
 $\delta_i > 0$ for all $i=1,\dots,4$ and $1 + \phi_1+ \phi_2+\phi_3 + 2\phi_4 
= 0$. Assume that $\delta_0 = 1$, $\delta_5 = \delta_4$, $\phi_0 = 1$, 
$\phi_5 = \phi_4$, and   
$\phi_4/\delta_4\neq\phi_i/\delta_i$ for some $i\in\{1,2,3\}$. Then there exist real matrices 
$$
B_0=I_2,B_1=\begin{bmatrix} r_1 & s_1 \\ s_1  & u_1 \end{bmatrix},
B_2=\begin{bmatrix} r_2 & s_2 \\ s_2  & u_2 \end{bmatrix},
B_3=\begin{bmatrix} r_3 & s_3 \\ s_3  & u_3 \end{bmatrix},
B_4=\begin{bmatrix} r_4 & s_4 \\ t_4  & u_4 \end{bmatrix}
,B_5:=B_4^\top
$$ 
such that the basis $\b=\left\{b_i:=(\delta_i,\phi_i,B_i) \mid 0 \le i \le 5
\right\}$ is an RBA$^{\delta}$-basis of $A$.

Furthermore, the matrices $B_0,B_1,...,B_5$ are unique up to a conjugation 
by an orthogonal  $2 \times 2$ matrix and a permutation of the indices of 
$B_i$ by an element of the group 
$\langle (1,2), (1,2,3) \rangle \times \langle (4,5) \rangle$.
\end{thm}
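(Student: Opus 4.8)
The plan is to reduce, via Theorem~\ref{010616a}, to a question about Gram matrices in the Euclidean space $M_2(\mathbb R)$ with the (positive definite) form $(X,Y)\mapsto\tr(XY^\top)$. Put $n=1+\delta_1+\delta_2+\delta_3+2\delta_4$; since $\phi_0=1$ one has $\sum_i\phi_i^2/\delta_i\ge1$, so $m_\phi:=n/\sum_i\phi_i^2/\delta_i>0$ is well defined, and set $m_\chi:=(n-1-m_\phi)/2$. I would first observe that the ratio hypothesis is exactly what makes $m_\chi>0$: from $\sum_{i=1}^5\phi_i=-1$ and $\sum_{i=1}^5\delta_i=n-1$, Cauchy--Schwarz gives $\sum_{i=1}^5\phi_i^2/\delta_i\ge1/(n-1)$, hence $m_\phi\le n-1$, with equality precisely when all ratios $\phi_i/\delta_i$ $(1\le i\le5)$ coincide; as $\delta_4=\delta_5$ and $\phi_4=\phi_5$ this happens iff $\phi_i/\delta_i=\phi_4/\delta_4$ for every $i\in\{1,2,3\}$, i.e.\ iff the hypothesis fails. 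So $m_\chi>0$, and the form $\langle x,y\rangle=\tau(xy^*)=\delta(x)\delta(y^*)+m_\phi\phi(x)\phi(y^*)+m_\chi\chi(xy^*)$ is positive definite, with $\mathbb R\oplus\mathbb R$ orthogonal to $M_2(\mathbb R)$ and $\langle(0,0,X),(0,0,Y)\rangle=m_\chi\tr(XY^\top)$. For $\b$ of the shape in the statement, conditions (i) and (ii) of Theorem~\ref{010616a} are automatic, and condition (iii) amounts to $m_\chi>0$ — already verified — together with $\langle b_i,b_j\rangle=n\delta_i\delta_{ij}$ for all $i,j$, which is equivalent to the row-orthogonality of $P(\b)$ as the proof of Theorem~\ref{010616a} shows. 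Writing $b_i=(\delta_i,\phi_i,0)+(0,0,B_i)$ and using that the $\delta$- and $\phi$-parts are prescribed, this orthogonality becomes
\[
\tr(B_iB_j^\top)=\widehat C_{ij}:=\tfrac1{m_\chi}\bigl(n\delta_i\delta_{ij}-\delta_i\delta_j-m_\phi\phi_i\phi_j\bigr),\qquad 0\le i,j\le5,
\]
with $\delta_5=\delta_4$, $\phi_5=\phi_4$. So the task is to realize the prescribed $6\times6$ Gram matrix $\widehat C$ by matrices $B_0=I_2$, symmetric $B_1,B_2,B_3$, and $B_4$ with $B_5=B_4^\top$.

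For existence, I would write $\widehat C=\tfrac1{m_\chi}\Delta^{1/2}\bigl(nI-xx^\top-m_\phi\tilde f\tilde f^\top\bigr)\Delta^{1/2}$ with $\Delta=\operatorname{diag}(\delta_0,\dots,\delta_5)$, $x_i=\delta_i^{1/2}$, $\tilde f_i=\phi_i\delta_i^{-1/2}$; here $\|x\|^2=n$, $\|\tilde f\|^2=n/m_\phi$, and the decisive fact $\langle x,\tilde f\rangle=\sum_i\phi_i=0$ forces the eigenvalues of $nI-xx^\top-m_\phi\tilde f\tilde f^\top$ to be $n$ (multiplicity $4$) and $0$ (multiplicity $2$), so $\widehat C\succeq0$ with rank $4=\dim M_2(\mathbb R)$. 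Splitting $B_i=S_i+A_i$ into symmetric and antisymmetric parts, the required shape forces $A_0=\dots=A_3=0$ and $A_5=-A_4$, so $A_4=aJ$ with $J=\left[\begin{smallmatrix}0&1\\-1&0\end{smallmatrix}\right]$; comparing the $(4,4)$ and $(4,5)$ entries of $\widehat C$ pins $a$ down by $a^2=\tfrac14(\widehat C_{44}-\widehat C_{45})=n\delta_4/(4m_\chi)>0$ (the sign of $a$ is irrelevant, being absorbed by conjugation by $\operatorname{diag}(1,-1)$). What remains is to realize $S_0=I_2,S_1,S_2,S_3,S_4$ in the $3$-dimensional Euclidean space $(\operatorname{Sym}_2(\mathbb R),\tr(XY))$ with the Gram matrix $C''$ obtained from $\widehat C$ by deleting row/column $5$ and changing the $(4,4)$-entry to $\tr(S_4^2)=\tfrac12(\widehat C_{44}+\widehat C_{45})$. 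Rerunning the congruence argument on this $5\times5$ matrix — now $C''=\tfrac1{m_\chi}N^{1/2}(I_5-vv^\top-m_\phi ww^\top)N^{1/2}$ with $N=\operatorname{diag}(n\delta_0,n\delta_1,n\delta_2,n\delta_3,n\delta_4/2)$, and using $\sum_{i=0}^3\delta_i+2\delta_4=n$, $\sum_{i=0}^3\phi_i^2/\delta_i+2\phi_4^2/\delta_4=n/m_\phi$, $\sum_{i=0}^3\phi_i+2\phi_4=0$ to get $\|v\|^2=1$, $\|w\|^2=1/m_\phi$, $\langle v,w\rangle=0$ — yields eigenvalues $1,1,1,0,0$, so $C''\succeq0$ with rank $3$. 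Hence matrices $S_0',\dots,S_4'\in\operatorname{Sym}_2(\mathbb R)$ with Gram matrix $C''$ exist; since $\tr((S_0')^2)=C''_{00}=2=\tr(I_2^2)$, composing with an orthogonal transformation of $\operatorname{Sym}_2(\mathbb R)$ makes $S_0'=I_2$. Setting $B_i:=S_i$ $(i\le3)$, $B_4:=S_4+aJ$, $B_5:=B_4^\top$, a direct check gives $\tr(B_iB_j^\top)=\widehat C_{ij}$ for all $i,j$, so $\b$ is an RBA$^\delta$-basis of the required form.

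For uniqueness, two solutions $\b,\b'$ for the same data share the Gram matrix $\widehat C$, which has rank $4$; hence the linear map $T$ of $M_2(\mathbb R)$ with $T(B_i)=B_i'$ is well defined, orthogonal, and fixes $I_2$. The matrices $B_0,B_1,B_2,B_3$ span $\operatorname{Sym}_2(\mathbb R)$ — the principal submatrix of $\widehat C$ on indices $\{0,1,2,3\}$ has rank $\le3$ (these matrices are symmetric) and a trace computation rules out rank $\le 2$ (rank $2$ would force $\delta_4^2+m_\phi\phi_4^2=0$) — so $T$ preserves $\operatorname{Sym}_2(\mathbb R)$ and hence $\operatorname{Skew}_2(\mathbb R)$. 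I would then check that the orthogonal transformations of $M_2(\mathbb R)$ fixing $I_2$ and preserving $\operatorname{Sym}_2(\mathbb R)$ are exactly those in the group generated by the conjugations $X\mapsto gXg^{-1}$, $g\in O(2)$, and the transpose $X\mapsto X^\top$; accordingly $\b'$ is obtained from $\b$ either by conjugating every $B_i$ by some $g\in O(2)$, or by doing so after a transpose, which replaces $(B_4,B_5)$ by $(B_5,B_4)$ — i.e.\ applies the transposition $(4,5)$. Letting the a priori unordered triple $\{B_1,B_2,B_3\}$ be relabelled contributes the remaining factor $\langle(1,2),(1,2,3)\rangle$, yielding the group $\langle(1,2),(1,2,3)\rangle\times\langle(4,5)\rangle$ of the statement.

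The step I expect to be the main obstacle is the uniqueness bookkeeping: one must pin down exactly which orthogonal maps of $M_2(\mathbb R)$ come from $O(2)$-conjugation — they form an index-two subgroup of the stabilizer of $I_2$ among the $\operatorname{Sym}_2$-preserving orthogonal maps, the remaining coset being realized by the transpose — so as to see that the $(4,5)$-swap is genuinely needed and that nothing larger than the stated group appears. By contrast the existence half is largely routine once recast as a Gram-matrix problem, the only delicate point being the two eigenvalue computations, which succeed precisely because of the normalization $1+\phi_1+\phi_2+\phi_3+2\phi_4=0$.
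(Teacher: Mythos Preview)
Your proof is correct and takes a genuinely different route from the paper's.

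The paper proves $m_\chi>0$ via Lagrange multipliers (you use Cauchy--Schwarz, which is equivalent), then works directly with the transition matrix $P(\b)$: after normalizing $B_1$ to be diagonal and assuming without loss of generality that $\phi_3/\delta_3\ne\phi_4/\delta_4$, it constructs the rows of a modified matrix $P'$ one at a time by solving small linear systems coming from the orthogonality relations in $(\mathbb R^6,(\cdot,\cdot)_\delta)$. Each row is determined up to a sign, and the three sign choices are identified respectively with the swap $(4,5)$ and with conjugation by the orthogonal matrices $\operatorname{diag}(-1,1)$ and $\left[\begin{smallmatrix}0&1\\1&0\end{smallmatrix}\right]$; the preliminary diagonalization of $B_1$ absorbs the remaining $O(2)$-freedom.

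Your approach recasts everything as a Gram-matrix realization problem in $(M_2(\mathbb R),\tr(XY^\top))$: the rank computations for $\widehat C$ and $C''$ via the explicit congruence with $nI-xx^\top-m_\phi\tilde f\tilde f^\top$ are clean, and the split into symmetric and skew parts handles the shape constraints uniformly without a WLOG reduction. For uniqueness you identify the stabilizer of $I_2$ in the orthogonal group of $M_2(\mathbb R)$ preserving $\operatorname{Sym}_2$ as $O(2)$-conjugation together with transpose, which is more conceptual than the paper's sign-tracking. One small remark: your stated reason that $\operatorname{rank}\widehat C^{(4)}=3$ (``rank $2$ would force $\delta_4^2+m_\phi\phi_4^2=0$'') is not transparent, but the conclusion follows immediately from your own setup, since for the positive semidefinite matrix $C''$ the null space of its principal $4\times4$ block embeds (by extending by zero) into $\ker C''$, and the latter is two-dimensional with only a one-dimensional subspace having vanishing last coordinate.

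What each approach buys: the paper's row-by-row construction yields explicit formulas for the entries of the $B_i$, which are exactly what is needed for the computer enumeration in Section~4 and the structure-constant computations in Section~6. Your argument is shorter, avoids case-by-case WLOG choices, and makes the $O(2)\times\langle(4,5)\rangle$ symmetry manifest from the outset; it would also generalize more readily to analogous Gram-realization problems.
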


\begin{proof} First of all, let us fix the following numbers:
$$
n :=\sum_{i=0}^5\delta_i, \ \
 m_\phi :=\frac{n}{\sum_{i=0}^5 (\phi_i^2 / \delta_i)}, \ \ 
m_\chi:=\frac{n-1-m_\phi}{2}.
$$
Without loss of generality, for the rest of the proof we assume that 
$\frac{\phi_3}{\delta_3}\neq\frac{\phi_4}{\delta_4}$.

In what follows we will need the inequality $m_\chi > 0$. For this reason let
us first prove this inequality. It is clear that $m_\chi > 0$ if and only
if $m_\phi < n - 1$, which is equivalent to 
$n < (n -1) \sum_{i=0}^5 (\phi_i^2 / \delta_i)$. Since $\delta_0 = \phi_0 
=1$, we see that
$$
m_\chi > 0 \quad \hbox{if and only if} \quad
 \sum_{i=1}^5 \frac{\phi_i^2}{\delta_i} > \frac{1}{n-1}.
$$
The expression $\sum_{i=1}^5 \frac{\phi_i^2}{\delta_i}$ is a function of the real variables $\phi_i$ which satisfy $\phi_1 + ... +\phi_5 = -1$.  Using the Lagrange multipliers, one can find that the minimal value of 
$\sum_{i=1}^5 \frac{\phi_i^2}{\delta_i}$ is $\frac{1}{n-1}$, and this value
 is reached when the ratios $\phi_i/\delta_i,i=1,...,5$ are all equal to $-\frac{1}{n-1}$. 
 Thus, $\sum_{i=1}^5 \frac{\phi_i^2}{\delta_i}\geq \frac{1}{n-1}$, and the equality holds only when $\phi_i/\delta_i = -\frac{1}{n-1}$, $i=1,...,5$.   
But by the assumption, the equality
$\phi_i/\delta_i = -\frac{1}{n-1}$ is not satisfied for all $i\in\{1,...,5\}$.  Thus, $\sum_{i=1}^5 \frac{\phi_i^2}{\delta_i} > \frac{1}{n-1}$, and
hence $m_\chi > 0$, as desired.

To prove the existence of matrices $B_i, 1 \le i \le 4$, such that 
$\b=\left\{b_i:=(\delta_i,\phi_i,B_i) \mid 0 \le i \le 5
\right\}$ is an RBA$^{\delta}$-basis of $A$, it is equivalent to prove  that the transition matrix 
$$
P:=P(\b)= \left(
\begin{array}{cccccc}
1 & \delta_1 & \delta_2 & \delta_3 & \delta_4 & \delta_4\\
 1 & \phi_1 & \phi_2 & \phi_3 & \phi_4 & \phi_4\\
 1 & r_1 & r_2 & r_3 & r_4 & r_4\\
 1 & u_1 & u_2 & u_3 & u_4 & u_4\\
 0 & s_1 & s_2 & s_3 & s_4 & t_4\\
 0 & s_1 & s_2 & s_3 & t_4 & s_4\\
\end{array}
\right)
$$ 
satisfies the condition (iii) of Theorem~\ref{010616a}.  Since any symmetric matrix is conjugate to a diagonal matrix by an orthogonal matrix, we may assume that one of $B_1,B_2,B_3$ is diagonal.  So for the rest of the proof, we assume that $B_1$ is diagonal. That is, $s_1=0$. 

By our assumptions the first two rows of $P$ do satisfy the condition (iii) of Theorem~\ref{010616a}.  It remains to find the other rows of $P$ that will satisfy the condition (iii) of Theorem~\ref{010616a}.

Let $P_i$ be the $i$-th row of $P$, $0 \le i \le 5$, and let 
$$
P':=\left(
\begin{array}{c}
P_0\\
P_1\\
P_2+P_3\\
P_2 - P_3\\
P_4 + P_5\\
P_4 - P_5
\end{array}
\right).
$$
Let $P'_i$ be the $i$-th row of $P'$, $0 \le i \le 5$.  It is easy to see that $P$ satisfies the condition (iii) of Theorem~\ref{010616a} if and only if $(P'_i, P'_j)_\delta = 0$ for all $i \ne j$, and 
$$
(P'_0, P'_0)_\delta = n, \ (P'_1,P'_1)_\delta =\frac{n}{m_\phi}, \
(P'_2,P'_2)_\delta = (P'_3,P'_3)_\delta = (P'_4,P'_4)_\delta = 
(P'_5,P'_5)_\delta = 2\frac{n}{m_\chi},
$$
where by abuse of the notation, $\delta = (\delta_0, \delta_1, \dotsc, 
\delta_5)$.

Since $P'_5 = (0,0,0,0,s_4-t_4,t_4-s_4)$,  $(P'_5,P'_5)_\delta = 
2\frac{n}{m_\chi}$ if and only if $P'_5 =\pm \sqrt{\frac{n\delta_4}{m_\chi}} (0,0,0,0,1,-1)$.
The choice of the sign is equivalent to the permutation 
$b_4\leftrightarrow b_5$.  

Thus it remains to find the third, fourth and fifth rows of $P'$. We take 
$$
P_2+P_3 := \frac{1}{m_\chi}\left(
(n,0,0,0,0,0) - m_\phi P_1 - P_0\right).
$$   
(This choice follows from the equality $b_0 = e_\delta +e_\phi + e_\chi$).  Direct computation shows that  
$$
(P_2+P_3,P_0)_\delta = (P_2+P_3,P_1)_\delta =0, \quad
\hbox{and} \quad (P_2+P_3,P_2+P_3)_\delta = 2\frac{n}{m_\chi}. 
$$
Notice that the choice for $P_2+P_3$ is unique, because its entries are the values of $\chi$.

Since the row $P_4+P_5=(0,0,2s_2,2s_3,s_4+t_4,s_4+t_4)$ is orthogonal to 
both $P_0$ and $P_1$, we see that 
$$
  s_2 +  s_3 + (s_4+t_4) = 0 \quad \mbox{ and } \quad 
\frac{\phi_2}{\delta_2} s_2 + \frac{\phi_3}{\delta_3} s_3 +  \frac{\phi_4}{\delta_4} (s_4+t_4) = 0.
$$
But $\phi_3/\delta_3\neq\phi_4/\delta_4$. So the above system has rank two, and therefore has a one-dimensional solution space spanned by the vector  
$$
w=\left(0,0,\frac{\phi_4}{\delta_4}-\frac{\phi_3}{\delta_3},
\frac{\phi_2}{\delta_2}-\frac{\phi_4}{\delta_4},
\frac{\phi_3}{\delta_3}-\frac{\phi_2}{\delta_2},\frac{\phi_3}{\delta_3}-\frac{\phi_2}{\delta_2}\right).
$$
To satisfy the condition $(P_4+P_5,P_4+P_5)_\delta =2\frac{n}{m_\chi}$, one has to set $P_4+P_5 := \lambda w$, where $\lambda = \pm\sqrt{\frac{2n}{m_\chi (w,w)_\delta}}$. The choice of the sign is free, because a different choice of the sign
 is  equivalent to the conjugation of $B_i$'s by the matrix $
\begin{bmatrix} -1 & 0 \\ 0  & 1 \end{bmatrix}$.
Notice that the above $P_4+P_5$ is also orthogonal to 
$P_2+P_3$, because $P_2+P_3$ is a linear combination of $(1,0,0,0,0,0)$, $P_0$ and $P_1$. 

It remains to find $P_2-P_3 = (0,r_1-u_1,r_2-u_2,r_3-u_3,r_4-u_4,r_4-u_4)$. 
From the orthogonality conditions of the rows of $P'$ with respect to the 
inner product $(\ , \ )_\delta$, 
we obtain the following system of equations:
$$
\left\{
\begin{array}{c}
(r_1-u_1)+(r_2-u_2)+(r_3-u_3)+2(r_4-u_4) = 0\\
\frac{\phi_1}{\delta_1}(r_1-u_1)+\frac{\phi_2}{\delta_2}(r_2-u_2)+\frac{\phi_3}{\delta_3}(r_3-u_3)+2\frac{\phi_4}{\delta_4}(r_4-u_4) = 0\\
\left(\frac{\phi_4}{\delta_4}- \frac{\phi_3}{\delta_3}\right)(r_2-u_2)+\left(\frac{\phi_2}{\delta_2}- \frac{\phi_4}{\delta_4}\right)(r_3-u_3)+2\left(\frac{\phi_3}{\delta_3}- \frac{\phi_2}{\delta_2}\right)(r_4-u_4) = 0\\
\end{array}
\right.
$$
This system has rank three. Therefore, up to a sign there exists a unique solution $u$ with 
$(u,u)_\delta =2\frac{n}{m_\chi}$. We can chose either one to be our $P_2-P_3$. As before, a different choice
of the sign is equivalent to a conjugation by 
the matrix $\begin{bmatrix} 0 & 1 \\ 1  & 0 \end{bmatrix}$. 
It is clear now that the above constructed matrix $P'$ also satisfies the property 
$(P'_i, P'_j)_\delta = 0$ for all $i \ne j$. So the theorem holds.
\end{proof}

\medskip
A consequence of Theorem~\ref{020616a} is that every possible character table of a noncommutative rank $6$ RBA from Theorem~\ref{010616a} actually occurs.  

\begin{corollary}\label{chartable}
The possible character tables of noncommutative rank $6$ RBAs with positive degree map $\delta$ are precisely those of the form 
$$\begin{array}{r|cccccc|l} 
& b_0 & b_1 & b_2 & b_3 & b_4 & b_5 & \mbox{ multiplicity } \\ \hline
\delta & 1 & \delta_1 & \delta_2 & \delta_3 & \delta_4 & \delta_4 & 1 \\
\phi & 1 & \phi_1 & \phi_2 & \phi_3 & \phi_4 & \phi_4 & m_{\phi} \\
\chi & 2 & \chi_1 & \chi_2 & \chi_3 & \chi_4 &\chi_4 & m_{\chi}
\end{array}$$
where 

$\delta_i > 0$ for $i=1,\dots,4$, 

\smallskip
$1 + \phi_1 + \phi_2 + \phi_3 + 2 \phi_4 = 0$, \  $\dfrac{\phi_i}{\delta_i} \neq \dfrac{\phi_4}{\delta_4}$ for some $1\leq i \leq 3$, 

\smallskip
$m_\phi = \dfrac{n}{\sum_{i}(\phi_i^2/\delta_i)}$ and $\ m_\chi=\frac{n-1-m_\phi}{2}$, where $n = 1 + \delta_1 + \delta_2 + \delta_3 + 2\delta_4$, and 

\smallskip
$\chi_i = \dfrac{-\delta_i-m_{\phi}\phi_i}{m_{\chi}}$, for $i=1,2,3,4$.

Conversely, any such character table determines a noncommutative rank $6$ RBA up to exact isomorphism. 
\end{corollary}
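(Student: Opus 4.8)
The plan is to derive the corollary from Theorems~\ref{010616a} and~\ref{020616a}, which already contain essentially all of the content; what remains is to assemble the pieces, to verify the formula for the $\chi$-row of the table, and to translate the uniqueness clause of Theorem~\ref{020616a} into a statement about exact isomorphism. I would organize the argument into three parts: (1) the listed constraints are necessary; (2) every table of the listed form is realized by some noncommutative rank~$6$ RBA with positive degree map; (3) any two such RBAs with the same table are exactly isomorphic.

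For (1) I would start from a noncommutative rank~$6$ RBA $(A,\mathbf{B})$ with positive degree map, standardize $\mathbf{B}$, and use Lemma~\ref{splitRba} to present $A$ as $\mathbb{R}\oplus\mathbb{R}\oplus M_2(\mathbb{R})$ with $\mathbf{B}$ of the form~\eqref{eq080317}. This immediately gives $\delta_0=\phi_0=1$, $\chi_0=2$, and $\delta_i>0$, and from $b_5=b_4^*$ together with $\delta(a^*)=\overline{\delta(a)}$ and the reality of $\phi$ and $\chi$ it gives $\delta_5=\delta_4$, $\phi_5=\phi_4$, $\chi_5=\chi_4$. The relation $1+\phi_1+\phi_2+\phi_3+2\phi_4=0$ is the row-sum observation recorded after~\eqref{pmatrix}, and the inequality $\phi_i/\delta_i\neq\phi_4/\delta_4$ for some $i\in\{1,2,3\}$ is exactly the claim established just before Theorem~\ref{020616a}. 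Higman's multiplicity formula $\sum_i|\psi(b_i)|^2/\delta_i=\psi(1)n/m_\psi$ gives $m_\phi=n/\sum_i(\phi_i^2/\delta_i)$, and $n=\tau(b_0)=1+m_\phi+2m_\chi$ gives $m_\chi=(n-1-m_\phi)/2$. Finally, evaluating $\tau=\delta+m_\phi\phi+m_\chi\chi$ on $b_i$ for $i\ge 1$ and using $\tau(b_i)=0$ yields $\delta_i+m_\phi\phi_i+m_\chi\chi_i=0$; since $m_\chi>0$ (a consequence of the ratio condition, by the Lagrange-multiplier argument inside the proof of Theorem~\ref{020616a}), this rearranges to $\chi_i=(-\delta_i-m_\phi\phi_i)/m_\chi$.

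For (2), given a table of the stated shape, the numbers $\delta_1,\dots,\delta_4,\phi_1,\dots,\phi_4$ satisfy precisely the hypotheses of Theorem~\ref{020616a}, which therefore produces matrices $B_0,\dots,B_5$ making $\mathbf{B}=\{(\delta_i,\phi_i,B_i)\}$ an RBA$^\delta$-basis of $A=\mathbb{R}\oplus\mathbb{R}\oplus M_2(\mathbb{R})$. Its $\delta$- and $\phi$-rows are as prescribed by construction, and its $\chi$-row is $\chi_i=\operatorname{tr}(B_i)=r_i+u_i$, which is the $i$-th entry of the vector $P_2+P_3$ chosen in that proof to equal $\tfrac{1}{m_\chi}\big((n,0,0,0,0,0)-m_\phi P_1-P_0\big)$; reading off entries gives $\chi_0=2$ and $\chi_i=(-\delta_i-m_\phi\phi_i)/m_\chi$ for $i\ge 1$, so the character table is exactly the given one. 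For (3), let $(A,\mathbf{B})$ and $(A',\mathbf{B}')$ be two such RBAs with the same table; by Lemma~\ref{splitRba} both standardized bases may be written in the form~\eqref{eq080317} over $\mathbb{R}\oplus\mathbb{R}\oplus M_2(\mathbb{R})$ with the \emph{same} tuples $(\delta_i)$ and $(\phi_i)$. The uniqueness clause of Theorem~\ref{020616a} then says the two families of $2\times2$ matrices agree after conjugating by a single orthogonal matrix $O$ and relabeling indices by an element $\rho$ of $G=\langle(1,2),(1,2,3)\rangle\times\langle(4,5)\rangle$; since both families realize the same character table, $\rho$ must preserve the values $\delta_i,\phi_i$, and then the composite ``relabel by $\rho$, then conjugate the $M_2$-component by $O$'' is an algebra automorphism of $A$ carrying $\mathbf{B}$ bijectively onto $\mathbf{B}'$, whose linear extension is the desired exact isomorphism.

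The main obstacle is the very last step: one must argue carefully that the relabeling $\rho$ coming out of Theorem~\ref{020616a} genuinely lies in the stabilizer of the pair of tuples $(\delta_i),(\phi_i)$ — this uses that $\rho$ respects the $G$-blocks $\{1,2,3\}$ and $\{4,5\}$, that $\operatorname{tr}(B_i)=\chi_i$ is forced, and that the $\delta_i$ are positive while not all $\phi_i$ are (ruling out an interchange of the two linear summands) — and that the resulting relabel-then-conjugate map is compatible with the involution $(x,y,Z)^*=(x,y,Z^\top)$ and the degree map, so that it is an exact isomorphism of standard RBA$^\delta$-bases and not merely an abstract algebra isomorphism. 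Everything else is routine bookkeeping that transcribes the two theorems and the multiplicity formula.
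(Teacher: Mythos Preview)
Your proposal is correct and follows essentially the same route as the paper: the paper's own proof is a two-sentence sketch that invokes Theorems~\ref{010616a} and~\ref{020616a} and reads off $\chi_i$ from $0=\tau(b_i)=\delta_i+m_\phi\phi_i+m_\chi\chi_i$, while you spell out the same ingredients in full (necessity, existence, uniqueness up to exact isomorphism) and also cite the $P_2+P_3$ computation to verify the $\chi$-row. Your final paragraph is more careful than the paper about translating the ``orthogonal conjugation plus index permutation'' clause of Theorem~\ref{020616a} into an exact isomorphism; the paper simply asserts this without discussing whether $\rho$ stabilizes the tuples $(\delta_i),(\phi_i)$, so your extra scrutiny there is warranted but does not represent a different method.
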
 

\begin{proof} 
This follows from Theorem \ref{010616a}, since we now know that every such set of parameters $[(\delta_i),(\phi_i)]$ determines the standard basis of a noncommutative rank $6$ RBA up to exact isomorphism.  To compute the values of $\chi$ we use the fact that $0 = \tau(b_i) = \delta_i + m_{\phi}\phi_i +m_{\chi}\chi_i$ for $i=1,2,3,4$. 
\end{proof} 

In the above corollary, if the algebra
 is a standard table algebra, then $|\phi_i|\leq \delta_i$ and $|\chi_i|\leq 2\delta_i$.

\section{Examples} 
\label{s-ex}

Using the results in Section \ref{s-exist}, we have been able to use a computer to enumerate noncommutative integral RBA$^{\delta}$-bases of rank $6$ of order $n \le 150$. 
Since our main interest focuses on standard integral table algebras, we restricted the parameters by the inequality  $|\phi_i| \le \delta_i, i=1,\dots,4$ (see \cite[Proposition 4.1]{x08}), so our list includes all non-commutative standard integral table algebras of rank $6$ up to order $150$.   To achieve an enumeration for order $n$, we first find all feasible character tables with integer entries.  For each character table, we compute a standard RBA$^{\delta}$-basis using the procedure explained in Theorem \ref{010616a}, then check for integrality of structure constants using (\ref{scs1}).  

 In this table, $C_m$ represents a cyclic group (thin scheme) of order $m$, $K_m$ represents a rank
$2$ scheme of order $m$, $U_m$ and $T_m$ represent symmetric and anti-symmetric rank $3$ schemes of order $m$, respectively.  $D_m$ denotes a table algebra of rank $2$ having non-integral rational order $m$, and $E_m$ is a noncommutative rank $5$ RBA of order $m$.  Extensions of a normal closed subset $U$ are indicated by $U \rtimes T$ or $U:T$ depending on whether or not the extension splits.  Almost always the extensions of a table algebra by a table algebra results in a table algebra, but there are a few exceptions.  Only in a few circumstances do we know when a table algebra that is an extension of two schemes is realized by a scheme.  One new imprimitive family we encountered is a circle product of a noncommutative rank $5$ RBA with a rank $2$ scheme, which we label with $E \circ K$.  As the order increases these occur with increasing frequency, and so in the table only those $E \circ K$'s with order up to $50$ are listed.  In some cases the RBA or table algebra our construction produces is primitive.  Such examples were not noted previously.

\bigskip
\centerline{ {\bf Table 1: Parameter Sets for Rank 6 RBAs of order $\le$ 150 with $|\phi_i| \le \delta_i$.} }

{\small
$$
\begin{array}{r|l|l|l}
n & [\delta,\phi] & (m_{\phi},m_{\chi}) & \mbox{comments} \\ \hline
6 & [(1,1,1,1),(-1,-1,-1,1)] & (1,2) & \mbox{The group $S_3 \simeq C_3 \rtimes C_2$} \\
10 & [(1,2,2,2),(-1,2,2,-2)] & (1,4) & \mbox{$U_5 \rtimes C_2$, {\tt as10-10}} \\
14 & [(1,3,3,3),(-1,-3,-3,3)] & (1,6) & \mbox{$T_7 \rtimes C_2$, {\tt as14-10}} \\
15 & [(4,4,4,1),(-1,-1,-1,1)] & (4,5) & \mbox{$C_3:K_5$, $3$-array, {\tt as15-16}} \\
18 & [(1,4,4,4),(-1,4,4,-4)] & (1,8) & \mbox{$U_9 \rtimes C_2$, {\tt as18-37}} \\
21 & [(2,2,8,4),(-1,-1,-1,1)] & (8,6) & \mbox{$CS(b_1,b_2)=PG(1,2)$, {\tt as21-19}} \\
22 & [(1,5,5,5),(-1,-5,-5,5)] & (1,10) & \mbox{$T_{11} \rtimes C_2$, {\tt as22-8}} \\
24 & [(7,7,7,1),(-1,-1,-1,1)] & (7,8) & \mbox{$C_3:K_8$, {\tt as24-72}} \\
26 & [(1,6,6,6),(-1,6,6,-6)] & (1,12) & \mbox{$U_{13} \rtimes C_2$, {\tt as26-21}} \\
30 & [(1,7,7,7),(-1,-7,-7,7)] & (1,14) & \mbox{$T_{15} \rtimes C_2$, {\tt as30-73}} \\
33 & [(10,10,10,1),(-1,-1,-1,1)] & (10,11) & \mbox{$C_3:K_{11}$, TA not AS} \\
34 & [(1,8,8,8),(-1,8,8,-8)] & (1,16) & \mbox{$U_{17} \rtimes C_2$, {\tt as34-9}} \\
35 & [(2,2,6,12),(2,2,-1,-2)] & (6,14) & \mbox{$U_5:K_7$, TA not AS} \\
38 & [(1,9,9,9),(-1,-9,-9,9)] & (1,18) & \mbox{$T_{19} \rtimes C_2$, {\tt as38-19}} \\
42 & [(1,10,10,10),(-1,10,10,-10)] & (1,20) & \mbox{$U_{21} \rtimes C_2$} \\
42 & [(6,10,15,5),(-1,0,0,0)] & (36,5/2) & \mbox{$E_6 \circ K_7$, Not TA} \\
42 & [(13,13,13,1),(-1,-1,-1,1)] & (13,14) & \mbox{$C_3:K_{14}$, TA} \\
45 & [(4,4,4,16),(-1,0,0,0)] & (36,4) & \mbox{$E_9 \circ K_5$, Not TA} \\
45 & [(4,12,12,8),(-1,0,0,0)] & (36,4) & \mbox{$E_9 \circ K_5$, Not TA} \\
46 & [(1,11,11,11),(-1,-11,-11,11)] & (1,22) & \mbox{$T_{23} \rtimes C_2$} \\
48 & [(4,12,15,8),(0,0,-1,0)] & (45,1) & \mbox{$E_{3} \circ K_{16}$, Not TA} \\
50 & [(1,12,12,12),(-1,12,12,-12)] & (1,24)  &\mbox{$U_{25} \rtimes C_2$} \\
50 & [(5,10,24,5),(0,0,-1,0)] & (48,\frac12) & \mbox{$E_2 \circ K_{25}$, Not TA} \\
51 & [16,16,16,1],[-1,-1,-1,1] & (16,17) & \mbox{$C_3:K_{17}$, TA} \\
52 & [3,3,27,9],[-1,-1,-1,1] & (27,12) & \mbox{$CS(b_1,b_2)=PG(1,3)$} \\
54 & [1,13,13,13],[-1,-13,-13,13] & (1,26) & \mbox{$T_{27} \rtimes C_2$, TA} \\
58 & [1,14,14,14],[-1,14,14,-14] & (1,28) &  \mbox{$U_{29} \rtimes C_2$} \\
60 & [2,2,11,22],[2,2,-1,-2] & (11,24) & \mbox{$U_5:K_{12}$, TA} \\
60 & [19,19,19,1],[-1,-1,-1,1] & (19,20) & \mbox{$C_3:K_{20}$, TA} \\
62 & [1,15,15,15],[-1,-15,-15,15] & (1,30) & \mbox{$T_{31} \rtimes C_2$} \\
63 & [8,24,24,3],[-1,-3,-3,3] & (8,27) & \mbox{$T_7:K_9$, $7$-array AS} \\
64 & [7,7,35,7],[-1,-1,3,-1] & (35,14) & \mbox{Not TA, primitive} \\
64 & [7,14,14,14],[7,-2,-2,-2] & (7,28) & \mbox{$K_8:T_8$, TA} \\
64 & [9,9,27,9],[1,1,-5,1] & (27,18) & \mbox{Not TA, primitive} \\
66 & [1,16,16,16],[-1,-16,-16,16] & (1,32) & \mbox{$U_{33} \rtimes C_2$}  \\
66 & [5,15,15,15],[-1,1,1,-1] & (45,10) & \mbox{$K_6:T_{11}$, Not TA} \\
69 & [ 22, 22, 22, 1 ], [ -1, -1, -1, 1 ] & ( 22, 23 ) & \mbox{$C_3:K_{23}$, TA} \\
70 & [ 1, 17, 17, 17 ], [ -1, -17, -17, 17 ] & (1, 34) & \mbox{$T_{35} \rtimes C_2$}  \\
\end{array}$$}

{\small
$$\begin{array}{r|l|l|l}
n & [\delta,\phi] & (m_{\phi},m_{\chi}) & \mbox{comments} \\ \hline
74 & [ 1, 18, 18, 18 ], [ -1, 18, 18, -18 ] & (1, 36) &  \mbox{$U_{37} \rtimes C_2$}  \\
78 & [ (1, 19, 19, 19 ), ( -1, -19, -19, 19 )] & (1, 38) & \mbox{$T_{39} \rtimes C_2$} \\
78 & [ (25, 25, 25, 1), ( -1, -1, -1, 1) ] & (25, 26) & \mbox{$C_3:K_{26}$, TA}  \\
81 & [ (8, 8, 32, 16 ), ( -1, -1, 5, -2 )] & (32, 24) & \mbox{Not TA, primitive} \\
81 & [ (10, 10, 20, 20 ), ( 1, 1, -7, 2) ] & (20, 30) & \mbox{TA, primitive} \\
82 &  [ (1, 20, 20, 20 ), ( -1, 20, 20, -20) ] & (1, 40) &  \mbox{$U_{41} \rtimes C_2$} \\
85 &  [ (2, 2, 16, 32 ), ( 2, 2, -1, -2) ] & (16, 34) & \mbox{$K_{17} : T_5$, TA} \\
86 &  [ (1, 21, 21, 21), ( -1, -21, -21, 21) ] & (1, 42) & \mbox{$T_{43} \rtimes C_2$} \\
87 & [ (28, 28, 28, 1), ( -1, -1, -1, 1 )]  & (28, 29) & \mbox{$C_3:K_{29}$, TA}  \\
90 & [ (1, 22, 22, 22), ( -1, 22, 22, -22) ] & (1, 44) & \mbox{$U_{45} \rtimes C_2$}  \\ 
91 & [ (10, 10, 30, 20), ( 10, 10, -9, -6 )]  & (10/3,130/3) & \mbox{$U_{21}:D_{13/3}$, TA} \\
94 &  [ (1, 23, 23, 23), ( -1, -23, -23, 23) ] & (1, 46) & \mbox{$T_{47} \rtimes C_2$} \\
96 &  [ (19, 19, 19, 19), ( -5, -5, 3, 3 )] & (19, 38) & \mbox{TA, primitive} \\ 
96 &  [ (19, 19, 19, 19), ( -1, -1, -1, 1) ]  & (76, 19/2) & \mbox{TA, primitive} \\
96 & [ (19, 19, 19, 19 ), ( -1, 1, 1, -1) ]  & (76, 19/2) & \mbox{Not TA, primitive}\\ 
96 &  [ (19, 19, 19, 19), ( 3, 3, 3, -5 )]  & (19, 38) & \mbox{TA, primitive} \\ 
96 & [ (31, 31, 31, 1 ), ( -1, -1, -1, 1) ]  & (31, 32) & \mbox{$C_3:K_{32}$, TA} \\
98 & [ (1, 24, 24, 24), ( -1, 24, 24, -24) ]  & (1, 48) & \mbox{$U_{49} \rtimes C_2$} \\
99 & [ (4, 4, 10, 40), ( 4, 4, -1, -4) ]  & (10, 44) & \mbox{$U_9 : K_{11}$} \\
102 & [ (1, 25, 25, 25), ( -1, -25, -25, 25) ]  & (1, 50) & \mbox{$T_{51} \rtimes C_2$} \\ 
105 & [ (34, 34, 34, 1 ), ( -1, -1, -1, 1) ]  & (34, 35) & \mbox{$C_3 : K_{35}$, TA} \\
106 & [ ( 1, 26, 26, 26), ( -1, 26, 26, -26) ]  & (1, 52) & \mbox{$U_{53} \rtimes C_2$} \\ 
110 & [ (1, 27, 27, 27 ), ( -1, -27, -27, 27) ] & (1, 54) & \mbox{$T_{55} \rtimes C_2$} \\ 
110 & [ (2, 2, 21, 42), ( 2, 2, -1, -2) ]  & (21, 44) & \mbox{$U_5:K_{22}$, TA} \\
112 & [ (15, 45, 45, 3 ), ( -1, -3, -3, 3) ]  & (15, 48) & \mbox{$T_7:K_{16}$, TA} \\ 
114 & [ (1, 28, 28, 28 ), ( -1, 28, 28, -28) ] & (1, 56) &\mbox{$U_{57} \rtimes C_2$} \\  
114 & [ (37, 37, 37, 1 ), ( -1, -1, -1, 1) ] & (37, 38) & \mbox{$C_3:K_{38}$, TA} \\  
118 & [ (1, 29, 29, 29), ( -1, -29, -29, 29) ] & (1, 58) & \mbox{$T_{59} \rtimes C_2$} \\ 
120 & [ (17, 17, 51, 17 ), ( -3, -3, 3, 1 )]  & (51, 34) & \mbox{ TA, primitive } \\ 
120 & [ (17, 17, 51, 17), ( 1, 1, 3, -3) ]  & (51, 34) & \mbox{ Not TA, primitive } \\
122 & [ (1, 30, 30, 30), ( -1, 30, 30, -30) ] & (1,60) & \mbox{$U_{61} \rtimes C_2$} \\  
123 & [ (40, 40, 40, 1 ), ( -1, -1, -1, 1) ] & ( 40, 41) &\mbox{$C_3:K_{41}$, TA} \\   
126 & [ (1, 31, 31, 31), ( -1, -31, -31, 31) ] & ( 1, 62 ) & \mbox{$T_{63} \rtimes C_2$} \\ 
130 & [ (1, 32, 32, 32 ), ( -1, 32, 32, -32) ]  &( 1, 64 ) & \mbox{$U_{65} \rtimes C_2$} \\ 
132 & [ (43, 43, 43, 1 ), ( -1, -1, -1, 1) ] & ( 43, 44 ) &\mbox{$C_3:K_{44}$, TA} \\   
134 & [ (1, 33, 33, 33 ), ( -1, -33, -33, 33) ] & ( 1, 66 ) & \mbox{$T_{67} \rtimes C_2$} \\ 
135 &  [ (2, 2, 26, 52), ( 2, 2, -1, -2 )] & ( 26, 54 ) &  \mbox{$U_5:K_{27}$, TA} \\
138 & [ (1, 34, 34, 34 ), ( -1, 34, 34, -34) ] & ( 1, 68 ) & \mbox{$U_{69} \rtimes C_2$} \\ 
141 & [ (46, 46, 46, 1), ( -1, -1, -1, 1) ] & ( 46, 47 ) & \mbox{$C_3:K_{47}$, TA} \\ 
142 & [ (1, 35, 35, 35), ( -1, -35, -35, 35) ] & ( 1, 70 ) & \mbox{$T_{71} \rtimes C_2$} \\ 
142 & [ (15, 21, 35, 35), ( -15, -21, -35, 35) ] & ( 1, 70) & \mbox{$T_{71}:C_2$, TA (see Lemma \ref{2ks}) }\\
143 &  [ (12, 60, 60, 5 ), ( -1, -5, -5, 5) ] & ( 12, 65) & \mbox{$11$-array, TA}\\
144 &  [ (39, 39, 39, 13 ), ( -9, -9, -9, 13) ] & ( 13/3, 208/3 ) &  \mbox{$T_{27}:D_{16/3}$, TA}  \\
146 & [ (1, 36, 36, 36 ), ( -1, 36, 36, -36) ] & ( 1, 72 ) &\mbox{$U_{73} \rtimes C_2$} \\  
150 & [ (49, 49, 49, 1 ), ( -1, -1, -1, 1) ] & ( 49, 50 ) & \mbox{$C_3:K_{50}$, TA} \\ 
150 & [ (1, 37, 37, 37), (-1, -37, -37, 37) ] & ( 1, 74 ) & \mbox{$T_{75} \rtimes C_2$} 
\end{array} $$}

\medskip

\begin{example}{\rm
One family of noncommutative association schemes of rank $6$ identified by Hanaki and Zieschang \cite{HZ} in which every member has a nonnormal closed subset $\{b_0,b_1\}$ of rank $2$ has linear character values $[\delta,\phi] = [(\delta_1,\delta_2,\delta_3,\delta_4),(-1,-c,-c,c)]$, with $m_{\phi}=(m_{\chi}+1)(\delta_1-1)+1$ and $c \in \mathbb{Z}$.   Integralilty of $\chi$ requires that $m_{\chi}$ divide $c m_{\phi}-\delta_i$ for $i=2,3,4$.  The association schemes in this family are semidirect products of a symmetric normal closed subset $U_{1+\delta_2+\delta_3}$ of rank $3$ with a nonnormal closed subset $K_{1+\delta_1}$ of rank $2$.   The members of one subfamily of these schemes are of  the form $U_{4k+1} \rtimes C_2$ and have order $8k+2$.   These appear in the table for every $k \ge 1$.    Alternating with these is a family of association schemes that are semidirect products of an anti-symmetric normal closed subset of rank $3$ with the thin scheme $C_2$.  These have algebraic structure $T_{4k-1} \rtimes C_2$, order $8k-2$, and occur for every $k \ge 1$. 
}\end{example} 

\begin{example}{\rm
Another family discussed in \cite{HZ} has members that are the semidirect product of a symmetric normal closed subset $U_m$ of rank $3$ with a closed subset $K_t$ of rank $2$.  We identify these as $U_m \rtimes K_t$ when no other parameter pattern is in agreement. Members of this family have  parameter sets 
$$
[\delta,\phi] = \Big[ (\delta_1,\delta_1,\delta_3,\delta_4),
\big( \delta_1,\delta_1,\frac{-\delta_3}{m_{\phi}},\frac{-\delta_4}{m_{\phi}} \big) \Big]. 
$$  

We did find some table algebras that have a normal symmetric closed subset $U$ of rank $3$ for which the RBA is a non-split extension of $U_{2u+1}$ by a table algebra of rank $2$.  Such table algebras were observed recently by Yoshikawa \cite{Yoshikawa14}.  The parameter sets are in the above form with $\delta_1$ even.  Since the quotient has rank $2$, we identify these in the table by $U_m:K_t$ or $U_m:D_t$.  The first of these has the form $U_5:K_7$.  It is a table algebra only, since it does not appear in the classification of association schemes of order $35$ and rank $6$.   In fact, according to Yoshikawa, it is not known if any of these table algebras are realized by association schemes. 
}\end{example}

\begin{example}{\rm
Another family of noncommutative rank $6$ table algebras identified by Hanaki and Zieschang in \cite{HZ} has an anti-symmetric thin closed subset $\mathbf{C}=C_3$ of rank $3$ and no other nontrivial closed subsets.  The quotient $\mathbf{B}/\!\!/C_3$ is a rank $2$ scheme $K_{n/3}$.  These have parameters $[\delta,\phi] =[(\ell,\ell,\ell,1),(-1,-1,-1,1)]$ and order $3(\ell+1)$, for all $\ell \equiv 1 \mod 3$.   These are not always association schemes.  In particular the table algebra $C_3:K_{11}$ of order $33$ is not realized by a scheme.  This was noted in \cite{HZ}.  
}\end{example}

\begin{example}{\rm 
The articles of Hanaki-Zieschang \cite{HZ} and Asaba-Hanaki \cite{AH} provide character tables of noncommutative integral table algebras of rank six, the parameters of which, for certain choices of $q$ and $r$, correspond to the projective geometry $PG_{r-1}(r,q)$.  The corresponding values $[\delta,\phi]$ for these table algebras are: 
$$ 
[\delta, \phi] = \Big[ \big(\frac{q(q^r-1)}{q-1},\frac{q^{2r+1}(q^r-1)}{q-1)},\frac{q(q^r-1)^3}{(q-1)^3)},\frac{q^{r+1}(q^r-1)^2}{(q-1)^2} \big),(-1,-q^{r-1},-q^{r-1},q^{r-1}) \Big]. 
$$
These are identified in the table with the label $PG(r,q)$.   If $r=3$ these coincide with Coxeter schemes in \cite{HZ} generated by a pair of noncommuting scheme reflections in $\{b_1,b_2,b_3\}$, which are indicated in the table with the label $CS(b_i,b_j)$.  
}\end{example}

\begin{example} {\rm
The imprimitive rank $6$ association schemes introduced by Drabkin and French \cite{DF} that arise from complete $p$-arrays for Mersenne primes $p$ have linear character values
$$
[\delta,\phi] = \Big[ \big( p+1, \frac{p^2-1}{2},\frac{p^2-1}{2},\frac{p-1}{2}\big),
 (-1,-\frac{p-1}{2},-\frac{p-1}{2},\frac{p-1}{2}) \Big]. 
$$
As one of the structure constants is $\lambda_{454}=\frac{p-3}{4}$, we find that there is an integral table algebra defined by these parameters for every positive odd integer $p \equiv 3 \mod 4$.   These have orders $p^2+2p$, an anti-symmetric normal closed subset of rank $3$ and order $p$, and no other closed subsets, so their algebraic structure is that of an extension $T_p:K_{p+2}$.  Three of these occur in the table, with orders $15$, $63$, and $143$.    
}\end{example}
 
Some of our examples are imprimitive with a normal noncentral closed subset $K_m$ of rank $2$ and order $m$ and no other nontrivial closed subsets.  The quotient is an anti-symmetric table algebra of rank $3$.  We identify these in our list as being of type $K_m: T_{n/m}$.   We also see some other instances of table algebras that are nonsplit extensions of an anti-symmetric normal closed subset by a rank $2$ quotient.  These are identified with $T_m:C_2$, $T_m:K_t$ or $T_m:D_t$ according to quotient type.   
  
\begin{example} {\rm 
Our computer search revealed an ever-increasing number of integral RBAs of rank $6$ with $\phi = (0,0,-1,0)$ (up to order).   In this case the integral RBA $(A,\mathbf{B})$ is the circle product of a normal closed subset $\mathbf{N}=\{b_0,b_3\}$ with a non-commutative rank $5$ rational RBA.  $\mathbf{N}$ is a rank $2$ RBA of order $m$, so we indicate the members of this family by $E_{n/m} \circ K_m$ in the table.  To see this, note that with these values of $\phi$ we must have $e_{\phi}=\frac{m_{\phi}}{n}(b_0-\frac{1}{\delta_3}b_3)$, and the fact that $e_{\phi}^2=e_{\phi}$ implies $\{b_0,b_3\}$ is a closed subset.  It follows that   
$$ 
b_3^2 = \delta_3 b_0 + (\delta_3-1)b_3, m_{\phi} = \frac{n\delta_3}{\delta_3+1}, \mbox{ and } \chi_3 = 2\delta_3. 
$$ 
Further $e_{\phi}$ in the center of $A$ implies $b_3$ is central in $A$.  This means our matrix realization from \S 3 has  
$$ 
b_3=\Big( \delta_3,-1,\begin{bmatrix} \delta_3 & 0 \\ 0 & \delta_3 \end{bmatrix}\Big). 
$$ 
Therefore, for $i \ne 0,3$, $b_ib_3 = b_3b_i = \delta_3b_i$. So the RBA
$(A, \b, \delta)$ is a wreath product $(\b, \bn)$; cf. \cite[Definition 1.2]{BX} and \cite{am}. 
Therefore, the cosets of $\mathbf{N}$ in $\mathbf{B}$ are $\mathbf{N}$, $\mathbf{N}b_1$, $\mathbf{N}b_2$, $\mathbf{N}b_4$, and $\mathbf{N}b_4^*$, and 
we have a quotient RBA
$$ 
\mathbf{B}/\!\!/\mathbf{N} = \{\bar{b}_0, \bar{b}_1, \bar{b}_2, \bar{b}_4, \bar{b}_4^*\}, 
$$
where $\bar{b}_0 = \frac{1}{1+\delta_3}(b_0+b_3)$, $\bar{b}_i = \frac{1}{1+\delta_3}b_i$ for $i \ne 0,3$.  The quotient RBA $\mathbf{B}/\!\!/\mathbf{N}$ has structure constants 
$$ \bar{b}_i\bar{b}_j = \frac{1}{(1+\delta_3)} \sum_{k \ne 3} \lambda_{ijk} \bar{b}_k, $$
so it is a noncommutative RBA of rank $5$, and $\mathbf{B}$ is a table algebra precisely when $\mathbf{B}/\!\!/\mathbf{N}$ is a table algebra. 
If $\mathbf{B}$ is a table algebra, then $\mathbf{B} \cong 
(\mathbf{B}/\!\!/\mathbf{N}) \wr \mathbf{N}$ by \cite[Lemma 3.1]{x11}.  
Noncommutative rank $5$ RBAs were classified in \cite{HMX}. 
}\end{example}

\section{Primitive table algebras} 

The table in Section \ref{s-ex}
contains four examples of primitive integral table algebras: one of order $81$, two of order $96$ and one of order $120$. We would like to know whether they could be Bose-Mesner algebras of association schemes. For this purpose we need the following result.    Throughout this section $(A,\mathbf{B})$ is a noncommutative rank $6$ table algebra with positive degree map $\delta$ and standard basis $\mathbf{B}$ determined by the parameters $\delta_i$, $\phi_i$, $i=1,\dots,4$, as explained in Section~\ref{s-exist}. 

\begin{proposition}\label{180716b}  The center of $A$ is a fusion subalgebra of $(A,\mathbf{B})$ iff the set $\{\phi_i/\delta_i\}_{i=0}^5$ contains at most three distinct elements.
\end{proposition}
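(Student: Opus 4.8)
The plan is to make the elements of $Z(A)$ fully explicit and read off when $Z(A)$ can be spanned by sums over a partition of $\b$. Since $Z(A)\cong\C\oplus\C\oplus\C I_2$ is $3$-dimensional with basis $e_\delta,e_\phi,e_\chi$, and since formula~\eqref{cpis} (with $\delta_{i^*}=\delta_i$, $\phi_{i^*}=\phi_i$, $\chi_{i^*}=\chi_i$ and $\lambda_{ii^*0}=\delta_i$) gives $e_\delta=\tfrac1n\sum_i b_i$, $e_\phi=\tfrac{m_\phi}{n}\sum_i\tfrac{\phi_i}{\delta_i}b_i$, $e_\chi=\tfrac{m_\chi}{n}\sum_i\tfrac{\chi_i}{\delta_i}b_i$, a general central element $ae_\delta+be_\phi+ce_\chi$ has $b_i$-coefficient $\tfrac1n\!\left(a+bm_\phi\tfrac{\phi_i}{\delta_i}+cm_\chi\tfrac{\chi_i}{\delta_i}\right)$. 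For $i\ge1$, $\tau(b_i)=0$ gives $\delta_i+m_\phi\phi_i+m_\chi\chi_i=0$, so $\tfrac{\chi_i}{\delta_i}$ is an affine function of $\tfrac{\phi_i}{\delta_i}$, and the above coefficient ($i\ge1$) equals $\tfrac1n g(\phi_i/\delta_i)$ for one affine function $g$. Since $Z(A)$ is $3$-dimensional and a fusion basis must contain $1_A=b_0$, a fusion basis of $Z(A)$ consists of $b_0$ and two sums $b_{\Lambda_1},b_{\Lambda_2}$ over a partition $\{b_0\}\sqcup\Lambda_1\sqcup\Lambda_2$ of $\b$, with $b_{\Lambda_1},b_{\Lambda_2}\in Z(A)$. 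Imposing centrality forces $g$ to take the value $n$ on $\{\phi_i/\delta_i:i\in\Lambda_1\}$ and $0$ on $\{\phi_i/\delta_i:i\in\Lambda_2\}$; since a nonconstant affine $g$ is injective and a nonzero constant one has no zero, this means $\phi_i/\delta_i$ is constant on $\Lambda_1$ and constant (with a different value) on $\Lambda_2$. Conversely any such splitting does give central sums (choose $c$ so that the $b_0$-coefficient vanishes). Hence $Z(A)$ is a fusion subalgebra if and only if $\Lambda_1,\Lambda_2$ are the two level sets of $i\mapsto\phi_i/\delta_i$ on $\{1,\dots,5\}$ (which are automatically $*$-invariant), i.e.\ if and only if $\{\phi_i/\delta_i:1\le i\le5\}$ has exactly two elements (it never has fewer, by the argument preceding Theorem~\ref{020616a}).

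It remains to show this last condition is equivalent to $\{\phi_i/\delta_i\}_{i=0}^5$ having at most three elements. One direction is trivial (adjoin $\phi_0/\delta_0=1$). For the other, suppose $\{\phi_i/\delta_i\}_{i=0}^5$ has at most three elements; as $\{\phi_i/\delta_i:1\le i\le5\}$ has at least two, the only obstruction is that it might have exactly three, in which case $1=\phi_0/\delta_0$ must be one of them, i.e.\ $\phi_j=\delta_j$ for some $j\ge1$. I would rule this out. Because $(A,\b)$ is a table algebra, $N:=\{b_i\in\b:\phi_i=\delta_i\}$ is a closed subset: for $b_i,b_k\in N$ the identity $\phi(b_ib_k)=\delta(b_ib_k)$ reads $\sum_l\lambda_{ikl}(\delta_l-\phi_l)=0$, a sum of nonnegative terms (as $|\phi_l|\le\delta_l$), forcing $\delta_l=\phi_l$ whenever $\lambda_{ikl}>0$; also $b_0\in N$ and $N\ne\b$. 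Under the hypothesis, $N$ is proper with at least two elements, so $o(N):=\sum_{n\in N}\delta(n)\ge2$.

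Now consider the idempotent $e_N:=o(N)^{-1}\sum_{n\in N}n$ of the closed subset $N$. It is self-adjoint (since $N^*=N$), with $\delta(e_N)=1$ and $\phi(e_N)=o(N)^{-1}\sum_{n\in N}\phi(n)=o(N)^{-1}\sum_{n\in N}\delta(n)=1$; hence its image under the representation affording $\chi$ is a symmetric idempotent $2\times2$ matrix, whose trace $t:=\chi(e_N)$ equals its rank and so lies in $\{0,1,2\}$. On one hand $\tau(e_N)=o(N)^{-1}\sum_{n\in N}\tau(n)=o(N)^{-1}\tau(b_0)=n/o(N)$; on the other $\tau(e_N)=\delta(e_N)+m_\phi\phi(e_N)+m_\chi\chi(e_N)=1+m_\phi+tm_\chi$. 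Thus $o(N)=\dfrac{1+m_\phi+2m_\chi}{1+m_\phi+tm_\chi}$. For $t=2$ this is $1$, and for $t=1$ it lies strictly between $1$ and $2$ (using $m_\chi>0$), both contradicting $o(N)\ge2$. Hence $t=0$, i.e.\ $e_N=e_\delta+e_\phi=\tfrac1n\sum_i\bigl(1+m_\phi\tfrac{\phi_i}{\delta_i}\bigr)b_i$. Comparing $b_i$-coefficients, every $b_i\notin N$ satisfies $1+m_\phi\tfrac{\phi_i}{\delta_i}=0$, so $\phi_i/\delta_i=-1/m_\phi$, while every $b_i\in N$ has $\phi_i/\delta_i=1$; hence $\{\phi_i/\delta_i\}_{i=0}^5=\{1,-1/m_\phi\}$, contradicting the assumption that $\{\phi_i/\delta_i:1\le i\le5\}$ had three elements. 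Therefore $\{\phi_i/\delta_i:1\le i\le5\}$ has exactly two elements, and with the first paragraph this proves the proposition.

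The mechanical part is the affine-function computation, which converts ``$Z(A)$ a fusion subalgebra'' into a purely combinatorial statement about the values $\phi_i/\delta_i$. The real content --- and where the table-algebra hypothesis is used in an essential rather than cosmetic way --- is the last step: for a general reality-based algebra one can have three distinct ratios with $1$ among the non-trivial ones, and what excludes this here is the order identity $o(\ker_\b\phi)=(1+m_\phi+2m_\chi)/(1+m_\phi+m_\chi)$, which is forced by $|\phi_i|\le\delta_i$. The delicate points are computing $\chi(e_N)$ correctly and recognizing that, as the trace of a symmetric idempotent, it must equal a rank in $\{0,1,2\}$.
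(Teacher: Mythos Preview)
Your proof is correct, and its core idea---expressing the central idempotents in the basis $\b$ via \eqref{cpis} and observing that for $i\ge 1$ the $b_i$-coefficient of a general central element is an affine function of $\phi_i/\delta_i$---is the same as the paper's. The paper argues the forward direction just as you do (if $Z(A)$ has a fusion basis $\{b_0,b_I,b_J\}$ then $e_\phi$ lies in its span, so the ratios $\phi_i/\delta_i$ take at most three values), and for the converse it simply picks some $\alpha\neq 1$ in $\{\phi_i/\delta_i\}$, sets $I$ to be its level set in $\{1,\dots,5\}$, $J$ the complement, and asserts $Z(A)\subseteq\spa\{b_0,b_I,b_J\}$, concluding by dimension.

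Where you go further is in your second paragraph. The paper's converse tacitly needs $\phi_i/\delta_i$ to be constant on $J=\{1,\dots,5\}\setminus I$, which is automatic unless the three values on $\{0,\dots,5\}$ include $1$ with $1$ also occurring at some $j\ge 1$; the paper's short proof does not address this possibility. Your closed-subset argument with $e_N$---showing $\chi(e_N)\in\{0,1,2\}$ as the rank of a symmetric idempotent, ruling out $t=1,2$ via the identity $o(N)=n/(1+m_\phi+tm_\chi)$, and then concluding that the remaining case $t=0$ forces every non-$N$ ratio to equal $-1/m_\phi$---cleanly eliminates this edge case and makes essential use of the table-algebra hypothesis $|\phi_l|\le\delta_l$. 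One small point worth stating: your step ``$|N|\ge 2$, so $o(N)\ge 2$'' relies on $\delta_j\ge 1$ in a standard table algebra, which follows from applying $\delta$ to $b_jb_j^*=\delta_jb_0+\sum_{k\neq 0}\lambda_{jj^*k}b_k$ (nonnegativity of the structure constants gives $\delta_j^2\ge\delta_j$). With that noted, your argument is complete and in fact more careful than the paper's.
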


\begin{proof} 
If the center $Z(A)$ is a fusion subalgebra, then there exists a $^*$-invariant partition of $\{1,...,5\}$ into two classes, say $I$ and $J$, such that the elements $b_0,b_I:=\sum_{i\in I} b_i,b_J:=\sum_{j\in J} b_j$ form a table basis of $Z(A)$, as the dimension of $Z(A)$ is $3$.
On the other hand, the idempotents $e_\delta,e_\phi, e_\chi = b_0 - e_\delta - e_\phi$ form another basis of $Z(A)$. Therefore $e_\phi$ is a linear combination of $b_0$, $b_I$ and $b_J$, and hence $|\{\phi_i/\delta_i,|\,i=0,...,5\}|\leq 3$.  Assume now that $|\{\phi_i/\delta_i,|\,i=0,...,5\}|\leq 3$. Pick an element $\alpha\in\{\phi_i/\delta_i,|\,i=0,...,5\}$ distinct from $1$ and define $I:=\{i\in\{1,...,5\}\,|\,\phi_i/\delta_i = \alpha\}$. It follows from $\phi_4/\delta_4 =\phi_5/\delta_5$ that the set $I$ is $^*$-invariant.  As we have already seen before, $I$ is a proper subset of $\{1,...,5\}$. Define $J:=\{1,...,5\}\setminus I$. Then $Z(A)\subseteq \spa\{ b_0,b_I,b_J\}$. Comparing the dimensions we conclude that $Z(A) = \spa\{ b_0,b_I,b_J\}$.
\end{proof}

\begin{proposition}\label{260716b} Assume that $Z(A)$ is a fusion subalgebra. Let $I\cup J=\{1,2,...,5\}$ be the corresponding partition of the index set. Then for each non-empty proper subset $K\subset I$ such that $b_K^* = b_K$,
$\spa\{ b_0, b_K, b_{I \setminus K}, b_J \}$ is a commutative real fusion subalgebra of rank $4$, and its multiplicities are $1,m_\phi,m_\chi,m_\chi$.
\end{proposition}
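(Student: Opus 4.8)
The plan is to show directly that $C:=\spa\{b_0,b_K,b_{I\setminus K},b_J\}$ is a four-dimensional commutative subalgebra of $A$, and then read off its table-algebra structure and multiplicities from those of $(A,\mathbf{B})$. First I record what the hypothesis provides, via Proposition~\ref{180716b} and its proof: the center is $Z(A)=\spa\{b_0,b_I,b_J\}=\spa\{e_\delta,e_\phi,e_\chi\}$, the sets $I$ and $J$ are $^*$-invariant and nonempty, and in the matrix realization $A=\mathbb{R}\oplus\mathbb{R}\oplus M_2(\mathbb{R})$ of Section~\ref{s-exist} we have $e_\delta=(1,0,O_2)$, $e_\phi=(0,1,O_2)$, $e_\chi=(0,0,I_2)$. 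Since $b_I=b_K+b_{I\setminus K}$, we get $C\supseteq Z(A)$ and $C=\spa\{e_\delta,e_\phi,e_\chi,b_K\}$. Writing $b_K=(\delta(b_K),\phi(b_K),B_K)=\delta(b_K)e_\delta+\phi(b_K)e_\phi+u$ with $u:=(0,0,B_K)=e_\chi b_K\in C$, this becomes $C=\spa\{e_\delta,e_\phi,e_\chi,u\}$. Because $K$ is a nonempty proper subset of $I$, the partial sums $b_0,b_K,b_{I\setminus K},b_J$ are four linearly independent vectors not all lying in the $3$-dimensional space $Z(A)$; in particular $b_K\notin Z(A)$, so $B_K$ is not a scalar matrix and $\dim C=4$.

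Next, the crux: $C$ is closed under multiplication. The products among the orthogonal central idempotents $e_\delta,e_\phi,e_\chi$ are trivial; $e_\delta u=e_\phi u=0$ and $e_\chi u=u$; and by the Cayley--Hamilton identity in $M_2(\mathbb{R})$,
$$u^2=(0,0,B_K^2)=\tr(B_K)\,u-\det(B_K)\,e_\chi\in C.$$
Hence $C$ is a subalgebra, and it is commutative since $e_\delta,e_\phi,e_\chi$ are central in $A$ and $u$ commutes with itself. To see that $C$ is a rank-$4$ real fusion subalgebra with basis $\mathbf{D}:=\{b_0,b_K,b_{I\setminus K},b_J\}$: the index sets $\{0\},K,I\setminus K,J$ partition $\{0,\dots,5\}$ and each is $^*$-invariant (using $I^*=I$ and $K^*=K$). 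Since $\mathbf{B}$ is linearly independent and $C$ is a subalgebra, for any two parts $S,S'$ the product $b_Sb_{S'}=\sum_{i\in S,\,j\in S'}b_ib_j$, expanded in $\mathbf{B}$, has coefficients constant on each part, and these coefficients are sums of the nonnegative $\lambda_{ijk}$, hence nonnegative. Finally, since each part is $^*$-invariant, axioms (iv)--(v) of the RBA definition give that the $b_0$-coefficient of $b_Sb_{S'}$ equals $\sum_{i\in S,\,i^*\in S'}\delta_i$, which is $\delta(b_S)=\sum_{i\in S}\delta_i>0$ when $S=S'$ and $0$ otherwise. Thus $\mathbf{D}$ is a standard RBA$^\delta$-basis of $C$ with nonnegative structure constants, i.e.\ $C$ is a commutative table algebra of rank $4$ and a fusion subalgebra of $(A,\mathbf{B})$; its degree map is $b_S\mapsto\delta(b_S)$ and its order is $\sum_{i=0}^5\delta_i=n$.

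It remains to compute the multiplicities of $C$. Being an RBA, $C$ is semisimple, and the orthogonal idempotents $e_\delta,e_\phi,e_\chi$ decompose it as $C=\mathbb{R}e_\delta\oplus\mathbb{R}e_\phi\oplus Ce_\chi$ with $Ce_\chi=\spa\{e_\chi,u\}$ a $2$-dimensional commutative semisimple algebra; over $\mathbb{C}$ this block splits into two distinct one-dimensional characters $\theta_1,\theta_2$, and since $\chi$ restricted to $Ae_\chi\cong M_2(\mathbb{R})$ is the trace, $\chi|_C=\theta_1+\theta_2$. The standard feasible trace of $C$ agrees with the restriction of $\tau$ (both send $\sum_S\alpha_Sb_S$ to $n\alpha_0$), so
$$\tau|_C=\delta|_C+m_\phi\,\phi|_C+m_\chi\,\chi|_C=\delta|_C+m_\phi\,\phi|_C+m_\chi\,\theta_1+m_\chi\,\theta_2,$$
and since $\{\delta|_C,\phi|_C,\theta_1,\theta_2\}=\irr(C)$, this exhibits the multiplicities as $1,m_\phi,m_\chi,m_\chi$. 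The step I expect to be the main obstacle is the passage from ``$C$ is a subalgebra'' to ``$\mathbf{D}$ is a genuine fusion/table basis with the asserted structure constants,'' together with the companion point that the $e_\chi$-block contributes two characters \emph{each} of multiplicity exactly $m_\chi$ rather than a single character of higher multiplicity; both are resolved by semisimplicity of $C$ and the trace-restriction identity above.
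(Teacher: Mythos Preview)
Your proof is correct and complete; the verification of the fusion/table-algebra axioms and of the multiplicities via $\tau|_C$ is more explicit than the paper's, which simply asserts these once $W$ is known to be a $4$-dimensional commutative subalgebra.

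The one genuine difference is in the mechanism for showing closure. The paper argues: since $b_I$ and $b_J$ are central, the elements $b_K,b_{I\setminus K},b_J$ pairwise commute (for $b_K$ and $b_{I\setminus K}$ this uses $[b_K,b_{I\setminus K}]=[b_K,b_I-b_K]=0$), hence the subalgebra they generate is commutative; but every commutative subalgebra of $\mathbb{C}\oplus\mathbb{C}\oplus M_2(\mathbb{C})$ has dimension at most $4$, so this subalgebra coincides with the span. You instead rewrite $C=\spa\{e_\delta,e_\phi,e_\chi,u\}$ and invoke Cayley--Hamilton to get $u^2=\tr(B_K)u-\det(B_K)e_\chi\in C$ directly. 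The paper's dimension-count is more structural and would transfer to any rank-$6$ RBA without fixing a matrix model; your Cayley--Hamilton step is more explicit and, as a bonus, makes the splitting $\chi|_C=\theta_1+\theta_2$ into two \emph{distinct} linear characters immediate (the eigenvalues of the real symmetric non-scalar matrix $B_K$ are distinct), whereas the paper simply asserts this splitting.
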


\begin{proof} 
Since $Z(A)=\spa\{ b_0, b_I,b_J\}$, the elements $b_K,b_{I\setminus K}, b_J$
commutes with each other under the multiplication of $A$. Thus, the subalgebra
$W:=\langle b_0,b_K,b_{I\setminus K}, b_J\rangle$ generated by 
$b_0,b_K,b_{I\setminus K}, b_J$ is a commutative subalgebra of dimension at least $4$. But any commutative subalgebra of $A$ has dimension at most four.  Therefore $W = \spa\{ b_0, b_K, b_{I \setminus K}, b_J \}$. 
It follows from the assumption that every basis element of $W$ is real.  
Thus $W$ is a commutative
real fusion subalgebra of $A$, as desired. Also $W$ is a \ta.
The restriction of $\phi$ to $W$ must remain irreducible, and the restriction
of the nonlinear irreducible character $\chi$ of $A$ to $W$ must produce two distinct irreducibles.  The  restriction of the standard feasible trace of $A$ to $W$ will be the standard feasible trace of $W$, and its irreducible constituents will occur with the multiplicities $1,m_\phi,m_\chi,m_\chi$.
\end{proof}

All parameter sets of primitive integral table algebras described in Table 1 satisfy the assumptions of Proposition \ref{180716b}. Therefore their centers are  fusion subalgebras of rank three.  If $n=81$, then the center has degrees $1,20,60$. According to A.~Brouwer's data~\cite{brouwer} there exists a unique strongly regular graph with these parameters. By Proposition \ref{260716b} there exists a real fusion subalgebra of rank $4$ with degrees $1,20,20,40$ and multiplicites $1,20,30,30$. According to the results of E.~van Dam such a scheme does not exist \cite[pg.~90]{vandam}. 
For $n=96$ there are two parameter sets. As table algebras,
their centers have the same degrees $1,38,57$. According to A. Brouwer's data~\cite{brouwer} such a strongly regular graph does not exist.  So this parameter set cannot be realized as a Bose-Mesner algebra of an association scheme.  In the last case $n=120$, the center has degrees $1,34,85$. According Brouwer's table \cite{brouwer} it is not known whether there exists a strongly regular graph with these parameters.   However, when we compute our structure constants we find that $\lambda_{414}=7$ and $\delta_4=51$.  We claim that in an association scheme the value of $\lambda_{iji}\delta_i$ should be even when $b_j$ is $*$-symmetric.  (The second author believes that this is well-known.  As we were unable to find a reference we provide an argument for this below.)  So from our noncommutative rank $6$ table algebra classification, it follows that there is no such association scheme. 

\begin{lemma}
Let $s_i$ and $s_j \ne 1_X$ be distinct relations in an association scheme $(X,S)$.  Let $\lambda_{ijk}$ be the intersection numbers of $(X,S)$.  If $s_j$ is symmetric, and the valency $\delta_i$ of $s_i$ is greater than $1$, then $\lambda_{iji}\delta_i$ must be even. 
\end{lemma}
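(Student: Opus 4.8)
The plan is a short double-counting argument carried out at a single vertex, using the symmetry of $s_j$ to force a parity. Fix $x \in X$ and write $s_i(x) = \{y \in X : (x,y) \in s_i\}$, so that $|s_i(x)| = \delta_i$. I would first recall the combinatorial meaning of the intersection numbers in the convention of this paper: for any pair $(x,y) \in s_i$, the number $\lambda_{iji}$ counts the elements $z \in X$ with $(x,z) \in s_i$ and $(z,y) \in s_j$; equivalently $\lambda_{iji} = |\{z \in s_i(x) : (z,y) \in s_j\}|$, and this value does not depend on which pair $(x,y)\in s_i$ is chosen.

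Next I would count the set $T_x := \{(y,z) \in s_i(x) \times s_i(x) : (z,y) \in s_j\}$ by summing over its first coordinate. For each $y \in s_i(x)$ we have $(x,y) \in s_i$, so by the description above the number of admissible $z$ is exactly $\lambda_{iji}$, independent of $y$. Hence $|T_x| = \delta_i\,\lambda_{iji}$.

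Then I would invoke the hypothesis that $s_j$ is symmetric: the map $(y,z) \mapsto (z,y)$ carries $T_x$ to itself, since $(z,y) \in s_j \Leftrightarrow (y,z) \in s_j$. Because $s_j \ne 1_X$ and in an association scheme $1_X = s_0$ is the only relation containing a reflexive pair, $(z,y) \in s_j$ forces $y \ne z$; thus this involution on $T_x$ is fixed-point-free, so $T_x$ partitions into $2$-element orbits. Therefore $|T_x| = \delta_i \lambda_{iji}$ is even, which is the assertion.

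The argument has essentially no obstacle; the only points requiring care are getting the order of subscripts in $\lambda_{iji}$ correct (so that the count at a fixed $x$ is genuinely independent of the chosen endpoint $y$) and observing that it is the condition $s_j \neq 1_X$, rather than $\delta_i > 1$, that delivers fixed-point-freeness of the involution. In fact the stated hypothesis $\delta_i > 1$ is not needed: when $\delta_i = 1$ one has $\lambda_{iji} = 0$ directly, since the unique element of $s_i(x)$ cannot be $s_j$-related to itself. Including it does no harm, so I would simply keep the statement as given.
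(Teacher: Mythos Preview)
Your argument is correct and is essentially the paper's own proof: the paper also fixes $x$, considers the graph on $xs_i$ induced by $s_j$, observes it is undirected (since $s_j$ is symmetric) and $\lambda_{iji}$-regular, and concludes that the number of edge-endpoints $\delta_i\lambda_{iji}$ is even. Your observation that the hypothesis $\delta_i>1$ is superfluous (with $\lambda_{iji}=0$ automatically when $\delta_i=1$) is a nice bonus that the paper does not make explicit.
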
 

\begin{proof} 
Let $x \in X$.  Let $xs_i = \{y \in X : (x,y) \in s_i\}$ be the $s_i$-neighborhood of $x$.  Consider the graph $\Gamma_{i,j}$ on $xs_i$ that is induced by the relation $s_j$.  Then $\Gamma_{i,j}$ is a graph on $|xs_i|=\delta_i > 1$ points, and $(y,z) \in E(\Gamma_{i,j})$ if and only if
  $(x,y), (x,z) \in s_i$ and $(y,z) \in s_j$ . Since $s_j$ is symmetric, the graph $\Gamma_{i,j}$ is an ordinary undirected graph.  Furthermore, $\Gamma_{i,j}$ is regular of valency $\lambda_{iji}$ because for every $y \in xs_i$, 
$$|\{z \in xs_i : (y,z) \in s_j \}| = |\{z \in xs_i : (z,y) \in s_j\}| = \lambda_{iji}. $$
It is easy to see that the usual adjacency matrix of a $k$-regular graph on $n > 1$ points has $nk$ entries equal to one, and the fact that the graph is symmetric implies that this number must be even.  Applying this to $\Gamma_{i,j}$ we deduce that $\delta_i \lambda_{iji}$ must be even. 
\end{proof}  

\medskip
The authors also want to mention that A.~Munemasa, after hearing a preliminary version of these results, searched primitive permutation groups of small degrees and found that there is no primitive noncommutative Schurian scheme of rank 6 with orders less than 1600.
  
\section{Noncommutative rank $6$ TAs with $m_{\phi}=1$ } 

\bigskip
With the notation of Section 3, assume $\b = \{(\delta_i,\phi_i,B_i), i=0, \dots, 5\}$ is a standard table basis for a noncommutative rank $6$ RBA with $m_{\phi}=1$.   From our formulas (see Corollary~\ref{chartable}) we see that this extra condition $m_{\phi}=1$
implies $n = \sum_i (\phi_i^2 /\delta_i)$ and $m_{\chi} = n/2-1$.  

The purpose of this section is to classify all noncommutative rank $6$ table algebras that have $m_{\phi}=1$.  We show that they separate nicely into three families.  We show that one of these families is never realized by an association scheme, and give integrality conditions for the two of these families that can be realized by association schemes. 
 
Recall that the kernel of $\phi$ is $\ker \phi := \{ b_i \in \mathbf{B} : \phi(b_i) = \delta_i\phi(b_0) \}$. For any table algebra $(A,\mathbf{B})$ and any irreducible character $\xi$ of $A$, $\ker \xi$ is a closed subset of $\b$ (see \cite[Theorem 4.2]{x08}) and $m_\xi \ge \xi(1)$ (see \cite[Theorem 1.1]{x}).  The first condition can fail for noncommutative rank $6$ RBAs with negative structure constants.  For example,  there is a noncommutative rational rank $6$ RBA of order $6$ with linear character values $[\delta,\phi] = [(1,1,1,1),(1,-2,0,0)]$ for which $\ker \phi = \{b_0,b_1\}$ is not  a closed subset of $\b$, because 
$$ b_1^2 = b_0 - \frac12 b_1 -\frac14 b_2 + \frac14 (b_3 +b_4 +b_5). $$
The extra arithmetic properties satisfied by integral table algebras will be essential to this classification.  All the properties of integral table algebras used here  can be found in \cite{B09} or \cite{x11}.   We write $o(\bn) = \sum\limits_{b_i \in \bn} \delta_i$ for the order of a closed subset $\bn$ of $\b$. 

\begin{lemma}
\label{l-phi}
Let $(A,\b)$ be a noncommutative rank $6$ table algebra with $m_{\phi}=1$.  
Let $\mathbf{N}$ be the kernel of $\phi$.  Then the following hold.
\begin{enumerate}
\item[$(i)$] If $b_i\in\mathbf{N}, i\neq 0$, then $\phi_i = \delta_i$ and $\chi_i = -\frac{2\delta_i}{m_\chi}$.

\item[$(ii)$] If $b_i\not \in\mathbf{N}$, then $\phi_i = - \delta_i$ and $\chi_i = 0$.

\item[$(iii)$] Let $b_i b_j = \sum_{k=0}^5 \l_{ijk} b_k$.

If $\phi_i \phi_j > 0$, then $\l_{ijk} = 0$ for any $k$ with $\phi_k < 0$.

If $\phi_i \phi_j < 0$, then $\l_{ijk} = 0$ for any $k$ with $\phi_k > 0$.

\item[$(iv)$] The quotient table basis $\b/\!\!/\bn$ is an abelian group of order $2$.
\end{enumerate}
\end{lemma}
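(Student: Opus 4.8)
The plan is to extract all four parts from a single rigidity fact. Since $m_\phi=1$, Higman's multiplicity formula (as already recorded at the start of this section) reads $\sum_{i=0}^5\phi_i^2/\delta_i = n/m_\phi = n = \sum_{i=0}^5\delta_i$, while for a standard table algebra one has $|\phi_i|\le\delta_i$ for every $i$ (the bound recorded after Corollary~\ref{chartable}). Comparing the two sums term by term forces $\phi_i^2=\delta_i^2$, i.e.\ $\phi_i=\pm\delta_i$ for all $i$. This dichotomy (together with $\delta_i>0$, so that $\phi_i$ is never $0$) is the engine of the whole lemma; the rest is bookkeeping.

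Parts $(i)$ and $(ii)$ follow at once. By definition $\mathbf{N}=\ker\phi=\{b_i:\phi_i=\delta_i\}$, so for $i\ne 0$ we get $\phi_i=\delta_i$ when $b_i\in\mathbf{N}$ and, by the dichotomy, $\phi_i=-\delta_i$ when $b_i\notin\mathbf{N}$. In either case $\chi_i$ is read off from $\chi_i=(-\delta_i-m_\phi\phi_i)/m_\chi=(-\delta_i-\phi_i)/m_\chi$ (Corollary~\ref{chartable}, using $m_\phi=1$): it equals $-2\delta_i/m_\chi$ when $\phi_i=\delta_i$ and $0$ when $\phi_i=-\delta_i$. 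For $(iii)$ I would use that $\phi$ is an algebra homomorphism, so $\phi_i\phi_j=\phi(b_ib_j)=\sum_k\lambda_{ijk}\phi_k$, alongside the degree relation $\delta_i\delta_j=\sum_k\lambda_{ijk}\delta_k$. Splitting the index set by the sign of $\phi_k$ and putting $S^+=\sum_{\phi_k>0}\lambda_{ijk}\delta_k$, $S^-=\sum_{\phi_k<0}\lambda_{ijk}\delta_k$ (both $\ge 0$ since $\b$ is a table algebra), one gets $\delta_i\delta_j=S^++S^-$ and $\phi_i\phi_j=S^+-S^-$. Since $|\phi_i\phi_j|=\delta_i\delta_j$, the hypothesis $\phi_i\phi_j>0$ forces $S^-=0$ and $\phi_i\phi_j<0$ forces $S^+=0$; nonnegativity of the individual summands then kills every $\lambda_{ijk}$ in the offending sign class. (The cases $i=0$ or $j=0$ are trivial.)

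For $(iv)$, part $(iii)$ already shows, at the level of supports, that $\mathbf{N}\cdot\mathbf{N}\subseteq\mathbf{N}$, $\mathbf{N}\cdot(\b\setminus\mathbf{N})\subseteq\b\setminus\mathbf{N}$, and $(\b\setminus\mathbf{N})\cdot(\b\setminus\mathbf{N})\subseteq\mathbf{N}$. I would first dispose of the degenerate cases: $\mathbf{N}\ne\b$ because $\phi\ne\delta$, and $\mathbf{N}\ne\{b_0\}$ because otherwise $(ii)$ makes $\phi_i=-\delta_i$ for every $i\ge1$, whence $1+\phi_1+\phi_2+\phi_3+2\phi_4=0$ gives $\delta_1+\delta_2+\delta_3+2\delta_4=1$ and $n=2$, impossible for a rank $6$ integral table algebra (its order is $\ge 6$). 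Thus $\mathbf{N}$ is a proper nontrivial closed subset, and $\b/\!\!/\mathbf{N}$ is a table algebra whose rank is the number of $\mathbf{N}$-cosets, so $2\le\rank(\b/\!\!/\mathbf{N})\le 5$. Its irreducible characters lie among $\delta,\phi,\chi$; the linear ones $\delta,\phi$ always descend and contribute $1+1$ to $\rank(\b/\!\!/\mathbf{N})=\sum_\xi\xi(1)^2$, and if $\chi$ also descended the rank would be $6$, contradicting $\mathbf{N}\ne\{b_0\}$. Hence $\rank(\b/\!\!/\mathbf{N})=2$, so $\b\setminus\mathbf{N}$ is a single $\mathbf{N}$-coset $\bar b_1$; since a rank $2$ standard table algebra satisfies $\bar b_1^2=o(\bar b_1)\bar b_0+(o(\bar b_1)-1)\bar b_1$ and $(\b\setminus\mathbf{N})^2\subseteq\mathbf{N}$ forces the $\bar b_1$-coefficient to vanish, we get $o(\bar b_1)=1$, i.e.\ $\b/\!\!/\mathbf{N}\cong C_2$.

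There is no deep obstacle here: the real content is the opening step, where $m_\phi=1$ makes Higman's identity tight and pins $\phi_i$ to $\pm\delta_i$. The one place needing care is $(iv)$, where one must keep track that the quotient genuinely collapses to rank $2$ (equivalently, that $\chi$ does not descend) and that neither degenerate subset $\mathbf{N}=\b$ nor $\mathbf{N}=\{b_0\}$ can occur; the exclusion of $\mathbf{N}=\{b_0\}$ is where integrality enters.
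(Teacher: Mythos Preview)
Your argument for parts $(i)$--$(iii)$ is essentially identical to the paper's: the same tightness argument from $\sum_i\phi_i^2/\delta_i=n$ and $|\phi_i|\le\delta_i$, and the same add/subtract trick with the $\delta$- and $\phi$-images of $b_ib_j$.

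For $(iv)$ your route is more elaborate than the paper's. The paper simply observes that for $b_i,b_j\notin\mathbf{N}$ one has $(b_i/\!\!/\mathbf{N})(b_j/\!\!/\mathbf{N})=b_0/\!\!/\mathbf{N}$ by $(iii)$, which immediately gives a group of order $2$. Your detour through the irreducible characters of the quotient and the rank formula $\sum_\xi\xi(1)^2$ is valid and reaches the same conclusion, but is unnecessary given the support information already in hand.

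There is one genuine slip. The lemma does \emph{not} assume integrality, so your justification ``impossible for a rank $6$ integral table algebra (its order is $\ge 6$)'' and your closing remark ``the exclusion of $\mathbf{N}=\{b_0\}$ is where integrality enters'' are misplaced. The correct way to rule out $n=2$ is that it forces $m_\chi=(n-1-m_\phi)/2=0$, contradicting $m_\chi>0$. Alternatively (and more in the spirit of the paper's short argument), if $\mathbf{N}=\{b_0\}$ then $(iii)$ gives $b_1b_2\in\mathbb{R}b_0$, while $\lambda_{120}=0$ since $b_2\ne b_1^*$, so $b_1b_2=0$; but $\delta(b_1b_2)=\delta_1\delta_2>0$. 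Either way, integrality is not needed.
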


\begin{proof}
(i) follows from $\tau = \delta + \phi + m_\chi \chi$, where $\tau$ is the standard feasible trace of \ab.

(ii) It follows from
$m_\phi=1$ that $\sum_i (\phi_i^2/\delta_i) = n =\sum_i \delta_i$. But $\abs{\phi_i} \leq \delta_i$
for all $i$. Hence, we must have that $\phi_i=\pm \delta_i$, $0 \le i \le 5$. Therefore,
 $\phi_i = \delta_i$ if $b_i\in\mathbf{N}$, and $\phi_i = -\delta_i$ if $b_i\not\in\mathbf{N}$. 
Furthermore, since $0 = \tau(b_i)=\delta_i + \phi_i + m_\chi \chi_i$ for any $1 \le i
\le 5$, we see
that $\chi_i = 0$ for any $b_i\not\in\mathbf{N}$. So (ii) holds.

(iii) Applying $\delta$ and $\phi$ to the equation $b_i b_j = \sum_{k=0}^5 \l_{ijk} b_k$, we get that
\begin{equation}
\label{e-bibj-d}
\delta_i \delta_j = \sum_{k=0}^5 \l_{ijk} \delta_k \qquad \hbox{and} \qquad \phi_i \phi_j = \sum_{k=0}^5 \l_{ijk} \phi_k.
\end{equation}
If $\phi_i \phi_j > 0$, then by subtracting the second equation from the first equation in \eqref{e-bibj-d}, we see that $\sum_{\phi_k < 0} \l_{ijk} \delta_k = 0$ by (i) and (ii).   Since each $\delta_k > 0$ and $\l_{ijk} \ge 0$, we must have that $\l_{ijk} = 0$ for any $k$ with $\phi_k < 0$. But if $\phi_i \phi_j < 0$, then by adding the two equations in \eqref{e-bibj-d} together, we see that $\l_{ijk} = 0$ for any $k$ with $\phi_k > 0$. So (iii) holds. 

(iv) For any $b_i, b_j \notin \bn$, $(b_i/\!\!/\bn)(b_j/\!\!/\bn) = b_0/\!\!/\bn$ by (ii) and (iii). So (iv) holds.
\end{proof}

\begin{lemma}\label{150516a} 
Let $(A,\b)$ be a noncommutative rank $6$ table algebra with $m_{\phi}=1$, and let $\bn = \ker \phi$.  Then $o(\mathbf{N})=n/2$ and $|\mathbf{N}|=3$ or $5$.  If $|\mathbf{N}|=5$, then $\b = (\b/\!\!/\mathbf{N})\wr\mathbf{N}$. 
\end{lemma}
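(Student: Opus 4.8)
My plan is to prove the three assertions in turn, working with $\bn=\ker\phi$ through the character theory of Section~2 and the matrix realization of Section~3, and freely using Lemma~\ref{l-phi}; write $\bn^+:=\sum_{b_i\in\bn}b_i$. The equality $o(\bn)=n/2$ is the easy part: by \eqref{cpis} with $m_\delta=m_\phi=1$, together with $\phi_i=-\delta_i$ for $b_i\notin\bn$ (Lemma~\ref{l-phi}(ii)), one gets $e_\delta+e_\phi=\tfrac2n\bn^+$; this is an idempotent, and since $\bn$ is a closed subset one has $(\bn^+)^2=o(\bn)\bn^+$, so $o(\bn)=n/2$. (Equivalently, $\b/\!\!/\bn\cong C_2$ has index $2$ and orders are multiplicative over closed subsets.)

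Next I would pin down $|\bn|$. Clearly $2\le|\bn|\le5$, since $\bn\ne\{b_0\}$ ($\phi\ne\delta$) and $\bn\ne\b$ ($\b/\!\!/\bn$ has rank $2$). Now $\C\bn$ is a semisimple subalgebra of $A\cong\C\oplus\C\oplus M_2(\C)$ carrying the one-dimensional representation $\delta|_\bn$. If $\C\bn$ is noncommutative it must contain a matrix block of dimension at least $4$, so $\dim\C\bn\ge5$, forcing $\C\bn\cong\C\oplus M_2(\C)$ and $|\bn|=5$. If $\C\bn$ is commutative, then comparing the restriction to $\C\bn$ of the standard feasible trace of $\b$ with the standard feasible trace of $\bn$ shows that every irreducible character of $\bn$ appears in $\psi|_\bn$ for some $\psi\in\irr(\b)$; as $\delta|_\bn=\phi|_\bn$ is the principal character, every non-principal character of $\bn$ must appear in $\chi|_\bn$, which has degree $2$ and decomposes into at most two linear characters. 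Hence $\bn$ has at most two non-principal characters, i.e.\ $|\bn|=|\irr(\bn)|\le3$. Thus $|\bn|\in\{2,3,5\}$.

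The hard part will be ruling out $|\bn|=2$. Assume $\bn=\{b_0,b_1\}$ (so $b_1$ is ${}^*$-symmetric, $\bn$ being ${}^*$-invariant), and set $C:=\b\setminus\bn=\{b_2,b_3,b_4,b_5\}$, so that $o(C)=n/2$, i.e.\ $\delta_2+\delta_3+2\delta_4=n/2=1+\delta_1$. By Lemma~\ref{l-phi}, $\phi_i=-\delta_i$ and $\chi_i=0$ for $i=2,3,4,5$, and $\chi_1=-2\delta_1/m_\chi<0$; moreover $\C\bn\cong\C^2$, whose non-principal character $\psi$ has $\psi(b_1)=-1$ (from $b_1^2=\delta_1b_0+(\delta_1-1)b_1$). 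As in the previous paragraph $\psi$ occurs in $\chi|_\bn$; the alternative $\chi|_\bn=\delta|_\bn+\psi$ would give $\chi_1=\delta_1-1\ge0$, a contradiction, so $\chi|_\bn=2\psi$, and hence in the realization of Section~3 we have $B_1=\Delta(b_1)=-I_2$. Since $\phi_i\phi_j>0$ for $i,j\in\{2,3,4,5\}$, Lemma~\ref{l-phi}(iii) forces $b_ib_j\in\spa\{b_0,b_1\}$, so each product $B_iB_j$ is a scalar matrix. Here $B_2,B_3$ are symmetric of trace $0$; from $b_2^2=\delta_2b_0+\lambda_{221}b_1$ one reads off $\lambda_{221}=\delta_2(\delta_2-1)/\delta_1$ by applying $\delta$, and then $\Delta$ shows $B_2^2$ is the positive scalar matrix $(\delta_2(\delta_1-\delta_2+1)/\delta_1)\,I_2$, so $B_2\ne0$; hence $B_2B_3$ scalar forces $B_3=tB_2$ with $t=-\delta_3/(\delta_1-\delta_2+1)$ (using $b_2b_3=\lambda_{231}b_1$, $\lambda_{231}=\delta_2\delta_3/\delta_1$, $\lambda_{230}=0$). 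Finally, computing $\lambda_{331}$ in $b_3^2=\delta_3b_0+\lambda_{331}b_1$ both via $\delta$ and via $\Delta$, and using $B_3^2=t^2B_2^2$, reduces to $(\delta_1-\delta_2+1)(\delta_1-\delta_3+1)=\delta_2\delta_3$, i.e.\ $\delta_2+\delta_3=\delta_1+1=n/2$; together with $\delta_2+\delta_3+2\delta_4=n/2$ this forces $\delta_4=0$, a contradiction. Hence $|\bn|\in\{3,5\}$.

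Finally, suppose $|\bn|=5$. Then $\b\setminus\bn$ consists of a single element, necessarily ${}^*$-symmetric (otherwise its ${}^*$-image lies outside $\bn$ too); call it $b_3$ after renumbering, so $\bn=\{b_0,b_1,b_2,b_4,b_5\}$ and $\delta_3=o(\bn)=n/2$. For $b_i\in\bn$ with $i\ne0$ we have $\phi_i\phi_3<0$, so Lemma~\ref{l-phi}(iii) gives $b_ib_3,b_3b_i\in\spa\{b_3\}$, and the $\delta$-degrees force $b_ib_3=b_3b_i=\delta_ib_3$; similarly $b_3^2\in\C\bn$, and since $\langle b_3,b_3\rangle=n\delta_3$ one computes via \eqref{scs1} that $\lambda_{33k}=\delta_3$ for every $b_k\in\bn$, so $b_3^2=\delta_3\bn^+$. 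These relations --- $b_3$ acting as the scalar $\delta_i$ on each $b_i\in\bn$ on both sides, and $b_3^2=\delta_3\bn^+$ --- are precisely the defining structure constants of the wreath product $(\b/\!\!/\bn)\wr\bn$ (cf.\ \cite[Definition~1.2]{BX}), so $\b=(\b/\!\!/\bn)\wr\bn$. Of the four steps, only the elimination of $|\bn|=2$ requires genuine computation; the rest are brief.
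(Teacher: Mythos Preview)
Your argument is essentially correct, but it diverges from the paper's proof in two places, and contains one small slip.

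\textbf{Where you differ.} For $|\bn|=2$, the paper avoids the matrix realization entirely: with $\bn=\{b_0,b_2\}$, any $b_j,b_k\notin\bn$ satisfy $b_jb_k\in\spa\{b_0,b_2\}$ by Lemma~\ref{l-phi}(iii), and comparing $\delta_j\delta_k=\lambda_{jk0}+\lambda_{jk2}\delta_2$ with the same for $kj$, together with $\lambda_{jk0}=\lambda_{kj0}$, gives $b_jb_k=b_kb_j$. Since $\bn$ is strongly normal of rank~$2$ it lies in $Z(A)$, so $A$ is commutative, a contradiction. This is a two-line argument, whereas your route through $B_1=-I_2$, $B_3=tB_2$, and the identity $(\delta_1-\delta_2+1)(\delta_1-\delta_3+1)=\delta_2\delta_3$ leading to $\delta_4=0$ is correct but considerably longer. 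For $|\bn|=5$, the paper simply observes $|\b|=|\b/\!\!/\bn|+|\bn|-1$ and cites \cite[Lemma~3.1]{x11}; you instead verify the wreath-product structure constants by hand via $b_ib_3=\delta_ib_3$ and $b_3^2=\delta_3\bn^+$. Both approaches are fine; yours is self-contained, the paper's is shorter.

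\textbf{The slip.} In the $|\bn|=2$ step you write that $\chi|_\bn=\delta|_\bn+\psi$ would give $\chi_1=\delta_1-1\ge0$. The inequality $\delta_1\ge1$ is not given for a general table algebra. The fix is immediate: since $o(\bn)=1+\delta_1=n/2$ and $m_\chi=(n-2)/2$, one has $m_\chi=\delta_1$, hence $\chi_1=-2\delta_1/m_\chi=-2$; then $\delta_1-1=-2$ would force $\delta_1=-1<0$, which is the actual contradiction. Your conclusion $B_1=-I_2$ is therefore correct, just for a slightly different reason than stated.
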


\begin{proof} 
Since $o(\b/\!\!/\mathbf{N})=2$ by Lemma \ref{l-phi}(iv), and $o(\b/\!\!/\mathbf{N}) = o(\b)/ o(\bn)$, it follows that $o(\mathbf{N})=n/2$. 

Now $\b/\!\!/\bn$ an abelian group of order $2$ implies that $\abs{\bn} \ne 1$. If $|\mathbf{N}|=2$, without loss of generality, we may assume that $\mathbf{N}=\{b_0,b_2\}$. Pick up two arbitrary elements $b_j,b_k\in\b\setminus\mathbf{N}$. By Lemma~\ref{l-phi}(iii), the products $b_j b_k$ and $b_k b_j$ are linear combinations of $b_0$ and  $b_2$. Now it follows from $\lambda_{jk0}=\lambda_{kj0}$ and 
$$
\lambda_{kj0} +\lambda_{kj2}\delta_2 = \delta_j\delta_k =  \lambda_{jk0} +\lambda_{jk2}\delta_2
$$
that $b_j$ and $b_k$ commute.  Thus any two elements of the set $\b\setminus\mathbf{N}$ commute. Since $\b/\!\!/\bn$ is an abelian group, $\bn$ is a (strongly) normal closed subset of $\b$. So $\abs{\bn} = 2$ yields that $\bn$ is in the center $Z(A)$ of $A$. Hence $A$ is commutative, a contradiction. 

In the following we show that $\abs{\bn} \ne 4$.  For any function $f$ on $\b$, let $f|_\bn$ be the restriction of $f$ on $\bn$.  Then $(o(\bn)/o(\b))\tau|_\bn$ is the standard feasible trace of $\bn$, and 
$$
\frac{o(\bn)}{o(\b)} \tau|_\bn = \frac{1}{2} \big(\delta|_\bn + \phi|_\bn + m_\chi \chi|_\bn \big) = \delta|_\bn + \frac{n-2}{4} \chi|_\bn.
$$
So every non-principal irreducible character of $\bn$ is a constituent of $\chi|_\bn$. Since the degree of $\chi$ is $2$, $\chi|_\bn$ is either irreducible or the sum of two (not necessarily distinct) irreducible characters of $\bn$. Thus, $\bn$ has at most three distinct irreducible characters. So $\abs{\bn} \ne 4$.

Therefore, $\abs{\bn} = 3$ or $5$. If $\abs{\bn} = 5$, then $\abs{\b} = \abs{\b/\!\!/\bn} + \abs{\bn} - 1$, and hence $\b \cong_x
(\b/\!\!/\bn) \wr \bn$ by \cite[Lemma 3.1]{x11}.
\end{proof}

In keeping with our convention from Section 3, we will assume $\phi_3/\delta_3 \ne \phi_4/\delta_4$, so $b_3$ and $b_4$ lie in distinct cosets of $\bn = \ker \phi$. 
From Lemma \ref{150516a} we see that noncommutative rank $6$ table algebras with $m_{\phi}=1$ occur in three families.   The first family where $|\bn|=5$ is the wreath product of $\b/\!\!/\bn = \{\bar{b}_0, \bar{b}_3\}$ with a noncommutative rank $5$ table algebra $\bn = \{b_0,b_1,b_2,b_4,b_5\}$.   Since noncommutative rank $5$ RBAs cannot be integral by \cite[Theorem 6]{HMX}, and $\bn$ is a closed subset of $(\b/\!\!/\bn) \wr \bn$, it is impossible for members of this family to be integral table algebras.  In particular, members of this family are never realized by association schemes. 

When $|\bn|=3$, there are two families, both of which we refer to as {\it bipartite}, since the map $\phi$ separates both the set $\b$ and the ``vertices'' into two halves.  The real bipartite family has $\bn = \{b_0,b_2,b_3\}$ up to a choice of the ordering of $b_1$ and $b_2$, and the non-real bipartite family has $\bn = \{b_0,b_4,b_5\}$.   

The next results give integrality conditions for the two families of bipartite rank $6$ table algebras.  Let $\Delta$ be a $2 \times 2$ real representation affording the character $\chi$, and write $\Delta(b_i)=B_i = \begin{bmatrix} r_i & s_i \\ s_i & u_i \end{bmatrix}$ for $i=0,\dots,3$, $B_4 = \begin{bmatrix} r_4 & s_4 \\ t_4 & u_4 \end{bmatrix}$, and $B_5=B_4^{\top}$.   As $B_1$ is a real symmetric matrix, we can adjust the basis so that $s_1=0$.  Note that the matrix entries for the $B_i$ automatically satisfy several additional identities that arise from the idempotent formulas.  Since $(1,0,0_{2 \times 2}) = e_{\delta}$, $(0,1,0_{2 \times 2}) = e_{\phi}$, and $(0,0,I_{2 \times 2})= e_{\chi}$, we have 
\begin{equation}  \label{idemp-eqs}
0_{2 \times 2} = \sum_i B_i,  0_{2 \times 2} = \sum_i \frac{\phi_i}{\delta_i}B_i, \mbox{ and } \frac{n}{m_{\chi}}I_{2 \times 2} = \sum_i \frac{\chi_i}{\delta_i} B_i. 
\end{equation} 
In addition, recall that our standard feasible trace gives the identities 
\begin{equation} \label{standfeastr}
n = \tau(b_0) = 1 + m_{\phi} + 2m_{\chi} \mbox{ and } 0 = \tau(b_i) = \delta_i + m_{\phi} \phi_i + m_{\chi}\chi_i \mbox{ for } i=1,\dots,5. 
\end{equation}
The structure constants relative to $\b$ can be determined using (\ref{scs1}) once the entries of the $B_i$ are known.
  
\begin{lemma}
\label{l-phi4-n}
Suppose $\b$ is the standard basis of a real bipartite rank $6$ table algebra, for which the kernel of $\phi$ is $\bn=\{b_0,b_2,b_3\}$. 
Then $\delta_i \ge 2$ for $2 \le i \le 4$, $\delta_1 \ge \max \{ \delta_3 / \delta_2, \delta_2 / \delta_3 \}$, and
\begin{eqnarray*}
 r_1 = - u_1 = 
\sqrt{\frac{2 \delta_1 \delta_4}{\delta_2 + \delta_3} } \ep_1, & & 
r_2 = u_2 = - \frac{\delta_2}{\delta_2 + \delta_3}, \\
s_2 = - s_3 = 
\frac{\sqrt{\delta_2 \delta_3 n}}{\sqrt{2} (\delta_2 + \delta_3)} \ep_2, 
& & r_3 = u_3 = - \frac{\delta_3}{\delta_2 + \delta_3}, \\
r_4 = - u_4 = 
- \sqrt{\frac{\delta_1 \delta_4}{2(\delta_2 + \delta_3)} }\ep_1, & & 
s_4 = - t_4 = 
\frac{\ep_3}{2} \sqrt{\frac{\delta_4 n}{\delta_2 + \delta_3}  },
\end{eqnarray*}
where $\ep_1, \ep_2, \ep_3 \in \{ 1, -1 \}$.
\end{lemma}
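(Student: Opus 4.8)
The plan is to extract all the arithmetic consequences of the hypotheses, then use the character values supplied by Lemma~\ref{l-phi} together with the idempotent identities \eqref{idemp-eqs} and the standard feasible trace to pin down every entry of the $B_i$, and finally read the four inequalities off from nonnegativity of a handful of structure constants.

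\emph{Step 1 (arithmetic).} Since $\bn=\ker\phi=\{b_0,b_2,b_3\}$, Lemma~\ref{l-phi}(i)--(ii) gives $\phi_1=-\delta_1$, $\phi_2=\delta_2$, $\phi_3=\delta_3$, $\phi_4=-\delta_4$, $\chi_1=\chi_4=0$, and $\chi_2=-2\delta_2/m_\chi$, $\chi_3=-2\delta_3/m_\chi$. Feeding $\phi_i=\pm\delta_i$ into $1+\phi_1+\phi_2+\phi_3+2\phi_4=0$ yields $\delta_1+2\delta_4=1+\delta_2+\delta_3$; combining this with $n=1+\delta_1+\delta_2+\delta_3+2\delta_4$ and (as $m_\phi=1$) $m_\chi=n/2-1$ gives the identities $n=2\delta_1+4\delta_4=2(\delta_2+\delta_3+1)$, $m_\chi=\delta_2+\delta_3$, $n-2\delta_1=4\delta_4$, $n-2\delta_4=2(\delta_1+\delta_4)$, which are used freely below.

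\emph{Step 2 (the matrix entries).} As noted before the lemma we may conjugate so that $B_1=\mathrm{diag}(r_1,u_1)$, i.e.\ $s_1=0$; then $\chi_1=\chi_4=0$ forces $u_1=-r_1$ and $u_4=-r_4$. Reading the idempotent identities $\sum_iB_i=0$ and $\sum_i(\phi_i/\delta_i)B_i=0$ from \eqref{idemp-eqs} entrywise and taking sums/differences gives $r_1=-2r_4$, $r_2+r_3=u_2+u_3=-1$, $s_3=-s_2$, $t_4=-s_4$. The key equality $r_2=u_2$ comes from a trace trick: since $\phi_1\phi_2<0$, Lemma~\ref{l-phi}(iii) forces $b_1b_2\in\spa\{b_1,b_4,b_5\}$, hence $B_1B_2\in\spa\{B_1,B_4,B_5\}$, and taking traces annihilates the right-hand side because $\chi_1=\chi_4=\chi_5=0$, while $\tr(B_1B_2)=r_1(r_2-u_2)$; once we know $r_1\ne0$ this gives $r_2=u_2=\chi_2/2=-\delta_2/(\delta_2+\delta_3)$ and $r_3=u_3=-\delta_3/(\delta_2+\delta_3)$. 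Finally, from $\tau(b_ib_i^*)=n\delta_i$ and $\tau=\delta+\phi+m_\chi\chi$ one gets $\tr(B_iB_i^\top)=(n\delta_i-\delta_i^2-\phi_i^2)/m_\chi$; applying this for $i=1,2,4$ and substituting the relations above and the Step~1 identities yields $r_1^2=2\delta_1\delta_4/(\delta_2+\delta_3)$ (so indeed $r_1\ne0$, closing the gap), $s_2^2=\delta_2\delta_3n/(2(\delta_2+\delta_3)^2)$, and $s_4^2=\delta_4n/(4(\delta_2+\delta_3))$. Writing $r_1=|r_1|\ep_1$, $s_2=|s_2|\ep_2$, $s_4=|s_4|\ep_3$ and recalling $u_1=-r_1$, $r_4=-r_1/2$, $u_4=-r_4$, $s_3=-s_2$, $t_4=-s_4$ produces exactly the displayed formulas; the three signs are free since they realize part of the orthogonal-conjugation ambiguity already recorded in Theorem~\ref{020616a} (e.g.\ conjugating by $\mathrm{diag}(-1,1)$ flips $\ep_2$ and $\ep_3$).

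\emph{Step 3 (the inequalities).} All four come from imposing $\lambda_{ijk}\ge0$ in \eqref{scs1}. Writing $b_2^2=\delta_2b_0+\lambda_{222}b_2+\lambda_{223}b_3$ (legitimate since $b_2^2\in\spa\{b_0,b_2,b_3\}$ by Lemma~\ref{l-phi}(iii) and $\lambda_{220}=\delta_2$), applying $\Delta$, and using $B_3=-I-B_2$ together with $B_2^2=(s_2^2-r_2^2)I+2r_2B_2$ (Cayley--Hamilton, $\tr B_2=2r_2$), one solves $\lambda_{222}=\delta_2\bigl(\delta_2(\delta_2-1)+\delta_3(\delta_2-3)\bigr)/(\delta_2+\delta_3)^2$; requiring $\lambda_{222}\ge0$ and (by the $b_2\leftrightarrow b_3$ symmetry) $\lambda_{333}\ge0$ together forces $\delta_2\ge2$ and $\delta_3\ge2$. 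If $\delta_4=1$ then $b_4$ is invertible with $b_4^{-1}=b_4^*$ and $b_4^2=\lambda_{442}(b_2+b_3)$ (since $b_4^2\in\spa\{b_0,b_2,b_3\}$, $\lambda_{440}=0$, and $B_4^2$ is scalar, forcing $\lambda_{442}=\lambda_{443}$); then $b_4=b_4^2b_4^*=\lambda_{442}(b_2+b_3)b_4^*$, and comparing $M_2(\R)$-components (with $B_2+B_3=-I$) gives $(\delta_2+\delta_3)^2=1$, contradicting $\delta_2,\delta_3\ge2$, so $\delta_4\ge2$. For the last inequality one uses noncommutativity: because $\phi_1\phi_4>0$, both $b_1b_4$ and $b_4b_1$ lie in $\spa\{b_2,b_3\}$ (Lemma~\ref{l-phi}(iii)); applying $\Delta$, writing $B_2=r_2I+s_2J$ with $J=\left(\begin{smallmatrix}0&1\\1&0\end{smallmatrix}\right)$, computing $B_1B_4=r_1r_4I+r_1s_4J$ and $B_4B_1=r_1r_4I-r_1s_4J$, and solving against the degree equations $\delta_1\delta_4=\lambda_{1\,4\,2}\delta_2+\lambda_{1\,4\,3}\delta_3$ (and its $b_4b_1$ analogue) gives the four values $\bigl(\delta_1\delta_4\pm c\,\delta_2\bigr)/(\delta_2+\delta_3)$ and $\bigl(\delta_1\delta_4\pm c\,\delta_3\bigr)/(\delta_2+\delta_3)$, where $c=r_1s_4/s_2$ satisfies $|c|=\delta_4\sqrt{\delta_1/(\delta_2\delta_3)}$. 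Nonnegativity of all four forces $\delta_1\delta_4\ge|c|\max\{\delta_2,\delta_3\}$, which rearranges to $\delta_1\ge\max\{\delta_2/\delta_3,\delta_3/\delta_2\}$.

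Steps~1--2 are essentially routine once the arithmetic identities are in hand: everything is forced by Lemma~\ref{l-phi}, the idempotent identities, and the norm formula coming from the standard feasible trace, the only mildly clever point being the trace argument giving $r_2=u_2$. I expect the main obstacle to be Step~3: although each individual structure-constant computation is mechanical, one has to identify exactly which constants to compute, and the bound on $\delta_1$ genuinely relies on the algebra being noncommutative, since it plays $b_1b_4$ off against $b_4b_1$.
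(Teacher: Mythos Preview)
Your Steps~1--2 and the arguments for $\delta_2,\delta_3\ge 2$ and $\delta_1\ge\max\{\delta_2/\delta_3,\delta_3/\delta_2\}$ are correct and essentially match the paper (the paper obtains $r_2=u_2$ from $\tau(b_1b_2)=0$, which is the same trace computation you give, and it reads the $\delta_1$ bound off $b_1b_4$ and $b_1b_4^*$ rather than $b_1b_4$ and $b_4b_1$, but these yield the same four structure constants).

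There is, however, a genuine gap in your argument for $\delta_4\ge 2$. Your thin--element trick is valid and does yield the contradiction $(\delta_2+\delta_3)^2=1$, but it only applies when $\delta_4=1$; from ``$\delta_4\neq 1$'' you cannot conclude ``$\delta_4\ge 2$'', because the lemma is stated for arbitrary (not integral) table algebras, so $\delta_4$ is a priori any positive real. All you have at this point is $\delta_4\ge 1$ (from $\lambda_{45,2}\delta_2+\lambda_{45,3}\delta_3=\delta_4(\delta_4-1)\ge 0$) together with $\delta_4\neq 1$, i.e.\ $\delta_4>1$. The interval $1<\delta_4<2$ is not excluded by your argument.

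The paper closes this gap by computing $b_4b_4^*$ explicitly: one of the two nontrivial coefficients is $\dfrac{\delta_4\bigl(\delta_j(\delta_4-1)-\ep\sqrt{\delta_1\delta_2\delta_3}\bigr)}{\delta_j(\delta_2+\delta_3)}$ (with $j\in\{2,3\}$), so nonnegativity forces $\delta_j(\delta_4-1)\ge\sqrt{\delta_1\delta_2\delta_3}$ for some $j$. Now feed in the bound on $\delta_1$ you already proved: e.g.\ if $j=2$, then $\sqrt{\delta_1\delta_2\delta_3}\ge\sqrt{(\delta_2/\delta_3)\,\delta_2\delta_3}=\delta_2$, whence $\delta_4-1\ge 1$. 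The same works for $j=3$, giving $\delta_4\ge 2$ in all cases. So the fix is to compute $b_4b_4^*$ (rather than invoke thinness) and combine it with the $\delta_1$ inequality you have already established.
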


\begin{proof}
It follows from the definition of $\bn$ that $\phi_1 < 0, \phi_2 > 0, \phi_3 > 0$, and $\phi_4 < 0$. 
In light of the idempotent equations \eqref{idemp-eqs}, Lemma \ref{l-phi} implies that
$$
1 + r_2 + r_3 = r_1 + 2 r_4 = 0, \quad
1 + u_2 + u_3 = u_1 + 2 u_4 = 0, \quad
s_2 + s_3 = s_4 + t_4 = 0.
$$
Furthermore, since $m_\chi = (n - 2)/2$ by \eqref{standfeastr}, we have that
$$
1 + \delta_2 + \delta_3 = \delta_1 + 2 \delta_4 = n/2 \quad
\hbox{and} \quad m_\chi = \delta_2 + \delta_3.
$$
Note that $r_1 = - u_1$ and $r_4 = - u_4$ by Lemma \ref{l-phi}. So it 
follows from $n \delta_1 = \tau(b_1^2) = \delta_1^2 + \phi_1^2 + 
m_\chi tr(B_1^2)
= 2 \delta_1^2 + (\delta_2 + \delta_3) (r_1^2 + u_1^2)$ that
$$
 r_1 = - u_1 = 
\sqrt{\frac{2 \delta_1 \delta_4}{\delta_2 + \delta_3} } \ep_1,  
\quad \hbox{ where } \ep_1 = \pm 1.
$$ 
Thus, $r_1 + 2 r_4 = 0$ yields that
$$
r_4 = - u_4 = 
- \sqrt{\frac{\delta_1 \delta_4}{2(\delta_2 + \delta_3)} }\ep_1.
$$
Since $0 = \tau(b_1 b_2) = \delta_1 \delta_2 + \phi_1 \phi_2 + 
m_\chi tr(B_1 B_2) = 2 r_1 m_\chi (r_2 - u_2)$, we see that 
$r_2 = u_2$. Thus, $0 = \tau(b_2) = \delta_2 + \phi_2 + 
m_\chi tr(B_2) = 2 \delta_2 + 2(\delta_2 + \delta_3) r_2$, and hence
$$
r_2 = u_2 = - \frac{\delta_2}{\delta_2 + \delta_3}, \qquad
r_3 = u_3 = - \frac{\delta_3}{\delta_2 + \delta_3}.
$$
Now $n \delta_2 = \tau(b_2^2) 
= 2 \delta_2^2 + (\delta_2 + \delta_3) (r_2^2 + u_2^2 + 2 s_2^2)$
and $s_2 + s_3 = 0$ imply that
$$
s_2 = - s_3 = 
\frac{\sqrt{\delta_2 \delta_3 n}}{\sqrt{2} (\delta_2 + \delta_3)} \ep_2, 
\quad \hbox{ where } \ep_2 = \pm 1.
$$
Finally, since $t_4 = - s_4$, and $n \delta_4 = \tau(b_4 b_4^*) =
2 \delta_4^2 + m_\chi (r_4^2 + u_4^2 + s_4^2 + t_4^2)$, we get that
$$
s_4 = - t_4 = 
\frac{\ep_3}{2} \sqrt{\frac{\delta_4 n}{\delta_2 + \delta_3}  },
\quad \hbox{ where } \ep_3 = \pm 1.
$$

Now it is straightforward to check that
$$
b_2^2 = \delta_2 b_0 +
\frac{\delta_2(\delta_2^2 - \delta_2 + \delta_2 \delta_3 - 3 \delta_3)}
{(\delta_2 + \delta_3)^2} b_2 + 
\frac{\delta_2(\delta_2 n - 2 \delta_3)}{2(\delta_2 + \delta_3)^2} b_3,
$$
and
$$
b_3^2 = \delta_3 b_0 +
\frac{\delta_3(\delta_3 n - 2 \delta_2)}{2(\delta_2 + \delta_3)^2} b_2 + 
\frac{\delta_3(\delta_3^2 - \delta_3 + \delta_2 \delta_3 - 3 \delta_2)}
{(\delta_2 + \delta_3)^2} b_3. 
$$
Thus, 
\begin{equation}
\label{e-ine-delta2}
\delta_2^2 - \delta_2 + \delta_2 \delta_3 - 3 \delta_3 \ge 0
\qquad \hbox{and} \qquad
\delta_3^2 - \delta_3 + \delta_2 \delta_3 - 3 \delta_2 \ge 0.
\end{equation}
Adding both sides of the two inequalities in \eqref{e-ine-delta2}, we 
get that $\delta_2 + \delta_3 \ge 4$. If $\delta_2 < 2$, then 
$\delta_3 > 2$, and the first inequality in \eqref{e-ine-delta2}
yields that
$$
\delta_2^2 - \delta_2 \ge (3 - \delta_2) \delta_3 > 6 - 2 \delta_2,
$$
a contradiction. So we must have that $\delta_2 \ge 2$. Similarly, 
we also have $\delta_3 \ge 2$. Furthermore, since
$$
b_1 b_4 = 
\frac{\delta_4 (\delta_1 \delta_2 + 
\ep_1 \ep_2 \ep_3 \sqrt{\delta_1 \delta_2 \delta_3})}
{\delta_2(\delta_2 + \delta_3)} b_2 + 
\frac{\delta_4 (\delta_1 \delta_3 - 
\ep_1 \ep_2 \ep_3  \sqrt{\delta_1 \delta_2 \delta_3})}
{\delta_3(\delta_2 + \delta_3)} b_3 
$$
and
$$
b_1 b_4^* = 
\frac{\delta_4 (\delta_1 \delta_2 - 
\ep_1 \ep_2 \ep_3 \sqrt{\delta_1 \delta_2 \delta_3})}
{\delta_2(\delta_2 + \delta_3)} b_2 + 
\frac{\delta_4 (\delta_1 \delta_3 + 
\ep_1 \ep_2 \ep_3  \sqrt{\delta_1 \delta_2 \delta_3})}
{\delta_3(\delta_2 + \delta_3)} b_3, 
$$
we see that $\delta_1 \ge \max \{ \delta_3 / \delta_2, 
\delta_2 / \delta_3 \}$. It is also straightforward to check that
$$
b_4 b_4^* = \delta_4 b_0 + 
\frac{\delta_4 (\delta_2 \delta_4 - \delta_2 +
\ep_1 \ep_2 \ep_3 \sqrt{\delta_1 \delta_2 \delta_3})}
{\delta_2 (\delta_2 + \delta_3)} b_2 + 
\frac{\delta_4 (\delta_3 \delta_4 - \delta_3 -
\ep_1 \ep_2 \ep_3 \sqrt{\delta_1 \delta_2 \delta_3})}
{\delta_3 (\delta_2 + \delta_3)} b_3.
$$
So either $\delta_2 \delta_4 - \delta_2 \ge \sqrt{\delta_1 \delta_2 \delta_3}$
or $\delta_3 \delta_4 - \delta_3 \ge \sqrt{\delta_1 \delta_2 \delta_3}$.
But $\delta_1 \delta_3 \ge \delta_2$ and $\delta_1 \delta_2 \ge \delta_3$.
Hence, we always have $\delta_4 \ge 2$, and the lemma holds.
\end{proof} 

The other structure constants of the \ta\ in the above lemma are given as below:

$$
b_1^2 = \delta_1 b_0 + 
\frac{\delta_1^2 - \delta_1}{\delta_2 + \delta_3} (b_2 + b_3),
$$

$$
b_1 b_2 = \frac{\delta_2 (\delta_1 - 1)}{\delta_2 + \delta_3} b_1 +
\frac{\delta_1 \delta_2 + 
\ep_1 \ep_2 \ep_3 \sqrt{\delta_1 \delta_2 \delta_3}}
{\delta_2 + \delta_3} b_4 + 
\frac{\delta_1 \delta_2 - 
\ep_1 \ep_2 \ep_3 \sqrt{\delta_1 \delta_2 \delta_3}}
{\delta_2 + \delta_3} b_4^*,
$$

$$
b_1 b_3 = \frac{\delta_3 (\delta_1 - 1)}{\delta_2 + \delta_3} b_1 +
\frac{\delta_1 \delta_3 - 
\ep_1 \ep_2 \ep_3  \sqrt{\delta_1 \delta_2 \delta_3}}
{\delta_2 + \delta_3} b_4 +
\frac{\delta_1 \delta_3 + 
\ep_1 \ep_2 \ep_3  \sqrt{\delta_1 \delta_2 \delta_3}}
{\delta_2 + \delta_3} b_4^*, 
$$

$$
b_2 b_3 = b_3 b_2 =
\frac{\delta_3(\delta_2 n - 2 \delta_3)}{2(\delta_2 + \delta_3)^2} b_2 +
\frac{\delta_2(\delta_3 n - 2 \delta_2)}{2(\delta_2 + \delta_3)^2} b_3, 
$$

$$
b_2 b_4 = 
\frac{\delta_4 (\delta_1 \delta_2 - 
\ep_1 \ep_2 \ep_3 \sqrt{\delta_1 \delta_2 \delta_3})}
{\delta_1(\delta_2 + \delta_3)} b_1 +
\frac{\delta_2 \delta_4 - \delta_2 +
\ep_1 \ep_2 \ep_3 \sqrt{\delta_1 \delta_2 \delta_3}}
{\delta_2 + \delta_3} b_4 + 
\frac{\delta_2 \delta_4}{\delta_2 + \delta_3} b_4^*,
$$

$$
b_2 b_4^* = 
\frac{\delta_4 (\delta_1 \delta_2 + 
\ep_1 \ep_2 \ep_3 \sqrt{\delta_1 \delta_2 \delta_3})}
{\delta_1(\delta_2 + \delta_3)} b_1 +
\frac{\delta_2 \delta_4}{\delta_2 + \delta_3} b_4 +
\frac{\delta_2 \delta_4 - \delta_2 -
\ep_1 \ep_2 \ep_3 \sqrt{\delta_1 \delta_2 \delta_3}}
{\delta_2 + \delta_3} b_4^*,
$$

$$
b_3 b_4 = 
\frac{\delta_4 (\delta_1 \delta_3 + 
\ep_1 \ep_2 \ep_3  \sqrt{\delta_1 \delta_2 \delta_3})}
{\delta_1(\delta_2 + \delta_3)} b_1 +
\frac{\delta_3 \delta_4 - \delta_3 -
\ep_1 \ep_2 \ep_3 \sqrt{\delta_1 \delta_2 \delta_3}}
{\delta_2 + \delta_3} b_4 +
\frac{\delta_3 \delta_4}{\delta_2 + \delta_3} b_4^*,
$$

$$
b_3 b_4^* = 
\frac{\delta_4 (\delta_1 \delta_3 - 
\ep_1 \ep_2 \ep_3  \sqrt{\delta_1 \delta_2 \delta_3})}
{\delta_1(\delta_2 + \delta_3)} b_1 +
\frac{\delta_3 \delta_4}{\delta_2 + \delta_3} b_4 + 
\frac{\delta_3 \delta_4 - \delta_3 +
\ep_1 \ep_2 \ep_3 \sqrt{\delta_1 \delta_2 \delta_3}}
{\delta_2 + \delta_3} b_4^*,
$$

$$
b_4^* b_4 = \delta_4 b_0 + 
\frac{\delta_4 (\delta_2 \delta_4 - \delta_2 -
\ep_1 \ep_2 \ep_3 \sqrt{\delta_1 \delta_2 \delta_3})}
{\delta_2 (\delta_2 + \delta_3)} b_2 + 
\frac{\delta_4 (\delta_3 \delta_4 - \delta_3 +
\ep_1 \ep_2 \ep_3 \sqrt{\delta_1 \delta_2 \delta_3})}
{\delta_3 (\delta_2 + \delta_3)} b_3,
$$

$$
b_4^2 = b_5^2 = \frac{\delta_4^2}{\delta_2 + \delta_3}(b_2 + b_3).
$$

For a \ta\ \ab, let $\aut(\b) := \{ f \mid f : \b \to \b$ is an 
exact isomorphism\} be the automorphism group of $\b$. Let $G$ be 
a subgroup of $\aut(\b)$, and $\c G$ the group algebra of $G$ over $\c$,
which is also a \ta. Then the semi-direct product of \ab\ by \cg,
$(\c G \ltimes A, G \ltimes \b)$, is a \ta\ defined as follows: 
$\c G \ltimes A := \c G \times A$, $G \ltimes \b := G \times \b$, and the 
multiplication is defined by
$$
(g, b) \cdot (h, c) := (gh, b^h c), \quad \hbox{for any } g, h \in G, \mbox{ and } b, c \in \b,
$$
where $b^h := h(b)$ is the image of $b$ under $h$. 

We can now give a characterization of real bipartite integral table algebras of rank $6$.

\begin{thm}
\label{t-phi1b}
Let $(A,\b)$ be a real bipartite rank $6$ table algebra with $\bn = \{b_0, b_2, b_3\}$.  
Then $(A,\b)$ is integral if and only if 
\begin{equation}
\label{e-int}
\delta_1 = 1, \ \ \delta_2 = \delta_3 = \delta_4, \ \ 
2 \mid \delta_2, \ \ \hbox{and} \ \ 8 \mid (n - 2).
\end{equation}
Furthermore, if \ab\ is integral, then 
$$
\b \cong_x \aut(\bn) \ltimes \bn.
$$
\end{thm}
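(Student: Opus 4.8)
The plan is to read conditions~\eqref{e-int} directly off the explicit structure constants displayed after Lemma~\ref{l-phi4-n}, and then run the argument in reverse for sufficiency. For necessity, I would first show integrality forces $\delta_1=1$. The coefficient of $b_4$ in $b_1b_2$ is $\frac{\delta_1\delta_2+\ep_1\ep_2\ep_3\sqrt{\delta_1\delta_2\delta_3}}{\delta_2+\delta_3}$, so rationality already requires $\sqrt{\delta_1\delta_2\delta_3}\in\mathbb{Z}$; call it $k$. The coefficient of $b_2$ (equivalently $b_3$) in $b_1^2$ gives $(\delta_2+\delta_3)\mid\delta_1(\delta_1-1)$, and the same coefficient in $b_4^2$ gives $(\delta_2+\delta_3)\mid\delta_4^2$. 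Since $\delta_1+2\delta_4=1+\delta_2+\delta_3$ (established in the proof of Lemma~\ref{l-phi4-n}) we have $2\delta_4\equiv 1-\delta_1\pmod{\delta_2+\delta_3}$, so $(\delta_2+\delta_3)\mid 4\delta_4^2\equiv(1-\delta_1)^2$; subtracting this from $(\delta_2+\delta_3)\mid\delta_1^2-\delta_1$ yields $(\delta_2+\delta_3)\mid\delta_1-1$. But $\delta_1-1\ge 0$ while $\delta_2+\delta_3=(\delta_1-1)+2\delta_4>\delta_1-1$, so the only possibility is $\delta_1=1$.

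Once $\delta_1=1$ (so $k=\sqrt{\delta_2\delta_3}$), I would use only the table-algebra axiom that structure constants are nonnegative: the coefficients of $b_1$ in $b_2b_4$ and in $b_2b_4^*$ are $\frac{\delta_4(\delta_2\mp\ep_1\ep_2\ep_3 k)}{\delta_2+\delta_3}$, and both being $\ge 0$ forces $\delta_2\ge k$, i.e.\ $\delta_2\ge\delta_3$; the symmetric products $b_3b_4$ and $b_3b_4^*$ give $\delta_3\ge\delta_2$, so $\delta_2=\delta_3$, and then $2\delta_4=\delta_2+\delta_3=2\delta_2$ yields $\delta_2=\delta_3=\delta_4=:\delta$. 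Finally the coefficient of $b_4^*$ in $b_2b_4$ is $\frac{\delta_2\delta_4}{\delta_2+\delta_3}=\frac{\delta}{2}$, so $2\mid\delta$; and since $n=2\,o(\bn)=2(1+2\delta)$ by Lemma~\ref{150516a}, this is exactly $8\mid(n-2)$. This proves the necessity of~\eqref{e-int}.

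For sufficiency I would substitute $\delta_1=1$ and $\delta_2=\delta_3=\delta_4=\delta$ (a positive even integer, hence $\ge2$) into Lemma~\ref{l-phi4-n} and the displayed list; since $\sqrt{\delta_1\delta_2\delta_3}=\delta$, every coefficient collapses to a nonnegative integer — for example $b_1^2=b_0$, $b_1b_2\in\{b_4,b_4^*\}$, $b_2^2=\delta b_0+\tfrac{\delta-2}{2}b_2+\tfrac{\delta}{2}b_3$, $b_2b_3=\tfrac{\delta}{2}(b_2+b_3)$, $b_4^2=\tfrac{\delta}{2}(b_2+b_3)$, $b_4^*b_4=\delta b_0+\tfrac{\delta-1-\ep_1\ep_2\ep_3}{2}b_2+\tfrac{\delta-1+\ep_1\ep_2\ep_3}{2}b_3$ — so $(A,\b)$ is integral. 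For the last assertion, $\bn=\{b_0,b_2,b_3\}$ is then a symmetric integral table algebra of rank $3$ and order $2\delta+1$ whose structure constants are visibly invariant under the transposition $\sigma\colon b_2\leftrightarrow b_3$; hence $\sigma$ is an exact automorphism of $\bn$ and $\aut(\bn)=\langle\sigma\rangle\cong C_2$. Normalizing $\ep_1\ep_2\ep_3=1$ (allowed by the relabeling freedom of Theorem~\ref{020616a}, which contains the transposition $(4,5)$), the degree- and $*$-preserving bijection $b_0\mapsto(1,b_0)$, $b_1\mapsto(\sigma,b_0)$, $b_2\mapsto(1,b_2)$, $b_3\mapsto(1,b_3)$, $b_4\mapsto(\sigma,b_2)$, $b_5\mapsto(\sigma,b_3)$ matches the two multiplication tables — computing on the semidirect-product side via $(g,b)(h,c)=(gh,b^hc)$ — and therefore gives $\b\cong_x\aut(\bn)\ltimes\bn$.

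I expect the one genuine obstacle to be the divisibility bookkeeping in the step that produces $\delta_1=1$: the two individual divisibilities $(\delta_2+\delta_3)\mid\delta_1(\delta_1-1)$ and $(\delta_2+\delta_3)\mid\delta_4^2$ are each too weak to conclude anything, and it is their combination through the linear identity $\delta_1+2\delta_4=1+\delta_2+\delta_3$ — yielding the decisive $(\delta_2+\delta_3)\mid\delta_1-1$ — that makes the size comparison work. After that point, the remaining steps (the perfect-square observation, $\delta_2=\delta_3$ from nonnegativity, $2\mid\delta$, the substitution check, and the semidirect-product identification) are all routine.
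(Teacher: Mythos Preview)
Your argument is correct and follows essentially the same route as the paper: both proofs extract $(\delta_2+\delta_3)\mid \delta_1-1$ from the integrality of $b_1^2$ and $b_4^2$ together with the linear relation $\delta_1+2\delta_4=1+\delta_2+\delta_3$, conclude $\delta_1=1$ by the size comparison, then obtain $\delta_2=\delta_3=\delta_4$ and $2\mid\delta_2$, and finish by explicit substitution and the obvious bijection to $\aut(\bn)\ltimes\bn$. The only cosmetic differences are that the paper reaches $(\delta_2+\delta_3)\mid\delta_1-1$ via $(\delta_2+\delta_3)\mid\delta_4(\delta_1-1)$ rather than via $(1-\delta_1)^2$, and deduces $\delta_2=\delta_3$ from the thinness of $b_1$ (forcing $b_1b_3\in\{b_4,b_4^*\}$) instead of from the nonnegativity inequalities you use --- which, incidentally, are already recorded in Lemma~\ref{l-phi4-n} as $\delta_1\ge\max\{\delta_2/\delta_3,\delta_3/\delta_2\}$.
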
 

\begin{proof} 
First assume that \ab\ is integral.
Since the structure constants of $b_4^2$ are integers, we see that 
$(\delta_2 + \delta_3) \mid \delta_4^2$. Hence, 
$(\delta_2 + \delta_3) \mid \delta_4 (\delta_2 + \delta_3 - 2 \delta_4)$.
That is, $(\delta_2 + \delta_3) \mid \delta_4 (\delta_1 - 1)$. Now the 
structure constants for $b_1^2$ integral implies that 
$(\delta_2 + \delta_3) \mid \delta_1 (\delta_1 - 1)$. So
$(\delta_2 + \delta_3) \mid (\delta_1 + 2 \delta_4) (\delta_1 - 1)$.
That is, $(\delta_2 + \delta_3) \mid (1+ \delta_2 + \delta_3) (\delta_1 - 1)$.
Hence, $(\delta_2 + \delta_3) \mid (\delta_1 - 1)$. But 
$(\delta_2 + \delta_3)= \delta_1 - 1 + 2 \delta_4 > \delta_1 - 1 \ge 0$.
So we must have $\delta_1 = 1$. Therefore, $b_1$ is a thin element, 
$\delta_2 + \delta_3 = 2 \delta_4$, and $2 \mid \delta_4$ (because 
$(\delta_2 + \delta_3) \mid \delta_4^2$). Furthermore, we also have that 
$b_1 b_3 = b_4$ or $b_4^*$. So $\delta_3 = \sqrt{\delta_2 \delta_3}$, 
and hence $\delta_2 = \delta_3$. Now it is clear that $8 \mid (n - 2)$,
and \eqref{e-int} holds.

On the other hand, assume that \eqref{e-int} holds.
Let $k := \delta_2 \ (= \delta_3 = \delta_4)$. By renumbering $b_4, b_5$
if necessary, without loss of generality, we may assume that 
$\ep_1 \ep_2 \ep_3 = 1$. Then 
the products of two elements in $\b$ are given as follows:
\begin{eqnarray*}
& & b_1^2 = b_0, \ b_1 b_2 = b_4, \ b_1 b_3 = b_4^*, \ b_1 b_4 = b_2, \
b_1 b_4^* = b_3, \\
& & b_2^2 = k b_0 + \frac{k - 2}{2} b_2 + \frac{k}{2} b_3, \ 
b_2 b_3 = \frac{k}{2}(b_2 + b_3), \ b_2 b_4 = \frac{k}{2}(b_4 + b_4^*), \
b_2 b_4^* = k b_1 + \frac{k}{2} b_4 + \frac{k - 2}{2} b_4^*, \\
& & b_3^2 = k b_0 + \frac{k}{2} b_2 + \frac{k - 2}{2} b_3, \ 
b_3 b_4 = k b_1 + \frac{k - 2}{2} b_4 + \frac{k}{2} b_4^*, \
b_3 b_4^* = \frac{k}{2}(b_4 + b_4^*), \\
& & b_4^2 = b_5^2 = \frac{k}{2}(b_2 + b_3), \ 
b_4 b_4^* = k b_0 + \frac{k}{2} b_2 + \frac{k - 2}{2} b_3, \ 
b_4^* b_4 =  k b_0 + \frac{k - 2}{2} b_2 + \frac{k}{2} b_3. 
\end{eqnarray*}
So \ab\ is integral, and $\aut(\bn) = \{ \ell, \sigma \}$,
where $\ell$ is the identity map, and $\sigma$ interchanges $b_2$ and 
$b_3$. Now it is straightforward to check that the map defined by
$$
(\ell, b_i) \mapsto b_i, \ i = 0, 2, 3, \ \
(\sigma, b_0) \mapsto b_1, \ (\sigma, b_2) \mapsto b_4, \
(\sigma, b_3) \mapsto b_4^*
$$
is an exact isomorphism between $\b$ and $\aut(\bn) \ltimes \bn$. 
\end{proof}

One sees several examples of the real bipartite family in Table $1$ that are listed under the heading $U_{4k+1} \rtimes C_2$.  The above theorem implies these are the only type. 

\medskip
Now we consider integrality conditions for the family of non-real bipartite rank $6$ table algebras. 

\begin{lemma} \label{l-phi4-p}
If $\mathbf{N}=\{b_0,b_4,b_5\}$, then 
$\phi_i < 0$ for $i = 1, 2, 3$, 
$\max \{ \frac{\delta_2}{\delta_3}, \frac{\delta_3}{\delta_2} \}
\le \delta_1 \le \delta_2 \delta_3$, and 
by renumbering $b_4$ and $b_5$ if necessary, we have
$$
B_4 = \begin{bmatrix} - \frac{1}{2} & \frac{\sqrt{n}}{2\sqrt{2}} \cr
   - \frac{\sqrt{n}}{2\sqrt{2}} & -\frac{1}{2} \end{bmatrix}.
$$
Furthermore, 
\begin{eqnarray*}
r_1 = - u_1 = \ep_1 \sqrt{\frac{\delta_1 (\delta_2 + \delta_3)}{2 \delta_4} }, & &
r_2 = -u_2 = - \ep_1 \frac{ \sqrt{\delta_1} \delta_2}
  { \sqrt{2 \delta_4 (\delta_2 + \delta_3)}}, \\
r_3 = -u_3 = - \ep_1 \frac{ \sqrt{\delta_1} \delta_3}
  { \sqrt{2 \delta_4 (\delta_2 + \delta_3)}}, & &
s_2 = -s_3 = \frac{\ep_2}{2} \sqrt{ \frac{\delta_2 \delta_3 n}
{\delta_4 (\delta_2 + \delta_3)}}, 
\end{eqnarray*}
where $\ep_1, \ep_2 \in \{ 1, -1 \}$. 
\end{lemma}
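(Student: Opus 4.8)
The plan is to mimic the proof of Lemma~\ref{l-phi4-n}: read the sign pattern of $\phi$ off Lemma~\ref{l-phi}, solve for every entry of the matrices $B_i$ using the idempotent identities \eqref{idemp-eqs} and the feasible--trace orthogonality relations $\tau(b_ib_j^{*})=n\delta_i\delta_{ij}$, and then extract the two inequalities from the nonnegativity of the structure constants.  First, since $\bn=\ker\phi=\{b_0,b_4,b_5\}$, the elements $b_1,b_2,b_3$ lie outside $\bn$, so Lemma~\ref{l-phi}(ii) gives $\phi_i=-\delta_i<0$ and $\chi_i=0$ for $i=1,2,3$ (this is the first assertion), and it shows that the $*$-fixed matrices $B_1,B_2,B_3$ are symmetric and traceless, say $B_i=\left[\begin{smallmatrix}r_i&s_i\\ s_i&-r_i\end{smallmatrix}\right]$; after an orthogonal change of basis we may assume $B_1$ is diagonal, i.e.\ $s_1=0$.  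From $b_4,b_5\in\bn$ and Lemma~\ref{l-phi}(i) we get $\phi_4=\delta_4$, and since $o(\bn)=n/2$ by Lemma~\ref{150516a} while $n=2+2m_\chi$ (because $m_\phi=1$), it follows that $m_\chi=2\delta_4$, $\chi_4=-1$, and $\delta_1+\delta_2+\delta_3=n/2$; I would carry these last identities throughout.

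Next I would pin down the matrix entries.  Feeding the sign vector $(\phi_i/\delta_i)_{i=0}^5=(1,-1,-1,-1,1,1)$ into \eqref{idemp-eqs}: adding the first two relations there gives $B_4+B_4^{\top}=-I_2$, so $r_4=u_4=-\tfrac12$ and $t_4=-s_4$, while subtracting them gives $B_1+B_2+B_3=0$, i.e.\ $r_2+r_3=-r_1$ and $s_3=-s_2$; the orthogonal conjugation used to diagonalize $B_1$ preserves the shape $B_4=-\tfrac12 I_2+s_4\left[\begin{smallmatrix}0&1\\-1&0\end{smallmatrix}\right]$, since it fixes or negates the antisymmetric part.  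Then, expanding $\tau(b_ib_j^{*})=\delta_i\delta_j+\phi_i\phi_j+m_\chi\tr(B_iB_j^{\top})$ by \eqref{e-form} and using $\tr(B_iB_j^{\top})=2(r_ir_j+s_is_j)$ for $i,j\in\{1,2,3\}$: the relation $\tau(b_1^2)=n\delta_1$ gives $r_1^2=\delta_1(\delta_2+\delta_3)/(2\delta_4)$; the relations $\tau(b_1b_2)=\tau(b_1b_3)=0$ give $r_1r_2=-\delta_1\delta_2/(2\delta_4)$ and $r_1r_3=-\delta_1\delta_3/(2\delta_4)$, hence the asserted values of $r_2,r_3$ (with $r_2+r_3=-r_1$ automatic); $\tau(b_2^2)=n\delta_2$ together with $\delta_1+\delta_2+\delta_3=n/2$ gives $s_2^2=n\delta_2\delta_3/\bigl(4\delta_4(\delta_2+\delta_3)\bigr)$; and $\tau(b_4b_4^{*})=n\delta_4$ gives $s_4^2=n/8$.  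Replacing $b_4$ by $b_5$ interchanges the two sign choices for $s_4$ (it replaces $B_4$ by $B_4^{\top}=-\tfrac12 I_2-s_4\left[\begin{smallmatrix}0&1\\-1&0\end{smallmatrix}\right]$), so after that renumbering $B_4$ is as claimed, leaving $\ep_1=\mathrm{sgn}(r_1)$ and $\ep_2=\mathrm{sgn}(s_2)$ free.

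For the inequalities I would use that $(A,\b)$ is a table algebra, so every $\lambda_{ijk}\ge0$, and compute the relevant structure constants from \eqref{scs1} with the entries just found.  By Lemma~\ref{l-phi}(iii) the product $b_2b_3$ is a combination of $b_4,b_5$ only; evaluating $\langle b_2b_3,b_4\rangle$ and $\langle b_2b_3,b_5\rangle$ through \eqref{e-form} and simplifying with $\delta_1+\delta_2+\delta_3=n/2$, $o(\bn)=1+2\delta_4=n/2$, and $m_\chi=2\delta_4$, one gets
$$\lambda_{234}=\frac{\delta_2\delta_3+\ep\sqrt{\delta_1\delta_2\delta_3}}{2\delta_4},\qquad\lambda_{235}=\frac{\delta_2\delta_3-\ep\sqrt{\delta_1\delta_2\delta_3}}{2\delta_4}$$
for a fixed sign $\ep=\pm1$; nonnegativity forces $\delta_2\delta_3\ge\sqrt{\delta_1\delta_2\delta_3}$, i.e.\ $\delta_1\le\delta_2\delta_3$.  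Similarly, by Lemma~\ref{l-phi}(iii) each of $b_1b_4$ and $b_1b_4^{*}$ is a combination of $b_1,b_2,b_3$ only, and the analogous computation produces coefficients of the form $\bigl(\delta_1\delta_2\pm\sqrt{\delta_1\delta_2\delta_3}\bigr)/(2\delta_2)$ and $\bigl(\delta_1\delta_3\pm\sqrt{\delta_1\delta_2\delta_3}\bigr)/(2\delta_3)$ (the signs of the radical terms depending on whether $b_4$ or $b_4^{*}$ is used); their nonnegativity forces $\delta_1\delta_2\ge\delta_3$ and $\delta_1\delta_3\ge\delta_2$, that is, $\delta_1\ge\max\{\delta_2/\delta_3,\delta_3/\delta_2\}$.

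The conceptual content here is light; the real work lies in the second and third paragraphs, where one must evaluate traces of products and triple products of the $2\times2$ matrices and then recognize, after substituting $m_\chi=2\delta_4$ and $\delta_1+\delta_2+\delta_3=n/2$, that the nested radicals collapse to $\sqrt{\delta_1\delta_2\delta_3}$.  This bookkeeping is routine but error-prone, so the main practical obstacle is simply keeping every expression in lowest terms from the outset.
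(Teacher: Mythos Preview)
Your proposal is correct and follows essentially the same route as the paper's proof: read off the sign pattern from Lemma~\ref{l-phi}, extract the linear constraints on the $B_i$ from the idempotent equations~\eqref{idemp-eqs}, solve for the remaining entries via the trace relations $\tau(b_ib_j^{*})=n\delta_i\delta_{ij}$, and deduce the inequalities from nonnegativity of structure constants. The only cosmetic differences are that you compute $s_4$ directly from $\tau(b_4b_4^{*})=n\delta_4$ (the paper first determines $\lambda_{444}$ from Lemma~\ref{l-phi}(iii) and then reads off $s_4$), and you extract the lower bound $\delta_1\ge\max\{\delta_2/\delta_3,\delta_3/\delta_2\}$ from the coefficients of $b_1b_4$ and $b_1b_4^{*}$ rather than from $b_1b_2$ and $b_1b_3$; both choices yield the same constraint $\delta_1\delta_j\ge\sqrt{\delta_1\delta_2\delta_3}$.
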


\begin{proof}
By Lemma \ref{l-phi} the numbers 
$\phi_1, \phi_2, \phi_3$ are all negative.
By Lemma \ref{l-phi}, the idempotent equations \eqref{idemp-eqs} imply that 
$$
r_1 + r_2 + r_3 = 0, \ u_1 + u_2 + u_3 = 0, \
r_4 = u_4 = -1/2, \ s_2 + s_3 = 0, \ s_4 + t_4 = 0.
$$
So from \eqref{standfeastr} we get that
\begin{equation}
\label{e-m-chi}
m_\chi = 2 \delta_4, \ n = 2 + 4 \delta_4, \ 
1 + 2 \delta_4 = \delta_1 + \delta_2 + \delta_3 = n/2.
\end{equation}
Also by Lemma \ref{l-phi}, we have $b_4 b_4^* = \delta_4 b_0 + \l_{444}(b_4 + b_4^*)$.
Hence, $\delta_4^2 = \delta_4 + 2\delta_4 \l_{444}$. So $\l_{444} = (\delta_4 - 1)/2$,
and
\begin{equation}
\label{e-b4b5}
b_4 b_4^* = \delta_4 b_0 + \frac{\delta_4 - 1}{2}(b_4 + b_4^*).
\end{equation}
Note that
$$
B_4 = \begin{bmatrix} - \frac{1}{2} & s_4 \cr
   -s_4 & -\frac{1}{2} \end{bmatrix} \quad \hbox{and} \quad
B_4 B_4^\top = \begin{bmatrix}  \frac{1}{4} + s_4^2 &  0 \cr
   0 & \frac{1}{4} + s_4^2 \end{bmatrix}.
$$
So by \eqref{e-b4b5}, $1/4 + s_4^2 = \delta_4 - (\delta_4 - 1)/2$. Thus, 
$s_4 = \pm \frac{\sqrt{2 \delta_4 + 1}}{2} = \pm \frac{\sqrt{n}}{2 \sqrt{2}}$. 
By renumbering $b_4$ and $b_5$ if necessary, we may assume that 
$s_4 = \frac{\sqrt{n}}{2 \sqrt{2}}$. So 
\begin{equation}
\label{e-b4}
B_4 = \begin{bmatrix} - \frac{1}{2} & \frac{\sqrt{n}}{2 \sqrt{2}} \cr
   -\frac{\sqrt{n}}{2 \sqrt{2}} & -\frac{1}{2} \end{bmatrix}.
\end{equation} 
Furthermore, $b_4^2 = \frac{\delta_4 - 1}{2}b_4 +
\frac{\delta_4 + 1}{2}b_4^*$.

 Note that $r_i + u_i = 0$ for $1 \le i \le 3$ by Lemma \ref{l-phi}.
Since $n \delta_1 = \tau(b_1^2) = \delta_1^2 + \phi_1^2 + m_\chi tr(B_1^2)
= 2 \delta_1^2 + 2 \delta_4 (r_1^2 + u_1^2)$,  
we see that $r_1^2 = u_1^2 = \frac{\delta_1 (n - 2 \delta_1)}{4 \delta_4} =
\frac{\delta_1 (\delta_2 + \delta_3)}{2 \delta_4}$ by \eqref{e-m-chi}. Hence,
$$
r_1 = - u_1 =  \ep_1 \sqrt{\frac{\delta_1 (\delta_2 + \delta_3)}{2 \delta_4} },
\qquad \hbox{where } \ep_1 = \pm 1. 
$$
Now $0 = \tau(b_1 b_2) = \delta_1 \delta_2 + \phi_1 \phi_2 + m_\chi tr(B_1 B_2)
= 2 \delta_1 \delta_2 + 2 \delta_4(r_1 r_2 + u_1 u_2)$ and $r_2 = - u_2$. So we have that
$$
r_2 = - u_2 = - \ep_1 \frac{ \sqrt{\delta_1} \delta_2}
  { \sqrt{2 \delta_4 (\delta_2 + \delta_3)}}.
$$
Also it follows from $r_1 + r_2 + r_3 = 0$ that
$$
r_3 = - u_3 = - \ep_1 \frac{ \sqrt{\delta_1} \delta_3}
  { \sqrt{2 \delta_4 (\delta_2 + \delta_3)}}.
$$
Note that $n \delta_2 = \tau(b_2^2) = \delta_2^2 + \phi_2^2 + m_\chi tr(B_2^2)
= 2 \delta_2^2 + 2 \delta_4 (r_2^2 + u_2^2 + 2s_2^2)$, and $s_2 + s_3 = 0$. So we see that 
$$
s_2 = - s_3 = \frac{\ep_2}{2} \sqrt{ \frac{\delta_2 \delta_3 n}
{\delta_4 (\delta_2 + \delta_3)}}, \qquad \hbox{where } \ep_2 = \pm 1.
$$

Now it is straightforward to check that
$$
b_i^2 = \delta_i b_0 + \frac{\delta_i^2 - \delta_i}{2 \delta_4}(b_4 + b_4^*),
\quad i = 1, 2, 3.
$$
Furthermore,
$$
b_1 b_2 = \frac{\delta_1 \delta_2 + 
  \ep_1 \ep_2 \sqrt{\delta_1 \delta_2 \delta_3}}{2 \delta_4} b_4 +
\frac{\delta_1 \delta_2 -
  \ep_1 \ep_2 \sqrt{\delta_1 \delta_2 \delta_3}}{2 \delta_4} b_4^*,
$$
$$
b_1 b_3 = \frac{\delta_1 \delta_3 - 
  \ep_1 \ep_2 \sqrt{\delta_1 \delta_2 \delta_3}}{2 \delta_4} b_4 +
\frac{\delta_1 \delta_3 +
  \ep_1 \ep_2 \sqrt{\delta_1 \delta_2 \delta_3}}{2 \delta_4} b_4^*,
$$
and
$$
b_2 b_3 = \frac{\delta_2 \delta_3 + 
  \ep_1 \ep_2 \sqrt{\delta_1 \delta_2 \delta_3}}{2 \delta_4} b_4 +
\frac{\delta_2 \delta_3 -
  \ep_1 \ep_2 \sqrt{\delta_1 \delta_2 \delta_3}}{2 \delta_4} b_4^*.
$$
Since the structure constants are non-negative, it follows that
 $\max \{ \frac{\delta_2}{\delta_3}, \frac{\delta_3}{\delta_2} \}
\le \delta_1 \le \delta_2 \delta_3$, and (ii) holds.
\end{proof}

The other structure constants of non-real bipartite rank $6$ table algebras 
are given as below:

$$
b_1 b_4 = \frac{\delta_1 - 1}{2} b_1 + 
  \frac{\delta_1 \delta_2 + \ep_1 \ep_2 \sqrt{\delta_1 \delta_2 \delta_3}}
   {2 \delta_2} b_2 +
  \frac{\delta_1 \delta_3 - \ep_1 \ep_2 \sqrt{\delta_1 \delta_2 \delta_3}}
   {2 \delta_3} b_3, 
$$
$$
b_1 b_5 = \frac{\delta_1 - 1}{2} b_1 + 
  \frac{\delta_1 \delta_2 - \ep_1 \ep_2 \sqrt{\delta_1 \delta_2 \delta_3}}
   {2 \delta_2} b_2 +
  \frac{\delta_1 \delta_3 + \ep_1 \ep_2 \sqrt{\delta_1 \delta_2 \delta_3}}
   {2 \delta_3} b_3, 
$$
$$
b_2 b_4 = \frac{\delta_1 \delta_2 -
  \ep_1 \ep_2 \sqrt{\delta_1 \delta_2 \delta_3}}{2 \delta_1} b_1 +
\frac{\delta_2 - 1}{2} b_2 + 
\frac{\delta_2 \delta_3 +
  \ep_1 \ep_2 \sqrt{\delta_1 \delta_2 \delta_3}}{2 \delta_3} b_3,
$$
$$
b_2 b_5 = \frac{\delta_1 \delta_2 +
  \ep_1 \ep_2 \sqrt{\delta_1 \delta_2 \delta_3}}{2 \delta_1} b_1 +
\frac{\delta_2 - 1}{2} b_2 + 
\frac{\delta_2 \delta_3 -
  \ep_1 \ep_2 \sqrt{\delta_1 \delta_2 \delta_3}}{2 \delta_3} b_3,
$$
$$
b_3 b_4 = \frac{\delta_1 \delta_3 + 
  \ep_1 \ep_2 \sqrt{\delta_1 \delta_2 \delta_3}}{2 \delta_1} b_1 +
\frac{\delta_2 \delta_3 -
  \ep_1 \ep_2 \sqrt{\delta_1 \delta_2 \delta_3}}{2 \delta_2} b_2 +
\frac{\delta_3 - 1}{2} b_3,
$$
$$
b_3 b_5 = \frac{\delta_1 \delta_3 - 
  \ep_1 \ep_2 \sqrt{\delta_1 \delta_2 \delta_3}}{2 \delta_1} b_1 +
\frac{\delta_2 \delta_3 +
  \ep_1 \ep_2 \sqrt{\delta_1 \delta_2 \delta_3}}{2 \delta_2} b_2 +
\frac{\delta_3 - 1}{2} b_3.
$$

We can now give our integrality condition for non-real bipartite rank $6$ table algebras.

\begin{thm}
\label{t-phi1c}
Let $(A,\b)$ be a non-real bipartite rank $6$ table algebra with $\bn = \{b_0, b_4, b_5 \}$.
Then $(A,\b)$ is integral if and only if there are odd positive integers $\alpha$, $\gamma$, $k_1$, and $k_2$ such that 
$$
\delta_1=\alpha \gamma k_1, \  \delta_2=\alpha \gamma k_2, \  
\delta_3 = \alpha^2 k_1 k_2, \ gcd(k_1,k_2)=1, \ \alpha^2 < 2 \gamma, 
$$
and 
$$
\delta_4 \quad \hbox{divides each of} \quad
\gamma k_1 k_2, \ \gamma k_1 (\alpha \gamma k_1 - 1),\ 
\gamma k_2 (\alpha \gamma k_2 - 1), \  
k_1 k_2 (\alpha^2 k_1 k_2 - 1).
$$   
\end{thm}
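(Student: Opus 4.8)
The plan is to prove both implications from the explicit structure‑constant list displayed just after Lemma~\ref{l-phi4-p}, together with the identities $m_\chi = 2\delta_4$, $n = 2+4\delta_4$ and $\delta_1+\delta_2+\delta_3 = 1+2\delta_4$ recorded in \eqref{e-m-chi}. For the ``only if'' direction I would first read off arithmetic constraints from that list: integrality of the coefficients of $b_4b_4^{*}$ forces $\delta_4$ odd, and the coefficient $\tfrac{\delta_i-1}{2}$ of $b_i$ in $b_ib_4$ forces $\delta_1,\delta_2,\delta_3$ odd; since several coefficients (e.g. those of $b_1b_2$) are rational expressions in the $\delta_i\delta_j$ and in $\sqrt{\delta_1\delta_2\delta_3}$, integrality forces $m:=\sqrt{\delta_1\delta_2\delta_3}$ to be a (necessarily odd) positive integer, so $\delta_1\delta_2\delta_3$ is a perfect square. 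Subtracting matched pairs of coefficients — e.g.\ the coefficient of $b_2$ in $b_1b_4$ minus that in $b_1b_5$, and similarly for $b_3$ and (from $b_2b_4$ versus $b_2b_5$) for $b_1$ — yields $\delta_i \mid m$ for $i=1,2,3$; doing the same inside $b_1b_2$ yields $\delta_4 \mid m$; and the coefficient $\tfrac{\delta_i^{2}-\delta_i}{2\delta_4}$ of $b_4+b_4^{*}$ in $b_i^{2}$ yields $\delta_4 \mid \delta_i(\delta_i-1)$ for $i=1,2,3$ (using $\delta_4$ odd). Conversely, a direct check of the whole list shows that these conditions, plus $\delta_1,\delta_2,\delta_3,\delta_4$ odd and $\delta_1\delta_2\delta_3 = m^{2}$, already make every structure constant an integer, so they are equivalent to integrality.

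Next I would convert ``$\delta_1\delta_2\delta_3 = m^{2}$ and $\delta_i\mid m$ for $i=1,2,3$, all $\delta_i$ odd'' into the stated parametrization, working one prime at a time. Fix an odd prime $p$ and set $a = v_p(\delta_1)$, $b = v_p(\delta_2)$, $c = v_p(\delta_3)$; then $2v_p(m) = a+b+c$ together with $v_p(m)\ge a,b,c$ is equivalent to $a+b+c$ being even and to the three inequalities $a+b\ge c$, $a+c\ge b$, $b+c\ge a$. Defining $v_p(k_1) = \max(a-b,0)$, $v_p(k_2) = \max(b-a,0)$, $v_p(\gamma) = (a+b-c)/2$ and $v_p(\alpha) = (c-|a-b|)/2$, one verifies that all four exponents are non‑negative integers, that $\min(v_p(k_1),v_p(k_2)) = 0$, and that $a = v_p(\alpha)+v_p(\gamma)+v_p(k_1)$, $b = v_p(\alpha)+v_p(\gamma)+v_p(k_2)$, $c = 2v_p(\alpha)+v_p(k_1)+v_p(k_2)$. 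Multiplying over all $p$ produces odd positive integers $\alpha,\gamma,k_1,k_2$ with $\gcd(k_1,k_2)=1$, $\delta_1=\alpha\gamma k_1$, $\delta_2=\alpha\gamma k_2$, $\delta_3=\alpha^{2}k_1k_2$, and hence $m=\alpha^{2}\gamma k_1k_2$. Since all of $\phi_1/\delta_1,\phi_2/\delta_2,\phi_3/\delta_3$ equal $-1$, the Section~3 convention $\phi_3/\delta_3\neq\phi_4/\delta_4$ holds for whichever of $b_1,b_2,b_3$ we declare to be $b_3$, so relabelling $b_1,b_2,b_3$ so that $b_3$ plays the special role is legitimate. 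Finally I would translate the $\delta_4$‑divisibilities: from $2\delta_4 = \delta_1+\delta_2+\delta_3-1 = \alpha\bigl(\gamma(k_1+k_2)+\alpha k_1k_2\bigr)-1$ we get $\alpha\mid 2\delta_4+1$, so $\gcd(\delta_4,\alpha)=1$; therefore $\delta_4\mid m$ becomes $\delta_4\mid\gamma k_1k_2$, and $\delta_4\mid\delta_i(\delta_i-1)$ become the other three divisibility conditions in the theorem. Moreover $\delta_4\mid\gamma k_1k_2$ forces $\delta_4\le\gamma k_1k_2$, which inserted into $2\delta_4 = \alpha\gamma(k_1+k_2)+\alpha^{2}k_1k_2-1$ gives $k_1k_2(\alpha^{2}-2\gamma)\le 1-\alpha\gamma(k_1+k_2)<0$, i.e.\ $\alpha^{2}<2\gamma$.

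For the ``if'' direction I would run this backwards. Given odd positive integers $\alpha,\gamma,k_1,k_2$ with $\gcd(k_1,k_2)=1$, $\alpha^{2}<2\gamma$ and the four divisibility conditions, set $\delta_1=\alpha\gamma k_1$, $\delta_2=\alpha\gamma k_2$, $\delta_3=\alpha^{2}k_1k_2$, $\delta_4=(\delta_1+\delta_2+\delta_3-1)/2$ and $\phi=(-\delta_1,-\delta_2,-\delta_3,\delta_4)$; one checks this data meets the hypotheses of Corollary~\ref{chartable} (in particular $1+\phi_1+\phi_2+\phi_3+2\phi_4=0$, $m_\phi = n/\sum_i(\phi_i^{2}/\delta_i)=1$, $m_\chi = 2\delta_4>0$, and $\chi_1=\chi_2=\chi_3=0$, $\chi_4=-1$ are integers), so it is the character table of a rank~$6$ RBA $(A,\mathbf{B})$, which is non‑real bipartite with $\ker\phi=\{b_0,b_4,b_5\}$. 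Then the matrix realization of Lemma~\ref{l-phi4-p} applies, its structure‑constant list is the displayed one, and via \eqref{scs1} we read off: non‑negativity reduces to $\delta_i\ge 1$ and $\delta_i\delta_j\ge m$ for $\{i,j\}\subseteq\{1,2,3\}$, all trivially true in the parametrization (e.g.\ $\delta_1\delta_2/\delta_3=\gamma^{2}\ge 1$); and integrality follows from the four divisibility conditions using $\gcd(\delta_4,\alpha)=1$ (derived as before) and the fact that $\delta_4$ is odd (it divides the odd number $\gamma k_1k_2$). Hence $(A,\mathbf{B})$ is an integral table algebra of the required type.

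The main obstacle is the number‑theoretic step of the second paragraph: repackaging the symmetric hypothesis ``$\delta_1\delta_2\delta_3$ a square and each $\delta_i$ divides its square root'' into the asymmetric triple $(\alpha\gamma k_1,\ \alpha\gamma k_2,\ \alpha^{2}k_1k_2)$ with $\gcd(k_1,k_2)=1$, which needs the prime‑by‑prime bookkeeping above. Once that is in hand, the translation of the $\delta_4$‑divisibilities and the appearance of $\alpha^{2}<2\gamma$ both rest on the short but essential observation that $\gcd(\delta_4,\alpha)=1$, and the remaining verifications are routine checks against the structure‑constant list.
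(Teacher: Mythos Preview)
Your proof is correct, and it reaches the same parametrization as the paper, but by a genuinely different route. The paper proceeds \emph{constructively}: it sets $d=\gcd(\delta_1,\delta_2)$, writes $\delta_1=dk_1$, $\delta_2=dk_2$, and then reads off $\delta_3=\alpha^2k_1k_2$ and $\alpha\mid d$ (hence $d=\alpha\gamma$) directly from the individual structure constants $\lambda_{241}$, $\lambda_{152}$, $\lambda_{243}$, without ever isolating your symmetric intermediate conditions. You instead first abstract integrality to the symmetric package ``$m:=\sqrt{\delta_1\delta_2\delta_3}\in\mathbb{Z}$, all $\delta_i$ odd, $\delta_i\mid m$ for $i=1,2,3$, $\delta_4\mid m$, $\delta_4\mid\delta_i(\delta_i-1)$'', and only then break the symmetry via your prime-by-prime valuation argument. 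Your $k_1,k_2,\alpha,\gamma$ in fact coincide with the paper's (one checks $v_p(\alpha)+v_p(\gamma)=\min(a,b)=v_p(d)$), so the two constructions agree. What your approach buys is a cleaner separation between the combinatorics of the structure constants and the arithmetic of the parametrization, and it makes transparent why the asymmetric form $(\alpha\gamma k_1,\alpha\gamma k_2,\alpha^2k_1k_2)$ is forced; what the paper's approach buys is brevity, since it sidesteps the $p$-adic bookkeeping entirely by pulling $\alpha$ and $\gamma$ out of three well-chosen coefficients. Your converse is also somewhat more explicit than the paper's, which simply invokes Theorem~\ref{020616a} after asserting the parameter set is admissible.
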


\begin{proof}  
Suppose $gcd(\delta_1,\delta_2)=d$.  Write $\delta_1 = d k_1$ and $\delta_2=d k_2$.  The formula for the structure constant $\lambda_{241}$ tells us that  
$$ \frac{\delta_1\delta_2 \pm \sqrt{\delta_1 \delta_2 \delta_3}}{2 \delta_1} = \frac{d^2 k_1 k_2 \pm d\sqrt{k_1 k_2 \delta_3}}{2dk_1} = \frac{d k_1 k_2 \pm \sqrt{k_1 k_2 \delta_3}}{2k_1} $$
is a nonnegative integer.  Since $d$, $k_1$, and $k_2$ have to be odd, this implies $k_1$ divides $\sqrt{k_1 k_2 \delta_3}$.  Similarly, the fact that the structure constant $\lambda_{152}$ is a nonnegative integer implies that $k_2$ divides $\sqrt{k_1 k_2 \delta_3}$.  Since $gcd(k_1,k_2)=1$, we have that $\sqrt{k_1 k_2 \delta_3} = \alpha k_1 k_2$ for some $\alpha \in \mathbb{Z}^+$, and therefore $\delta_3 = \alpha^2 k_1 k_2$.  

Since $\lambda_{243}$ is a nonnegative integer, and 
$$ \lambda_{243} = \frac{\delta_2 \delta_3 \pm \sqrt{\delta_1 \delta_2 \delta_3}}{2 \delta_3} = \frac{d\alpha^2k_1 k_2^2 \pm d \alpha k_1 k_2}{2 \alpha^2 k_1 k_2} = \frac{d \alpha k_2 \pm d}{2 \alpha}, $$ 
we have that $\alpha$ divides $d$.  If $d = \alpha \gamma$, then we have $\delta_1= \alpha \gamma k_1$, $\delta_2=\alpha \gamma k_2$, and $\delta_3 = \alpha^2 k_1 k_2$.    

Since we are in the case where $1 - \delta_1 - \delta_2 - \delta_3 + 2\delta_4 = 0$, we have that 
$$ \alpha(\gamma k_1 + \gamma k_2 + \alpha k_1 k_2) - 2 \delta_4 = 1.$$
Therefore, $gcd(\alpha,\delta_4)=1$.  Our formulas for $\lambda_{ii4}$, $i = 1, 2, 3$ imply that $\delta_4$ divides $\delta_i(\delta_i - 1)$ for $i=1,2,3$.   So with the observation that $gcd(\alpha, \delta_4)=1$ we get that $\delta_4$ divides $\gamma k_1(\alpha \gamma k_1 - 1)$, $\gamma k_2 (\alpha \gamma k_2 - 1)$, and $k_1 k_2 (\alpha^2 k_1 k_2 - 1)$. 

From our formulas for $\lambda_{124}$ and $\lambda_{125}$, we see that 
$$\lambda_{124}-\lambda_{125}= \frac{\sqrt{\delta_1\delta_2\delta_3}}{\delta_4} \in \mathbb{Z}^+, $$
so $\delta_4$ divides $\sqrt{\delta_1 \delta_2 \delta_3} = \alpha^2 \gamma k_1 k_2$.   Since $gcd(\alpha, \delta_4)=1$, this shows that $\delta_4$ divides $\gamma k_1 k_2$.   

Let $\beta \in \mathbb{Z}^+$ be such that $\beta \delta_4 = \gamma k_1 k_2$.  Then $2\delta_4 = \delta_1 + \delta_2 + \delta_3 - 1$ implies 
\begin{equation}\label{160512a}
2 \gamma k_1 k_2 = \beta (\alpha \gamma k_1 + \alpha \gamma k_2 +  \alpha^2 k_1 k_2 - 1 ). 
\end{equation}
Since $\alpha \gamma (k_1 + k_2) > 1$, the right hand side of (\ref{160512a}) is strictly larger than $\beta \alpha^2 k_1 k_2$.  Since $\beta \ge 1$, this implies $2 \gamma > \alpha^2$.   

Conversely, for every choice of positive integers $\alpha$, $\gamma$, $k_1$, and $k_2$ satisfying the given  conditions, we can produce an admissible parameter set for which $m_{\phi}=1$ and the structure constants computed using our formulas will be nonnegative integers.  By Theorem \ref{020616a} the set of matrices produced in Section 3 is an integral table basis for an integral table algebra with the desired properties. 
\end{proof} 

It is not difficult to find three odd positive integers satisfying the conditions in Theorem \ref{t-phi1c}.  
The next Lemma covers the case where $\alpha=1$. 
 
\begin{lemma}\label{2ks}
Let $k_1$, $k_2$ be odd positive integers for which $\gamma := \dfrac{k_1 k_2 + 1}{k_1 + k_2}$ is also an odd integer.  If $(k_1+k_2)$ divides $(k_i^2-1)$ for $i=1,2$, then there is an integral table algebra $(A,\mathbf{B})$  satisfying the conditions in Theorem \ref{t-phi1c} with $\delta_1 = \gamma k_1$, $\delta_2 = \gamma k_2$, $\delta_3 = k_1 k_2$, and $\delta_4 = k_1 k_2$. 
\end{lemma}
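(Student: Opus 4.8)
The plan is to obtain $(A,\b)$ directly from the converse (``if'') direction of Theorem~\ref{t-phi1c}, applied with $\alpha := 1$; the whole proof is then the verification that the quadruple $(\alpha,\gamma,k_1,k_2)=(1,\gamma,k_1,k_2)$ meets the hypotheses listed in that theorem. The one identity doing all the work is the defining relation $\gamma(k_1+k_2)=k_1k_2+1$, from which I would immediately extract
$$
\gamma k_1-1 = k_1k_2-\gamma k_2 = k_2(k_1-\gamma), \qquad
\gamma k_2-1 = k_1k_2-\gamma k_1 = k_1(k_2-\gamma).
$$
It is worth remarking that the stated divisibilities $(k_1+k_2)\mid(k_i^2-1)$ are actually forced by ``$\gamma\in\mathbb{Z}$'': reducing $\gamma(k_1+k_2)=k_1k_2+1$ modulo $k_1+k_2$ and using $k_2\equiv-k_1$ yields $k_i^2\equiv1$; these conditions are presumably included only to make candidate pairs $(k_1,k_2)$ easy to locate.

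Next I would check the hypotheses of Theorem~\ref{t-phi1c} in turn. Oddness and positivity of $\alpha=1,\gamma,k_1,k_2$ are given. For $\gcd(k_1,k_2)=1$: a common divisor of $k_1,k_2$ divides $k_1k_2$ and $k_1+k_2$, hence divides $k_1k_2+1$ (because $(k_1+k_2)\mid(k_1k_2+1)$), hence divides $1$. The inequality $\alpha^2<2\gamma$ reads $1<2\gamma$, true since $\gamma\ge1$. For the divisibility conditions, I first note that in the non-real bipartite, $m_\phi=1$ situation $\delta_4$ is not free but is pinned down by $2\delta_4=\delta_1+\delta_2+\delta_3-1$; substituting $\delta_1=\gamma k_1,\delta_2=\gamma k_2,\delta_3=k_1k_2$ and using $\gamma(k_1+k_2)=k_1k_2+1$ gives $2\delta_4=(k_1k_2+1)+k_1k_2-1=2k_1k_2$, so $\delta_4=k_1k_2$ exactly as claimed (and in particular a positive integer). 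Then $\delta_4=k_1k_2$ trivially divides $\gamma k_1k_2$ and $k_1k_2(\alpha^2k_1k_2-1)=k_1k_2(k_1k_2-1)$, while the two identities above give
$$
\gamma k_1(\alpha\gamma k_1-1)=\gamma k_1\cdot k_2(k_1-\gamma)=\gamma(k_1-\gamma)\,k_1k_2, \qquad
\gamma k_2(\alpha\gamma k_2-1)=\gamma(k_2-\gamma)\,k_1k_2,
$$
so $\delta_4$ divides these as well.

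With all hypotheses verified, the converse part of Theorem~\ref{t-phi1c} produces an admissible parameter set with $m_\phi=1$ whose structure constants (computed from the formulas preceding Theorem~\ref{t-phi1c}) are nonnegative integers, and Theorem~\ref{020616a} then supplies matrices $B_0,\dots,B_5$ making $\b$ an integral table basis of a non-real bipartite rank $6$ \ta\ $(A,\b)$ with $\delta_1=\gamma k_1$, $\delta_2=\gamma k_2$, $\delta_3=\delta_4=k_1k_2$, as required. I expect no serious obstacle here: the only points needing a moment's attention are that $\delta_4$ is determined rather than chosen (so the divisibilities must be checked for this specific value), and the clean factorization $\gamma k_i-1=k_j(k_i-\gamma)$, which is what turns those divisibilities into one-line checks.
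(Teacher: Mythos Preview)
Your proposal is correct and follows the same approach as the paper—applying the converse of Theorem~\ref{t-phi1c} with $\alpha=1$—though your verification is considerably more thorough than the paper's one-sentence proof, which only records that the choice of $\gamma$ gives $2\delta_4+1=\delta_1+\delta_2+\delta_3$ and asserts the remaining conditions hold. Your observation that the hypothesis $(k_1+k_2)\mid(k_i^2-1)$ is actually forced by $\gamma\in\mathbb{Z}$ is a nice bonus not noted in the paper.
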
 

\begin{proof} 
The choice of $\gamma$ ensures that $2 \delta_4 + 1 = \delta_1 + \delta_2 + \delta_3$ will hold for $\delta_4 = k_1 k_2$, so the conditions in
Theorem \ref{t-phi1c} are satisfied.
\end{proof} 

\begin{example} \rm
\begin{enumerate}
\item[$(i)$] The conditions in Lemma \ref{2ks} are satisfied when $\delta_1=1$, and $\delta_2 = \delta_3 = \delta_4=k$ for an odd integer $k$.   There is one such association scheme of order $2+4k$ with $m_{\phi} = 1$ for every odd integer $k$.  These are listed as $T_{2+4k} \rtimes C_2$ in Table $1$. 

\item[$(ii)$]
The conditions in Lemma \ref{2ks} are satisfied when $k_1 \equiv 1 \mod 4$ and $k_2 = k_1 +2$.    Therefore, for every $k \equiv 1 \mod 4$ there is a noncommutative rank $6$ integral table algebra with $m_{\phi}=1$ and order $4k^2+8k+2$ that has degrees 
$$
\delta_1 = \frac12(k^2+k), \delta_2 = \frac12(k^2+3k+2), \delta_3 = \delta_4 = k^2+2k. 
$$
The smallest of these appears in Table $1$ with $k_1=5$ and $k_2=7$; it has order $142$.  

\item[$(iii)$]
The conditions in Lemma \ref{2ks} are also satisfied when
$k_1 = \frac{(s+1)^2}{2} - 1$, and $k_2 =  \frac{(s-1)^2}{2}
 - 1$, where $s$ is an odd integer greater than $3$. For $s = 5, 7$, or $9$, the paire
$(k_1, k_2)$ are $(17, 7), (31, 17)$, and $(31, 49)$, respectively.

\item[$(iv)$]
Additional pairs of coprime odd positive integers $(k_1,k_2)$ both less than $50$ that satisfy the conditions of Lemma \ref{2ks} but do not appear in (ii) or (iii) are: $(9,31),(11,19),(11,49),(13,29)$, $(15,41),(19,41)$, $(23,43)$, and $(29,41)$.  We have yet to determine if any of these are realized by association schemes.

\end{enumerate}
\end{example}

\section{Acknowledgements}
The authors would like to express their gratitude to the anonymous referee for his/her valuable  comments that significantly improved the final presentation of this article.

{\footnotesize

\medskip
{\sc A.~Herman, Department of Mathematics and Statistics, University of Regina, Regina, SK S4A 0A2, CANADA. Email:} {\tt Allen.Herman@uregina.ca}

\medskip
{\sc M.~Muzychuk, Department of Computer Science and Mathematics, Netanya Academic College, University St. 1, 42365, Netanya, ISRAEL.  Email:} {\tt muzy@netanya.ac.il}

\medskip
{\sc B.~Xu, Department of Mathematics and Statistics, Eastern Kentucky University, 521 Lancaster Ave., Richmond, KY 40475-3133, USA. Email:} {\tt bangteng.xu@eku.edu} 
}
\end{document}